\title[Some Koll\'ar-Enoki type injectivity and Nadel type vanishing theorems on compact \ka manifolds]
{Some Koll\'ar-Enoki type injectivity and Nadel type vanishing theorems on compact \ka manifolds}
\author{Chunle Huang}
\address{Chunle Huang, Institute of Mathematics, Hunan University, Changsha, 410082, China.}
\email{chunle@zju.edu.cn; 402961544@qq.com}
\newcommand{\Ker}[0]{\operatorname{Ker}}
\newcommand{\Image}[0]{\operatorname{Im}}
\newcommand{\ka}{K\"ahler }
\newcommand{\deldel}{\sqrt{-1}\partial \overline{\partial}}
\newcommand{\dbar}{\overline{\partial}}
\newcommand{\e}{\varepsilon}
\newcommand{\I}[1]{\mathcal{I}(#1)}
\newcommand{\lla}[0]{{\langle\!\hspace{0.02cm} \!\langle}}
\newcommand{\rra}[0]{{\rangle\!\hspace{0.02cm}\!\rangle}}
\newtheorem{thm}{Theorem}[section]
\newtheorem{lem}[thm]{Lemma}
\newtheorem{cor}[thm]{Corollary}
\theoremstyle{definition}
\newtheorem{defn}[thm]{Definition}
\newtheorem*{ack}{Acknowledgments}
\let\uppercasenonmath\@gobble
\begin{document}
\bibliographystyle{amsalpha+}
\maketitle
\begin{abstract}
In this paper we will first show some Koll\'ar-Enoki type injectivity theorems on compact \ka manifolds, by using the Hodge theory, the Bochner-Kodaira-Nakano identity and the analytic method provided by O. Fujino and S. Matsumura in \cite{fujino-osaka,FM,matsumura4,matsumura7}. We have some straightforward corollaries. In particular, we will show that our main injectivity theorem implies several Nadel type vanishing theorems on smooth projective manifolds. Second, by applying the transcendental method, especially the Demailly-Peternell-Schneider equisingular approximation theorem and the H\"{o}rmander $L^2$ estimates, we will prove some Nakano-Demailly type and Nadel type vanishing theorems for holomorphic vector bundles on  compact \ka manifolds, twisted by pseudo-effective line bundles and multiplier ideal sheaves. As applications, we will show that our first main vanishing theorem generalizes the classical Nakano-Demailly vanishing theorem while the second one contains the famous Nadel vanishing theorem as a special case. 
\end{abstract}
\tableofcontents
\section{Introduction}\label{f-sec1}
The subject of cohomology vanishing theorems for holomorphic vector
bundles on complex manifolds occupies a role of central importance in
several complex variables and algebraic geometry (cf. \cite{EV86,esnault-viehweg,fujino-kodaira,fujino-vani-semi,fujino-amc,Kawamata}). Among various vanishing theorems 
the Kodaira vanishing theorem \cite{kodaira} 
is one of the most celebrated results 
in complex geometry and his original proof is based on his theory of 
harmonic integrals on compact K\"ahler manifolds.
The injectivity theorem as one of the most important generalizations of the Kodaira vanishing theorem plays an important role when we study fundamental problems in higher dimensional algebraic geometry (cf. \cite{fujino-pja,fujino-injectivity,fujino-zucker,matsumura1,matsumura2,matsumura6}). In particular, 
Koll\'ar obtained in \cite{kollar-higher1} his famous injectivity theorem, which is one of the most important generalizations of the Kodaira 
vanishing theorem for smooth complex projective varieties. 
After Koll\'ar's important work, 
Enoki recovered and 
generalized Koll\'ar's injectivity theorem in \cite{enoki} as an easy application of 
the theory of harmonic integrals on compact K\"ahler manifolds. 
Recently, O. Fujino and S. Matsumura in \cite{fujino-osaka,fujino-crelle,FM,matsumura3,matsumura4,matsumura5,matsumura7} have
obtained a series of important injectivity theorems on compact \ka manifolds formulated by singular hermitian metrics and multiplier ideal sheaves by using the transcendental method 
based on the theory of harmonic integrals on complete noncompact  K\"ahler manifolds. As is well known, the transcendental method often provides us some 
very powerful tools not only in complex geometry but also in algebraic geometry (cf. \cite{dhp,gongyo-matsumura,ohsawa-vanishing,ohsawa,ohsawa-takegoshi,paun,Siu98,Siu02}). Thus it is natural and of interest to 
study various vanishing theorems, injectivity theorems and other related topics 
by using the transcendental method. 
For a comprehensive and further description about this method, we recommend the reader to see the papers \cite{demailly,demailly-note,fujino-pja,fujino-funda,fujino-vanishing,fujino-injectivity,fujino-foundation,fujino-kollar-type} and also the references therein.

In this paper, we consider at first some Koll\'ar-Enoki type injectivity theorems on compact \ka manifolds by using the Hodge theory and the Bochner-Kodaira-Nakano identity on compact \ka manifolds and the analytic method provided by O. Fujino and S. Matsumura in \cite{fujino-osaka,FM,matsumura4,matsumura7}. Our first main result is the following Theorem \ref{chunle} 
which contains the famous Enoki injectivity theorem as a special case. 

\begin{thm}\label{chunle}
Let $L$ be a semi-positive holomorphic line bundle over a compact K\"ahler manifold $X$ with a smooth hermitian metric $h_L$ satisfying $\sqrt{-1}\Theta_{h_L} (L) \geq 0$. If $F$ $($resp.~$E$$)$ is a holomorphic line $($resp.~vector$)$ bundle over $X$ with a smooth hermitian 
metric $h_F$ $($resp.~ $h_E$$)$ such that 
\begin{enumerate}
\item $\sqrt{-1}\Theta_{h_F}(F) - a \sqrt{-1}\Theta_{h_L} (L) \geq 0$
\item $\sqrt{-1}\Theta_{h_E}(E)+(a-b)Id_E\otimes\sqrt{-1}\Theta_{h_L}(L)\geq_{Nak}0$\label{eq7} \,\,\text{in the sense of Nakano} \end{enumerate}
for some positive constants $a, b>0$, then for a nonzero section $s \in H^{0}(X, L)$ the multiplication map induced by $\otimes s$ 
\begin{equation*}
\times s:  
H^{q}(X, K_{X}\otimes E \otimes F ) 
\xrightarrow{} 
H^{q}(X, K_{X} \otimes E\otimes F \otimes L )
\end{equation*}
is injective for every $q\geq0$, where $K_X$ is the canonical line bundle of $X$. 
\end{thm}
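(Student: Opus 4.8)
The plan is to run Enoki's harmonic-theoretic argument, adapted to the present curvature hypotheses. Fix a \ka form $\omega$ on $X$ with $n=\dim X$ and equip $E\otimes F$ and $E\otimes F\otimes L$ with the tensor metrics $h_E\otimes h_F$ and $h_E\otimes h_F\otimes h_L$. By Hodge theory on the compact \ka manifold $X$, every class in $H^q(X,K_X\otimes E\otimes F)$ is represented by a unique $\Delta_{\dbar}$-harmonic $(n,q)$-form $u$ with values in $E\otimes F$, and it suffices to show that if $su$ is $\dbar$-exact then $u\equiv 0$. The first step is the curvature bookkeeping: since $F$ is a line bundle, $\sqrt{-1}\Theta_{h_E\otimes h_F}(E\otimes F)=\sqrt{-1}\Theta_{h_E}(E)+Id_E\otimes\sqrt{-1}\Theta_{h_F}(F)$, and combining hypotheses (1) and (2) yields
\[
\sqrt{-1}\Theta_{h_E\otimes h_F}(E\otimes F)\geq_{Nak} b\,Id_E\otimes\sqrt{-1}\Theta_{h_L}(L)\geq_{Nak}0,
\]
so $E\otimes F$, and a fortiori $E\otimes F\otimes L$, is Nakano semi-positive.

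Next I would extract the consequences of harmonicity. Applying the Bochner-Kodaira-Nakano identity $\Delta_{\dbar}=\Delta_{D'}+[\sqrt{-1}\Theta_{h_E\otimes h_F}(E\otimes F),\Lambda]$ to $u$ and integrating over $X$ gives $\|D'u\|^2+\|(D')^*u\|^2+\lla[\sqrt{-1}\Theta_{h_E\otimes h_F}(E\otimes F),\Lambda]u,u\rra=0$. By the Nakano semi-positivity above each term is non-negative, so all three vanish; in particular $D'u=0$, $(D')^*u=0$, and the curvature integrand vanishes pointwise. Using the Nakano comparison and the fact that on $(n,q)$-forms $[\sqrt{-1}\Theta_{h_L}(L),\Lambda]$ is a non-negative pointwise operator with $[\sqrt{-1}\Theta_{h_L}(L),\Lambda]u=\sqrt{-1}\Theta_{h_L}(L)\wedge\Lambda u$ (the term $\sqrt{-1}\Theta_{h_L}(L)\wedge u$ being of bidegree $(n+1,q+1)$, hence zero), I obtain the pointwise identity $\sqrt{-1}\Theta_{h_L}(L)\wedge\Lambda u=0$. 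The \ka identities $(D')^*=\sqrt{-1}[\Lambda,\dbar]$ and $[\Lambda,D']=\sqrt{-1}\dbar^*$ then give $\dbar(\Lambda u)=0$ and $D'(\Lambda u)=0$.

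The central step is to prove that $su$ is again harmonic for the twisted metric. Clearly $\dbar(su)=s\,\dbar u=0$. Writing $D'_{h_L}s$ for the $(1,0)$-part of the Chern connection of $(L,h_L)$ applied to the holomorphic section $s$, the \ka identity $\dbar^*=-\sqrt{-1}[\Lambda,D']$ together with $D'(su)=(D'_{h_L}s)\wedge u=0$ (bidegree $(n+1,q)$) and $D'(\Lambda u)=0$ yields $\dbar^*(su)=\sqrt{-1}(D'_{h_L}s)\wedge\Lambda u$. Differentiating and using the curvature relation $\dbar(D'_{h_L}s)=\Theta_{h_L}(L)\,s$ and $\dbar(\Lambda u)=0$, I get $\dbar\dbar^*(su)=s\cdot\sqrt{-1}\Theta_{h_L}(L)\wedge\Lambda u=0$ by the pointwise vanishing obtained above. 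Hence $\dbar^*(su)$ is $\dbar$-closed and trivially $\dbar^*$-closed, so it is harmonic; being $\dbar^*$-exact it is orthogonal to the harmonic forms, whence $\dbar^*(su)=0$ and $su$ is harmonic.

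Finally, if $\times s([u])=[su]=0$ then its harmonic representative $su$ must vanish identically; since $s$ is a nonzero holomorphic section it is nonvanishing on a dense open set, so $u\equiv0$ there and hence everywhere by continuity, giving $[u]=0$. This proves injectivity for every $q\geq 0$. I expect the main obstacle to be the central step, the harmonicity of $su$: the direct computation only gives $\dbar^*(su)=\sqrt{-1}(D'_{h_L}s)\wedge\Lambda u$, which is not manifestly zero, and the crux is to convert the pointwise curvature degeneracy $\sqrt{-1}\Theta_{h_L}(L)\wedge\Lambda u=0$—obtained from Nakano semi-positivity together with the quantitative domination by $b\,\sqrt{-1}\Theta_{h_L}(L)$—into the cancellation that forces $\dbar^*(su)$, being simultaneously harmonic and $\dbar^*$-exact, to vanish.
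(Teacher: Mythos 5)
Your proof is correct, and it shares the paper's overall skeleton: Hodge-theoretic reduction to harmonic representatives, the curvature bookkeeping $\sqrt{-1}\Theta_{h_E\otimes h_F}(E\otimes F)\geq_{Nak} b\,Id_E\otimes\sqrt{-1}\Theta_{h_L}(L)\geq_{Nak}0$, and a first application of the Bochner--Kodaira--Nakano identity to $u$ yielding $(D')^{*}u=0$ and pointwise vanishing of the curvature term. Where you genuinely diverge is the key step, the proof that $\overline{\partial}^{*}(su)=0$. The paper applies the Nakano identity a \emph{second} time, now to $su$, first checking $\nabla'^{*}(su)=s\nabla'^{*}u=0$ via the Hodge star, and then uses the reverse comparison $\sqrt{-1}\Theta(E\otimes F\otimes L)\leq_{Nak}(1+\tfrac{1}{b})\sqrt{-1}\Theta(E\otimes F)$ to bound the curvature term of $su$ pointwise by $(1+\tfrac{1}{b})|s|^{2}_{h_L}\langle\sqrt{-1}\Theta(E\otimes F)\Lambda_\omega u,u\rangle=0$, which forces $\|\overline{\partial}^{*}(su)\|^{2}\leq 0$; no explicit formula for $\overline{\partial}^{*}(su)$ is ever needed. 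You instead sharpen the pointwise vanishing into the degeneracy $\sqrt{-1}\Theta_{h_L}(L)\wedge\Lambda u=0$ (using the same domination by $b\,Id_E\otimes\sqrt{-1}\Theta_{h_L}(L)$ together with the fact that a semi-positive Hermitian operator with vanishing expectation annihilates the vector), compute $\overline{\partial}^{*}(su)=\sqrt{-1}(D'_{h_L}s)\wedge\Lambda u$ via the K\"ahler identities, show it is $\overline{\partial}$-closed, and kill it because it is simultaneously harmonic and in $\operatorname{Im}\overline{\partial}^{*}$ --- this is Enoki's original pointwise computation rather than the Fujino--Matsumura-style norm estimate. Both uses of the constant $b$ play the same role; the trade-off is that your route is more elementary and extracts finer pointwise information, whereas the paper's norm-estimate version is precisely the form of the argument that survives the passage to singular metrics and the complete non-compact manifolds $Y_{\varepsilon}$ in Theorem \ref{f-thm5.1}, where pointwise identities of your type are much harder to control. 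Two minor streamlinings: your final orthogonality step can be shortened to $\|\overline{\partial}^{*}(su)\|^{2}=\lla su,\overline{\partial}\,\overline{\partial}^{*}(su)\rra=0$ on the compact manifold $X$; and $D'u=0$ holds trivially for bidegree reasons (it is an $(n+1,q)$-form), so it need not be extracted from the identity.
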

Although the assumptions in Theorem \ref{chunle} may look a little bit artificial it is very useful and has some interesting applications. For instance, by applying Theorem \ref{chunle} we obtain the following Corollary \ref{en} and Corollary \ref{en_}. Corollary \ref{en} is just the original Enoki injectivity theorem. Corollary \ref{en_} generalizes the Enoki injectivity theorem to the case twisted by Nakano semi-positive vector bundles.

\begin{cor}[Enoki injectivity theorem cf. \cite{enoki,fujino-zucker,FM,matsumura4}]\label{en}
Let $L$ be a semi-positive line bundle over a compact K\"ahler manifold $X$. Then for a nonzero section $s \in H^{0}(X, L^l)$ 
the multiplication map induced by $\otimes s$
\begin{equation}
  \times s: H^q(X,K_X\otimes L^k)\rightarrow H^q(X,K_X\otimes L^{l+k})\nonumber
\end{equation}
is injective for any $k,l\geq1$ and $q\geq0$.
\end{cor}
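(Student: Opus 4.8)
The plan is to deduce Corollary \ref{en} directly from Theorem \ref{chunle} by making judicious choices of the auxiliary bundles and the positive constants. Since $L$ is semi-positive, I first fix a smooth hermitian metric $h_L$ on $L$ with $\sqrt{-1}\Theta_{h_L}(L) \geq 0$. Because the section in the corollary lies in $H^0(X, L^l)$, I would apply Theorem \ref{chunle} with the semi-positive line bundle there taken to be $L^l$, equipped with the induced metric $h_L^{\otimes l}$; its curvature is $l\,\sqrt{-1}\Theta_{h_L}(L) \geq 0$, as required, and the nonzero section $s \in H^0(X, L^l)$ of the corollary is exactly the section $s$ of the theorem.

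Next I would set $F = L^k$ with the induced metric $h_F = h_L^{\otimes k}$, and take $E = \mathcal{O}_X$ to be the trivial line bundle with the flat metric, so that $\sqrt{-1}\Theta_{h_E}(E) = 0$. With these choices one has $K_X \otimes E \otimes F = K_X \otimes L^k$ and $K_X \otimes E \otimes F \otimes L^l = K_X \otimes L^{l+k}$, so that the multiplication map appearing in Theorem \ref{chunle} becomes precisely the map $\times s \colon H^q(X, K_X \otimes L^k) \to H^q(X, K_X \otimes L^{l+k})$ of the corollary.

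It then remains only to check the two curvature hypotheses. Choosing the positive constants $a = b = k/l$, which are positive since $k, l \geq 1$, hypothesis (1) reads $\sqrt{-1}\Theta_{h_F}(F) - a\,\sqrt{-1}\Theta_{h_L^{\otimes l}}(L^l) = (k - al)\,\sqrt{-1}\Theta_{h_L}(L) = 0 \geq 0$, while hypothesis (2) reads $\sqrt{-1}\Theta_{h_E}(E) + (a-b)\,\mathrm{Id}_E \otimes \sqrt{-1}\Theta_{h_L^{\otimes l}}(L^l) = 0 \geq_{Nak} 0$; here I use that for the line bundle $E = \mathcal{O}_X$ Nakano semi-positivity coincides with ordinary semi-positivity. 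Applying Theorem \ref{chunle} then yields the injectivity of $\times s$ for every $q \geq 0$.

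As this argument is a pure specialization, there is essentially no genuine obstacle; the only point demanding a little care is the curvature bookkeeping, in particular the observation that the constant $a$ must be rescaled by the factor $l$ (via $a = k/l$) so that condition (1) holds as an equality. The fact that $a$ and $b$ are allowed to be arbitrary positive reals rather than integers is exactly what makes the single choice $a = b = k/l$ succeed uniformly for all $k, l \geq 1$.
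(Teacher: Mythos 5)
Your proposal is correct and follows essentially the same route as the paper: both specialize Theorem \ref{chunle} by taking $E$ trivial, $F = L^k$, and the theorem's semi-positive line bundle to be $L^l$, with $a=b$ so that hypothesis (2) becomes trivial (the paper takes $a=b$ small, you take $a=b=k/l$; either choice verifies hypothesis (1)). Your version merely spells out the curvature bookkeeping that the paper leaves as ``easy to check.''
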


\begin{cor}\label{en_}
Let $L$ $($resp.~ $E$$)$ be a semi-positive line bundle $($resp.~ a Nakano semi-positive vector bundle$)$ over a compact K\"ahler manifold $X$. Then for a nonzero section $s \in H^{0}(X, L^l)$ 
the multiplication map induced by $\otimes s$
\begin{equation}
  \times s: H^q(X,K_X\otimes E\otimes L^k)\rightarrow H^q(X,K_X\otimes E \otimes L^{l+k})\nonumber
\end{equation}
is injective for any $k,l\geq1$ and $q\geq0$.
\end{cor}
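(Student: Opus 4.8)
The plan is to derive Corollary \ref{en_} as a direct application of Theorem \ref{chunle}, by relabelling the bundles so that the section $s \in H^0(X, L^l)$ plays the role of the distinguished semi-positive line bundle in the theorem. Since $L$ is semi-positive there is a smooth metric $h_L$ with $\sqrt{-1}\Theta_{h_L}(L) \geq 0$, and since $E$ is Nakano semi-positive there is a smooth metric $h_E$ with $\sqrt{-1}\Theta_{h_E}(E) \geq_{Nak} 0$. First I would take the semi-positive line bundle of Theorem \ref{chunle} to be $L^l$ equipped with the metric $h_L^l$, so that $\sqrt{-1}\Theta_{h_L^l}(L^l) = l\,\sqrt{-1}\Theta_{h_L}(L) \geq 0$ and the section $s$ of the corollary lives in $H^0(X, L^l)$ exactly as required. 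To match the source and target cohomology groups, I would set $F = L^k$ with metric $h_F = h_L^k$ and leave $E$ with its metric $h_E$ unchanged; then $K_X \otimes E \otimes F = K_X \otimes E \otimes L^k$ and $K_X \otimes E \otimes F \otimes L^l = K_X \otimes E \otimes L^{l+k}$, so the map $\times s$ supplied by Theorem \ref{chunle} is precisely the one in the corollary.

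It remains to choose the positive constants $a, b$ so that the two curvature hypotheses of Theorem \ref{chunle} hold simultaneously, and I would simply take $a = b = k/l$. With the identifications above, hypothesis (1) becomes
\begin{equation*}
\sqrt{-1}\Theta_{h_F}(F) - a\,\sqrt{-1}\Theta_{h_L^l}(L^l) = \big(k - a l\big)\,\sqrt{-1}\Theta_{h_L}(L) = 0 \geq 0,
\end{equation*}
so it is satisfied with equality, while hypothesis (2) reads
\begin{equation*}
\sqrt{-1}\Theta_{h_E}(E) + (a-b)\,\mathrm{Id}_E \otimes \sqrt{-1}\Theta_{h_L^l}(L^l) = \sqrt{-1}\Theta_{h_E}(E) \geq_{Nak} 0,
\end{equation*}
which is exactly the Nakano semi-positivity of $E$. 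Thus both conditions hold and Theorem \ref{chunle} applies to give the injectivity of $\times s$ for every $q \geq 0$.

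There is no genuine analytic difficulty here, since all the work is absorbed into Theorem \ref{chunle}; the only point requiring care is the compatibility of the two numerical constraints, as condition (1) forces $a \leq k/l$ whereas condition (2) is cleanest when $a \geq b$. More generally one could pick any $a = b > 0$ with $a l \leq k$, or allow $a > b$ and use that adding the Nakano semi-positive term $(a-b)l\,\mathrm{Id}_E \otimes \sqrt{-1}\Theta_{h_L}(L)$ preserves Nakano semi-positivity; the symmetric choice $a = b = k/l$ avoids any such discussion. Finally, I note that Corollary \ref{en} is recovered as the further special case in which $E$ is the trivial line bundle with the flat metric, so that $\sqrt{-1}\Theta_{h_E}(E) = 0 \geq_{Nak} 0$ holds automatically.
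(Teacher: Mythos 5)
Your proposal is correct and follows essentially the same route as the paper: the paper likewise applies Theorem \ref{chunle} with $F=L^k$, the distinguished semi-positive bundle taken to be $L^l$, and equal constants $a=b$ (the paper says ``small positive constants with $a=b$'', which amounts to $a=b\leq k/l$, while you make the explicit choice $a=b=k/l$). Your verification of conditions (1) and (2) is exactly the check the paper leaves implicit, so the two arguments coincide.
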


Motivated by the profound work obtained    
by O. Fujino and S. Matsumura in a series of papers (cf. \cite{fujino-osaka,FM,matsumura4,matsumura7}) we can generalize Theorem \ref{chunle} to the case formulated by singular hermitian metrics and multiplier ideal sheaves as follows.

\begin{thm}\label{f-thm5.1}
Let $L$ be a semi-positive holomorphic line bundle over a compact K\"ahler manifold $X$ with a smooth hermitian metric $h_L$ satisfying $\sqrt{-1}\Theta_{h_L} (L) \geq 0$.
If $F$ $($resp.~$E$$)$ is a holomorphic line $($resp.~vector$)$ bundle over $X$ with a singular hermitian 
metric $h$ $($resp.~a smooth hermitian metric $h_E$$)$ such that 
\begin{enumerate}
\item $\sqrt{-1}\Theta_{h}(F) - a \sqrt{-1}\Theta_{h_L} (L) \geq 0$ \,\,\text{in the sense of currents} 
\item $\sqrt{-1}\Theta_{h_E}(E)+(a-b)Id_E\otimes\sqrt{-1}\Theta_{h_L}(L)\geq_{Nak}0$\label{eq7} \,\,\text{in the sense of Nakano} \end{enumerate}
for some positive constants $a, b>0$, then for a nonzero section $s \in H^{0}(X, L)$ the multiplication map induced by $\otimes s$ 
\begin{equation*}
\times s:  
H^{q}(X, K_{X}\otimes E \otimes F \otimes \mathcal I(h)) 
\xrightarrow{} 
H^{q}(X, K_{X} \otimes E\otimes F \otimes \mathcal I(h) \otimes L )
\end{equation*}
is injective for every $q\geq0$, where 
$\mathcal I(h)$ is the multiplier ideal sheaf of $h$. 
\end{thm}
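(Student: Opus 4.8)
The plan is to reduce Theorem~\ref{f-thm5.1} to the smooth case already established in Theorem~\ref{chunle} by approximating the singular metric $h$ on $F$ by a family of metrics with analytic singularities and then passing to the limit. The essential tool is the Demailly--Peternell--Schneider equisingular approximation theorem, which I would invoke to produce a decreasing family of singular hermitian metrics $\{h_\e\}_{\e>0}$ on $F$ such that each $h_\e$ is smooth on a common Zariski open set $Y:=X\setminus Z$ (with $Z$ an analytic subset), the multiplier ideal sheaves satisfy $\mathcal I(h_\e)=\mathcal I(h)$ for all $\e$, and the curvature currents obey a uniform lower bound $\sqrt{-1}\Theta_{h_\e}(F)\geq a\,\sqrt{-1}\Theta_{h_L}(L)-\e\,\omega$ for a fixed K\"ahler form $\omega$ on $X$. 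The equisingularity $\mathcal I(h_\e)=\mathcal I(h)$ is exactly the feature that keeps the cohomology groups in the statement under uniform control along the approximation.

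Next I would install the $L^2$-harmonic framework on the complete K\"ahler manifold $(Y,\ome)$, where $\ome$ is a complete K\"ahler metric obtained by adding to $\omega$ a Poincar\'e-type contribution that blows up along $Z$. The goal is to identify the cohomology group $H^{q}(X,K_{X}\otimes E\otimes F\otimes\mathcal I(h))$ with the space of $L^2$-harmonic $(n,q)$-forms on $Y$ with values in $E\otimes F$, measured with respect to $h_E$ and $h$ (equivalently $h_\e$, by equisingularity). This $L^2$-Dolbeault isomorphism, combined with the H\"ormander $L^2$-estimate for $\dbar$ on complete K\"ahler manifolds, is what replaces the elementary Hodge theory that is available in the compact smooth setting of Theorem~\ref{chunle}; note in particular that the $h_\e$ are only smooth on $Y$ and not on all of $X$, so a direct appeal to Theorem~\ref{chunle} is not available and the $L^2$-theory on $Y$ is genuinely needed.

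With these identifications in hand, the injectivity argument proceeds as in the smooth case. Given a class $\alpha$ in the source cohomology annihilated by $\times s$, I would represent $\alpha$ by an $L^2$-harmonic form $u$ and show that $su$ is again harmonic. Since $s$ is holomorphic, $\dbar(su)=s\,\dbar u=0$ is immediate, so the content lies in the vanishing $\dbar^{*}(su)=0$. Here the Bochner--Kodaira--Nakano identity is decisive: conditions $(1)$ and $(2)$ combine to show that the curvature of $E\otimes F$ dominates $b$ times the semi-positive curvature of $L$, so that $E\otimes F$ is Nakano semi-positive on $Y$ and the relevant curvature term $\langle[\sqrt{-1}\Theta,\Lambda_{\ome}]\,u,u\rangle$ is nonnegative. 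The harmonicity of $u$ then forces this term to vanish, and together with the semi-positivity of $L$ this yields $\dbar^{*}(su)=0$. Consequently $su$ is harmonic and $\dbar$-exact, hence $su\equiv0$; since $s\not\equiv0$ this forces $u\equiv0$ on the dense open set $Y$, and therefore $\alpha=0$.

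The main obstacle, and the step demanding the greatest care, is the passage to the limit $\e\to0$, since the $h_\e$ converge only to the genuinely singular metric $h$ and each satisfies the curvature hypothesis only up to the error $-\e\,\omega$. To handle this I would first run the harmonic argument for each fixed $\e$, where the curvature is smooth on $Y$ and the hypotheses hold up to the $\e$-error, obtaining uniform $L^2$-bounds on the harmonic representatives; the equisingularity $\mathcal I(h_\e)=\mathcal I(h)$ keeps all the forms in one common $L^2$-space independent of $\e$. One then extracts a weakly convergent subsequence and verifies that the limit is harmonic for the metric $h$, represents the original class, and still satisfies $\dbar^{*}(su)=0$, so that the conclusion $u\equiv0$ survives. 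Establishing these uniform estimates together with the compactness needed to take the limit is the technical heart of the proof and is carried out along the lines of the analytic method of Fujino and Matsumura.
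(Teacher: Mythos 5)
Your overall strategy (Demailly--Peternell--Schneider equisingular approximation, $L^2$-theory on complete K\"ahler Zariski-open sets, Bochner--Kodaira--Nakano identity, then a limit in $\e$) is the same family of argument as the paper's, which follows Fujino--Matsumura. But two of your structural assumptions are not available, and the third --- the limiting step, which you yourself identify as the heart --- is replaced by a step that does not make sense. First, Lemma \ref{equi.} does \emph{not} produce a common smooth locus: each $h_{\e}$ is smooth only on $Y_{\e}=X\setminus Z_{\e}$ with $Z_{\e}$ depending on $\e$. This is precisely why the paper must work with the two-parameter family of complete metrics $\omega_{\e,\delta}=\omega+\delta\omega_{\e}$ on the varying $Y_{\e}$, with Lemma \ref{f-lem5.12} controlling constants as $\delta\to 0$, and with Cantor's diagonal argument; a single fixed $(Y,\ome)$ is not available. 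Second, equisingularity $\I{h_{\e}}=\I{h}$ identifies multiplier ideal sheaves, hence the cohomology groups, but \emph{not} the $L^2$-spaces of forms: the norms $\|\cdot\|_{h_Eh_{\e},\omega_{\e,\delta}}$ genuinely differ from $\|\cdot\|_{h_Eh,\omega}$, and what the paper actually uses is only the monotonicity \eqref{eq5.1}, which makes the norms of the fixed representative $u$ uniformly bounded.

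The decisive gap is your treatment of the limit $\e\to 0$. At fixed $\e$ the curvature bound carries the error $-\e\omega$, so the harmonic representative $u_{\e,\delta}$ only satisfies an estimate of the form $\|\dbar^{*}_{\e,\delta}(su_{\e,\delta})\|^{2}_{\e,\delta}\leq C\,\e\,\|u\|^{2}$; one never gets $\dbar^{*}(su_{\e,\delta})=0$, so "harmonic plus exact implies zero" is unavailable at any fixed level --- as you note. Your remedy, extracting a weak limit and verifying that it is "harmonic for the metric $h$" and still satisfies $\dbar^{*}(su)=0$, is not a meaningful step: $h$ is genuinely singular, there is no Laplacian or harmonic space attached to it, and this is exactly the difficulty the Fujino--Matsumura method is engineered to avoid. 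The paper never forms a limit of harmonic forms. Instead it (i) proves the asymptotic Bochner estimate $\lim_{\e\to 0}\varlimsup_{\delta\to 0}\|\dbar^{*}_{\e,\delta}(su_{\e,\delta})\|_{\e,\delta}=0$ (Lemma \ref{f-prop5.8}); (ii) uses the hypothesis $s\eta=0$ to solve $\dbar v_{\e,\delta}=su_{\e,\delta}$ with norms bounded uniformly in $\e$ (Lemma \ref{f-prop5.10}); (iii) combines these via the duality $\|su_{\e,\delta}\|^{2}_{\e,\delta}=\lla \dbar^{*}_{\e,\delta}su_{\e,\delta},v_{\e,\delta}\rra_{\e,\delta}$ to get $\lim_{\e\to 0}\varlimsup_{\delta\to 0}\|su_{\e,\delta}\|_{\e,\delta}=0$ (Lemma \ref{f-prop5.11}); and (iv) invokes the nontrivial Lemma \ref{f-prop5.7} to convert this asymptotic vanishing of norms, through the $(\e,\delta)$-dependent De Rham--Weil isomorphisms, into vanishing of the fixed class $\eta$. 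Step (iv) is what your sketch is missing: weak convergence of the harmonic projections to zero does not by itself show that $u$ is $\dbar$-exact in the relevant space, since the image of $\dbar$ need not be closed and the identification of cohomology classes changes with $\e$ and $\delta$. Without a statement of the type of Lemma \ref{f-prop5.7} (or an argument replacing it), the proof does not close.
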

Here we remark that Theorem \ref{f-thm5.1} has many straightforward applications. For instance, by applying Theorem \ref{f-thm5.1} we have the following Corollary \ref{f-m} and  Corollary \ref{f-m2}. Corollary \ref{f-m} is the main injectivity theorem in \cite{FM} and Corollary \ref{f-m2} is the Theorem 6.6 in \cite{FM}.

\begin{cor}[Theorem A in \cite{FM}]\label{f-m}
Let $L$ be a semi-positive holomorphic line bundle over a compact K\"ahler manifold $X$ with a smooth hermitian metric $h_L$ satisfying $\sqrt{-1}\Theta_{h_L} (L) \geq 0$.
If $F$ is a holomorphic line bundle over $X$ with a singular hermitian 
metric $h$ such that $\sqrt{-1}\Theta_{h}(F) - a \sqrt{-1}\Theta_{h_L} (L) \geq 0$ in the sense of currents for some positive constants $a>0$, then for a nonzero section $s \in H^{0}(X, L)$ the multiplication map induced by $\otimes s$ 
\begin{equation*}
\times s: 
H^{q}(X, K_{X}\otimes F \otimes \mathcal I(h)) 
\xrightarrow{}
H^{q}(X, K_{X}\otimes F \otimes \mathcal I(h) \otimes L )
\end{equation*}
is injective for every $q\geq0$.  
\end{cor}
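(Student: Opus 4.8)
The plan is to deduce Corollary \ref{f-m} directly from Theorem \ref{f-thm5.1} by specializing the auxiliary vector bundle $E$ to the trivial line bundle. Concretely, I would take $E = \mathcal{O}_X$ equipped with the trivial (flat) hermitian metric $h_E$, so that $\sqrt{-1}\Theta_{h_E}(E) = 0$ and the identifications $K_X \otimes E \otimes F \otimes \mathcal{I}(h) = K_X \otimes F \otimes \mathcal{I}(h)$ and $K_X \otimes E \otimes F \otimes \mathcal{I}(h) \otimes L = K_X \otimes F \otimes \mathcal{I}(h) \otimes L$ hold canonically. Under these identifications the multiplication map $\times s$ appearing in Theorem \ref{f-thm5.1} becomes, letter for letter, the map in the statement of Corollary \ref{f-m}.

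Next I would verify that the two curvature hypotheses of Theorem \ref{f-thm5.1} are met for this choice of data. Hypothesis (1), namely $\sqrt{-1}\Theta_{h}(F) - a\sqrt{-1}\Theta_{h_L}(L) \geq 0$ in the sense of currents, is exactly the assumption of Corollary \ref{f-m}, so there is nothing to check there. For hypothesis (2) I would exploit the freedom in the auxiliary constant $b$ and simply set $b = a$. Since $E$ is trivial and flat, $\sqrt{-1}\Theta_{h_E}(E) = 0$, and hence the left-hand side of (2) collapses to $(a-b)\,Id_E \otimes \sqrt{-1}\Theta_{h_L}(L) = 0$, which is trivially Nakano semi-positive.

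There is essentially no genuine obstacle, as Corollary \ref{f-m} is precisely the special case $E = \mathcal{O}_X$ of Theorem \ref{f-thm5.1}; the only points requiring minimal care are bookkeeping ones. First, the theorem requires both constants to be strictly positive, which is why I take $b = a > 0$ rather than the naive $b = 0$. Second, because $E$ has rank one, Nakano semi-positivity coincides with the positivity of the curvature $(1,1)$-form, so the semi-positivity $\sqrt{-1}\Theta_{h_L}(L) \geq 0$ guarantees that even if one preferred $b < a$, the term $(a-b)\sqrt{-1}\Theta_{h_L}(L)$ would remain Nakano semi-positive. With the choices $E = \mathcal{O}_X$, $h_E$ trivial, and $b = a$, Theorem \ref{f-thm5.1} applies verbatim and yields the injectivity of $\times s$ for every $q \geq 0$.
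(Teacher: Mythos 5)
Your proposal is correct and is essentially identical to the paper's own proof: the paper likewise specializes Theorem \ref{f-thm5.1} to $E=\mathcal{O}_X$ (the trivial line bundle) and takes $b=a$ so that hypothesis (2) holds trivially. Your additional remarks on why $b$ must be strictly positive and on the rank-one interpretation of Nakano semi-positivity are accurate but not needed beyond what the paper records.
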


\begin{cor}[Theorem 6.6 in \cite{FM}]\label{f-m2}
Let $L$ be a semi-positive holomorphic line bundle over a compact K\"ahler manifold $(X,\omega)$ equipped with a smooth hermitian metric $h_L$ satisfying $\sqrt{-1}\Theta_{h_L} (L) \geq 0$ and $E$ a Nakano semi-positive vector bundle over $X$.
If $F$ is a holomorphic line bundle over $X$ with a singular hermitian 
metric $h$ such that $\sqrt{-1}\Theta_{h}(F) - a \sqrt{-1}\Theta_{h_L} (L) \geq 0$ for some positive constants $a>0$, then for a nonzero section $s \in H^{0}(X, L)$ the multiplication map induced by $\otimes s$ 
\begin{equation*}
\times s:
H^{q}(X, K_{X}\otimes E \otimes F \otimes \mathcal I(h)) 
\xrightarrow{} 
H^{q}(X, K_{X} \otimes E\otimes F \otimes \mathcal I(h) \otimes L )
\end{equation*}
is injective for every $q\geq0$.  
\end{cor}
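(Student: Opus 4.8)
The plan is to deduce Corollary \ref{f-m2} directly from Theorem \ref{f-thm5.1} by equipping $E$ with a suitable smooth hermitian metric and choosing the constant $b$ appropriately. First, since $E$ is Nakano semi-positive by hypothesis, there exists a smooth hermitian metric $h_E$ on $E$ whose Chern curvature satisfies $\sqrt{-1}\Theta_{h_E}(E) \geq_{Nak} 0$. With this metric fixed, condition (1) of Theorem \ref{f-thm5.1} is literally the hypothesis imposed on the singular metric $h$ of $F$, namely $\sqrt{-1}\Theta_{h}(F) - a\sqrt{-1}\Theta_{h_L}(L) \geq 0$ in the sense of currents, so nothing needs to be checked there.

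The only remaining point is to verify condition (2). I would simply take $b = a$, so that the extra term $(a-b)Id_E\otimes\sqrt{-1}\Theta_{h_L}(L)$ vanishes and condition (2) reduces to $\sqrt{-1}\Theta_{h_E}(E)\geq_{Nak}0$, which holds by the choice of $h_E$; both $a$ and $b$ are then positive as required. Alternatively, one may keep any $0 < b \leq a$: since $L$ is semi-positive and $a - b \geq 0$, the form $\sqrt{-1}\Theta_{h_L}(L)$ is a semi-positive $(1,1)$-form, hence $(a-b)Id_E\otimes\sqrt{-1}\Theta_{h_L}(L)$ is Nakano semi-positive, and the sum of two Nakano semi-positive curvature terms remains Nakano semi-positive, so condition (2) again holds.

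With conditions (1) and (2) both satisfied for the chosen $h_E$ and constants $a,b$, Theorem \ref{f-thm5.1} applies verbatim and yields the injectivity of
\[
\times s\colon H^{q}(X, K_{X}\otimes E\otimes F\otimes \mathcal I(h)) \longrightarrow H^{q}(X, K_{X}\otimes E\otimes F\otimes \mathcal I(h)\otimes L)
\]
for every $q\geq 0$, which is exactly the assertion of Corollary \ref{f-m2}; note that the multiplier ideal sheaf $\mathcal I(h)$ is carried through unchanged. Because the derivation is a straightforward specialization, there is no serious obstacle here; the only mild subtlety worth recording is the elementary fact that tensoring a semi-positive $(1,1)$-form with $Id_E$ produces a Nakano semi-positive curvature term and that Nakano semi-positivity is stable under addition, which is precisely what makes the reduction to Theorem \ref{f-thm5.1} transparent.
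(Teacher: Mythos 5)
Your proposal is correct and matches the paper's own proof exactly: the paper likewise fixes a smooth metric $h_E$ with $\sqrt{-1}\Theta_{h_E}(E)\geq_{Nak}0$, takes $b=a$ so that condition (2) of Theorem \ref{f-thm5.1} reduces to this Nakano semi-positivity, and then applies Theorem \ref{f-thm5.1} verbatim. Your additional remark that any $0<b\leq a$ works is a harmless (and valid) elaboration beyond what the paper records.
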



Moreover, by applying Theorem \ref{f-thm5.1} we can also prove some vanishing theorems of Nadel type on smooth projective manifolds as follows.\begin{cor}\label{f-thm1.3}
Let $X$ be a smooth projective manifold with 
a K\"ahler form $\omega$ and $E$ a Nakano semi-positive vector bundle on $X$.
Let $F$ be a holomorphic line bundle on $X$ with 
a singular hermitian metric $h$ such that 
$\sqrt{-1}\Theta_{h}(F)\geq \varepsilon \omega$ in the sense of currents for some $\varepsilon >0$. Then for every $q>0$ we have 
$$H^q(X, K_X\otimes E\otimes F\otimes \mathcal I(h))=0.$$ 
\end{cor}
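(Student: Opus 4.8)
The plan is to deduce this vanishing from the injectivity Theorem \ref{f-thm5.1} together with Serre's vanishing theorem, via the standard device of comparing multiplication by a section against a high-power cohomology group that vanishes for degree reasons. Since $X$ is projective, I would first fix an ample line bundle $L$ on $X$ admitting a nonzero section $s\in H^0(X,L)$ (for instance a sufficiently high power of any ample bundle, which remains ample and is very ample), and equip it with a smooth hermitian metric $h_L$ of strictly positive curvature, so that in particular $\sqrt{-1}\Theta_{h_L}(L)\geq 0$. By compactness of $X$ there is a constant $C>0$ with $\sqrt{-1}\Theta_{h_L}(L)\leq C\omega$ as smooth forms.

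Next I would choose the constants, setting $a:=\varepsilon/C$ and $b:=a$. With this choice condition (1) of Theorem \ref{f-thm5.1} holds, because as currents
$$\sqrt{-1}\Theta_h(F)-a\,\sqrt{-1}\Theta_{h_L}(L)\geq \varepsilon\omega-aC\omega=0.$$
Condition (2) also holds: since $a-b=0$ it reduces to $\sqrt{-1}\Theta_{h_E}(E)\geq_{Nak}0$, which is precisely the Nakano semi-positivity of $E$. Moreover these same inequalities persist after twisting $F$ by any power $L^k$: writing $F_k:=F\otimes L^k$ with the singular metric $h_k:=h\,h_L^{\,k}$, the curvature is $\sqrt{-1}\Theta_{h_k}(F_k)=\sqrt{-1}\Theta_h(F)+k\sqrt{-1}\Theta_{h_L}(L)$, whence
$$\sqrt{-1}\Theta_{h_k}(F_k)-a\sqrt{-1}\Theta_{h_L}(L)=\bigl(\sqrt{-1}\Theta_h(F)-a\sqrt{-1}\Theta_{h_L}(L)\bigr)+k\sqrt{-1}\Theta_{h_L}(L)\geq 0,$$
and since $h_L$ is smooth we have $\mathcal I(h_k)=\mathcal I(h)$. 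Thus Theorem \ref{f-thm5.1} would apply to $F_k$ for every $k\geq 0$ and show that each multiplication map
$$\times s:H^q\bigl(X,K_X\otimes E\otimes F\otimes \mathcal I(h)\otimes L^k\bigr)\longrightarrow H^q\bigl(X,K_X\otimes E\otimes F\otimes \mathcal I(h)\otimes L^{k+1}\bigr)$$
is injective.

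Composing these maps for $k=0,1,\dots,m-1$ would then exhibit multiplication by $s^m$ as an injection
$$\times s^{m}:H^q\bigl(X,K_X\otimes E\otimes F\otimes \mathcal I(h)\bigr)\hookrightarrow H^q\bigl(X,K_X\otimes E\otimes F\otimes \mathcal I(h)\otimes L^{m}\bigr).$$
Setting $\mathcal F:=K_X\otimes E\otimes F\otimes \mathcal I(h)$, a coherent sheaf on $X$, and using that $L$ is ample, Serre's vanishing theorem yields $H^q(X,\mathcal F\otimes L^m)=0$ for all $q>0$ once $m$ is large enough. For such an $m$ the target of the injection vanishes, forcing $H^q(X,\mathcal F)=0$ for every $q>0$, which is exactly the assertion.

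I expect the only delicate points to be bookkeeping rather than genuine obstacles: one must ensure that an ample $L$ carrying an actual nonzero section is available (guaranteed by passing to a high enough, still ample, power), and that the multiplier ideal is genuinely unaffected by tensoring with the smooth factor $h_L^{\,k}$, so that every cohomology group in the chain is the expected one. The essential structural input is the stability of the hypotheses of Theorem \ref{f-thm5.1} under twisting by the semi-positive $L$, which is precisely what allows a single injectivity statement to be iterated into a vanishing statement.
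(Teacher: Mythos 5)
Your proposal is correct and takes essentially the same route as the paper: both proofs combine the injectivity theorem (Theorem \ref{f-thm5.1}), applied with $b=a$ and $a$ small enough that $\sqrt{-1}\Theta_{h}(F)-a\sqrt{-1}\Theta_{h_L}(L)\geq 0$, with Serre's vanishing theorem for an ample twist of the coherent sheaf $K_X\otimes E\otimes F\otimes \mathcal I(h)$. The only difference is organizational: the paper fixes $L=A^{\otimes m}$ with $m$ so large that $H^q(X, K_X\otimes E\otimes F\otimes \mathcal I(h)\otimes L)=0$ already holds and then applies the injectivity theorem a single time, whereas you iterate the injectivity along the tower of twists $F\otimes L^k$ (checking that the hypotheses are stable under twisting) and invoke Serre vanishing only at the top; both versions are valid.
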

In particular we have 
\begin{cor}[{Nadel vanishing theorem due to 
Demailly:~\cite[Theorem 4.5]{demailly-numerical}}]\label{f-thm1.4}
Let $X$ be a smooth projective manifold with 
a K\"ahler form $\omega$ and $F$ be a holomorphic line bundle on $X$ with a singular hermitian metric $h$ such that 
$\sqrt{-1}\Theta_{h}(F)\geq \varepsilon \omega$ in the sense of currents for some $\varepsilon >0$. Then for every $q>0$ we have 
$$
H^q(X, K_X\otimes F\otimes \mathcal I(h))=0.
$$ 
\end{cor}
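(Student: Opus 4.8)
The plan is to obtain Corollary \ref{f-thm1.4} as the special case $E = \mathcal{O}_X$ of Corollary \ref{f-thm1.3}. First I would take $E$ to be the trivial line bundle $\mathcal{O}_X$, regarded as a rank-one holomorphic vector bundle, and equip it with the trivial flat hermitian metric $h_E$. Then $\sqrt{-1}\Theta_{h_E}(E)\equiv 0$, and since Nakano semi-positivity for a line bundle coincides with the ordinary semi-positivity of its curvature, the bundle $(\mathcal{O}_X, h_E)$ is Nakano semi-positive.

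With this choice the hypotheses of Corollary \ref{f-thm1.3} are met verbatim: $X$ is a smooth projective manifold with \ka form $\omega$, the bundle $E = \mathcal{O}_X$ is Nakano semi-positive, and $F$ carries the singular hermitian metric $h$ with $\sqrt{-1}\Theta_{h}(F)\geq \varepsilon\omega$ in the sense of currents. Using the canonical isomorphism of sheaves $K_X \otimes \mathcal{O}_X \otimes F \otimes \I{h} \cong K_X \otimes F \otimes \I{h}$, the conclusion of Corollary \ref{f-thm1.3} reads $H^q(X, K_X \otimes F \otimes \I{h}) = 0$ for every $q > 0$, which is exactly the assertion of Corollary \ref{f-thm1.4}.

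I expect no genuine obstacle at this step: the entire analytic content—the passage through the injectivity Theorem \ref{f-thm5.1}, together with the Demailly--Peternell--Schneider equisingular approximation and the H\"ormander $L^2$ estimates underlying Corollary \ref{f-thm1.3}—has already been absorbed in the proof of that corollary. The only point worth checking is that taking $E$ of rank one with a \emph{flat} metric degenerates none of the positivity conditions, and this is immediate here since a vanishing curvature trivially satisfies $\geq_{Nak} 0$. Thus the final statement is a clean specialization, consistent with the ``in particular'' of the text, and recovers Demailly's formulation of the Nadel vanishing theorem.
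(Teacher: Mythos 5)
Your proposal is correct and matches the paper's own proof exactly: the paper also deduces Corollary \ref{f-thm1.4} from Corollary \ref{f-thm1.3} by taking $E$ to be the trivial line bundle, whose flat metric has zero curvature and is therefore Nakano semi-positive. Your write-up merely spells out the details (the flat metric, the sheaf identification $K_X\otimes\mathcal{O}_X\otimes F\otimes\mathcal I(h)\cong K_X\otimes F\otimes\mathcal I(h)$) that the paper leaves implicit.
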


Although Corollary \ref{f-thm1.3} and Corollary \ref{f-thm1.4} can be derived by applying our main injectivity theorem (Theorem \ref{f-thm5.1}) we would like to give their other proof in the final section by making direct use of the transcendental method, especially the Demailly-Peternell-Schneider equisingular approximation theorem and the H\"{o}rmander $L^2$ estimates because we believe that it is natural and of much interest. In fact, by the transcendental method we can prove much more vanishing theorems on compact \ka manifolds not only on smooth projective manifolds. For instance we can show  

\begin{thm} \label{ND}
Let $(X,\omega)$ be a compact \ka manifold of dimension $n$,
$m$ a positive integer, $E$ a holomorphic vector bundle on $X$ of rank $r$ and $F$ a pseudo-effective line bundle on $X$ equipped with a singular hermitian metric $h$ with semi-positive curvature current.
If $E$ is Demailly $m$-positive then for any $q\geq 1$ with $m\geq\min\{n-q+1,r\}$ we have
\begin{equation}
H^q(X,K_X\otimes E\otimes F\otimes \mathcal{I}(h))=0 \nonumber
\end{equation}
where $K_X$ is the canonical bundle of $X$ and $\mathcal{I}(h)$
is the multiplier ideal sheaf of $h$.
\end{thm}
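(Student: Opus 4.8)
The plan is to derive the vanishing from H\"ormander's $L^2$ estimates, after replacing the singular metric $h$ by a sequence of better-behaved metrics via the Demailly--Peternell--Schneider equisingular approximation theorem. The algebraic input is the standard positivity lemma for Demailly $m$-positive bundles: for any $E$-valued $(n,q)$-form $u$ one has a pointwise lower bound
\begin{equation*}
\langle [\sqrt{-1}\Theta_{h_E}(E),\Lambda_\omega]\, u, u\rangle \geq \delta\, |u|^2,
\end{equation*}
with $\delta>0$, precisely when $m\geq\min\{n-q+1,r\}$, since in this range the curvature operator $[\sqrt{-1}\Theta_{h_E}(E),\Lambda_\omega]$ is positive definite on $(n,q)$-forms; by compactness of $X$ the constant $\delta$ may be taken uniform. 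First I would record this lemma and fix $\delta$.

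Next I would invoke the Demailly--Peternell--Schneider approximation to obtain a decreasing sequence of singular hermitian metrics $h_\nu$ on $F$ with $\sqrt{-1}\Theta_{h_\nu}(F)\geq -\e_\nu\,\omega$ in the sense of currents, $\e_\nu\downarrow 0$, such that each $h_\nu$ has analytic singularities and $\I{h_\nu}=\I{h}$ for all $\nu$. The equisingularity $\I{h_\nu}=\I{h}$ is exactly what allows the eventual $L^2$ cohomology to be computed against the fixed sheaf $\I{h}$, and the fact that each $h_\nu$ is smooth off its singular locus $Z_\nu$ is what makes the differential-geometric machinery available there.

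On $X\setminus Z_\nu$ I would then build a complete \ka metric $\ome$ dominating $\omega$ by Demailly's standard device, so that H\"ormander's theory applies. Equipping $K_X\otimes E\otimes F$ with the metric induced by $\omega$, $h_E$ and $h_\nu$, the total curvature operator on $(n,q)$-forms becomes
\begin{equation*}
[\sqrt{-1}\Theta_{h_E}(E),\Lambda_\omega]+[\sqrt{-1}\Theta_{h_\nu}(F)\otimes Id_E,\Lambda_\omega].
\end{equation*}
The first term is $\geq\delta$ by the lemma, while the line-bundle term is $\geq -q\,\e_\nu$ because $\sqrt{-1}\Theta_{h_\nu}(F)\geq -\e_\nu\,\omega$ forces the sum of the $q$ smallest eigenvalues to be $\geq -q\,\e_\nu$; choosing $\nu$ large enough that $q\,\e_\nu<\delta$ yields a strictly positive curvature operator. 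The Bochner--Kodaira--Nakano--H\"ormander inequality then lets me solve $\dbar u=v$ with the attendant $L^2$ estimate for every $\dbar$-closed $E\otimes F$-valued $(n,q)$-form $v$ that is square-integrable against $h_\nu$.

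Finally I would pass to the limit $\nu\to\infty$, using $\I{h_\nu}=\I{h}$ together with a monotone-convergence and weak-compactness argument to extract a genuine solution square-integrable against $h$, and then identify the resulting $L^2$-Dolbeault group with $H^q(X,K_X\otimes E\otimes F\otimes\I{h})$ through Demailly's $L^2$ resolution of $K_X\otimes E\otimes F\otimes\I{h}$; this forces the cohomology to vanish. The main obstacle is the analytic control in this limit: I must keep the strict positivity $\delta-q\,\e_\nu$ from degenerating as the loci $Z_\nu$ vary, ensure the completeness construction is uniform enough to extract a weak limit of the solutions $u_\nu$, and above all guarantee that the limiting solution is $L^2$ with respect to $h$ itself rather than some larger ideal --- which is exactly the point where the equisingularity of the approximation is indispensable.
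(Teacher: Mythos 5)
Your toolbox (DPS equisingular approximation, complete \ka metrics off the singular loci, Demailly's $m$-positivity lemma for the curvature operator on $(n,q)$-forms, H\"ormander's estimate) coincides with the paper's, but your architecture routes everything through a limit $\nu\to\infty$, and that limit --- which you yourself flag as ``the main obstacle'' --- is exactly where the proposal has a genuine gap. For each $\nu$ your solution $u_\nu$ of $\dbar u_\nu=v$ is an $E\otimes F$-valued $(n,q-1)$-form on $Y_\nu=X\setminus Z_\nu$, bounded in $L^2$ with respect to the complete metric $\ome_\nu\geq\omega$ and $h_\nu$. But for $(n,p)$-forms with $p\geq 1$, enlarging the base metric \emph{decreases} the $L^2$ norm, so when $q\geq 2$ the uniform bound $\|u_\nu\|_{\ome_\nu,h_\nu}\leq C$ gives no bound in any fixed Hilbert space such as $L^{n,q-1}_{(2)}(X,E\otimes F)_{h_Eh_{\nu_0},\omega}$; moreover the spaces in which the $u_\nu$ live change with $\nu$, since both $Z_\nu$ and $\ome_\nu$ change. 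Hence ``weak compactness'' cannot be invoked as stated. Repairing this forces the two-parameter scheme of Fujino--Matsumura: metrics $\omega+\delta\ome_\nu$ decreasing to $\omega$, an inner $\delta$-limit (where the monotonicity of $(n,\bullet)$-norms goes the right way), an extension lemma for the $\dbar$-equation across $Z_\nu$, and a Cantor diagonal argument --- i.e.\ the heavy machinery of their injectivity theorem, which you would be reproducing wholesale. A smaller but real inaccuracy: the DPS theorem cannot give metrics that simultaneously have analytic singularities and satisfy $\I{h_\nu}=\I{h}$; equisingular approximations are merely smooth outside a proper closed subvariety, which is in fact all your argument uses.

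The paper's proof shows that no limit is needed at all, and this is the idea your proposal misses. Since $E$ is strictly Demailly $m$-positive and $X$ is compact, there is a uniform $\delta>0$ with $\sqrt{-1}\Theta_{h_E}(E)\geq_m \delta\,\omega\otimes Id_E$, so a \emph{single} approximation $h_{\e_0}$ with $\e_0<\delta$ already makes $h_Eh_{\e_0}$ a metric on $E\otimes F$ that is Demailly $m$-positive on $Y_{\e_0}$. Equisingularity (property (c) of Lemma \ref{equi.}) then enters not to control a limit but to identify the sheaf: by the De Rham--Weil isomorphism of Fujino (Lemma \ref{cla1}, which rests on the complete metric $\ome$ on $Y_{\e_0}$ having bounded local potentials near every point of $X$), one has $H^q(X,K_X\otimes E\otimes F\otimes\I{h})=H^q(X,K_X\otimes E\otimes F\otimes\I{h_{\e_0}})\simeq H^{n,q}_{(2)}(Y_{\e_0},E\otimes F)_{\ome,h_Eh_{\e_0}}$. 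Finite-dimensionality of this group yields the Hodge decomposition, so it equals the harmonic space, and the Bochner--Kodaira inequality together with the pointwise positivity of $[\sqrt{-1}\Theta_{h_Eh_{\e_0}}(E\otimes F),\Lambda_{\ome}]$ forces every $L^2$ harmonic $(n,q)$-form to vanish. This single-$\e_0$ argument is precisely what makes the vanishing theorem so much lighter than the injectivity theorem, and it is the step to which your proposal should be redirected.
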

For the definition of the Demailly $m$-positivity see the following Section \ref{pre.}. Theorem \ref{ND} is the first main vanishing theorem in this paper, which generalizes the well-known Nakano-Demailly vanishing theorem to the case formulated by pseudo-effective line bundles and multiplier ideal sheaves. 
By Theorem \ref{ND} we obtain
the following Corollary \ref{.Griffiths} and Corollary \ref{.Nakano}.
Corollary \ref{.Griffiths} (resp. Corollary \ref{.Nakano}) generalizes the original Griffiths (resp. Nakano) vanishing theorem cf. \cite{Griffiths} (resp. cf. \cite{demailly-note,Nakano}).
\begin{cor} \label{.Griffiths}
Let $E$ be a holomorphic vector bundle 
over an $n$-dimensional compact \ka manifold $X$ and $F$ a pseudo-effective line bundle on $X$ equipped with a singular Hermitian metric $h$ with semi-positive curvature current.
If $E$ is Griffiths positive then 
\begin{equation}
H^n(X,K_X\otimes E\otimes F\otimes \mathcal{I}(h))=0.\nonumber
\end{equation}
\end{cor}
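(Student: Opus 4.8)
The plan is to deduce this corollary directly from the first main vanishing theorem, Theorem \ref{ND}, by specializing the parameter $m$ and the cohomological degree $q$ to their borderline values. The only conceptual ingredient needed is the relationship between Griffiths positivity and Demailly $m$-positivity.

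First I would recall from Section \ref{pre.} that, for a holomorphic vector bundle $(E,h_E)$ with Chern curvature $\sqrt{-1}\Theta_{h_E}(E)$ regarded as a Hermitian form on $T_X\otimes E$, Demailly $m$-positivity means strict positivity of this form on all nonzero tensors $u\in T_X\otimes E$ of rank at most $m$. In particular, Demailly $1$-positivity is tested only on decomposable tensors $u=\xi\otimes e$ with $\xi\in T_X$ and $e\in E$, which is precisely the defining condition for Griffiths positivity. Hence the hypothesis that $E$ is Griffiths positive is exactly the statement that $E$ is Demailly $1$-positive, so Theorem \ref{ND} is applicable with $m=1$.

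Next I would apply Theorem \ref{ND} with $m=1$ and $q=n=\dim X$. The numerical condition required there is $m\geq\min\{n-q+1,r\}$, where $r$ denotes the rank of $E$. Substituting $q=n$ gives $\min\{n-n+1,r\}=\min\{1,r\}=1$, since $r\geq 1$, so the condition becomes $1\geq 1$, which indeed holds. All the remaining hypotheses of Theorem \ref{ND}---that $(X,\omega)$ is a compact \ka manifold of dimension $n$, and that $F$ is a pseudo-effective line bundle equipped with a singular Hermitian metric $h$ of semi-positive curvature current---are assumed verbatim in the corollary. Therefore Theorem \ref{ND} yields $H^n(X,K_X\otimes E\otimes F\otimes \mathcal{I}(h))=0$, which is the desired conclusion.

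There is essentially no serious obstacle here, since the entire analytic content has already been absorbed into Theorem \ref{ND} and this corollary is merely its endpoint case $m=1$, $q=n$. The only points requiring care are the definitional identification of Griffiths positivity with Demailly $1$-positivity and the verification that the top degree $q=n$ exactly saturates the inequality $m\geq\min\{n-q+1,r\}$; both are immediate once the definition of Demailly $m$-positivity from Section \ref{pre.} is in hand.
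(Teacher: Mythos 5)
Your proposal is correct and follows exactly the paper's own route: identify Griffiths positivity with Demailly $1$-positivity and apply Theorem \ref{ND} with $m=1$, $q=n$, where $\min\{n-q+1,r\}=1$. The paper's proof is just a terser version of the same argument, so there is nothing to add.
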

\begin{cor}  \label{.Nakano}
Let $E$ be a holomorphic vector bundle over a compact \ka manifold $X$ and $F$ a pseudo-effective line bundle on $X$ equipped with a singular Hermitian metric $h$ with semi-positive curvature current.
If $E$ is Nakano positive then for any $q\geq 1$ we have
\begin{equation}
H^q(X,K_X\otimes E\otimes F\otimes \mathcal{I}(h))=0.\nonumber
\end{equation}
\end{cor}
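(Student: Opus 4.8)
The plan is to deduce this statement directly from Theorem \ref{ND} by locating where Nakano positivity sits in the hierarchy of Demailly $m$-positivity. Recall that the curvature tensor of a Hermitian vector bundle $(E,h)$ of rank $r$ on the $n$-dimensional manifold $X$ induces a Hermitian form on $T_X\otimes E$, and that $E$ is Demailly $m$-positive exactly when this form is positive on all tensors of rank at most $m$. Since every element of $T_X\otimes E$ has rank at most $\min\{n,r\}$, positivity of the curvature form on \emph{all} tensors — which is precisely the definition of Nakano positivity — coincides with Demailly $m$-positivity at the maximal level $m=\min\{n,r\}$. Thus my first step is simply to observe that a Nakano positive bundle $E$ is automatically Demailly $m$-positive with $m=\min\{n,r\}$.

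With this identification in hand, the second step is a numerical verification that the hypothesis of Theorem \ref{ND} is met for every $q\geq 1$. Taking $m=\min\{n,r\}$, I must check that
\begin{equation*}
\min\{n,r\}\geq \min\{n-q+1,r\}\qquad\text{for all }q\geq 1.
\end{equation*}
This is immediate: since $q\geq 1$ forces $n-q+1\leq n$, monotonicity of $t\mapsto\min\{t,r\}$ gives $\min\{n-q+1,r\}\leq\min\{n,r\}$. It is worth separating the two cases $r\leq n$, where both sides may equal $r$, and $r>n$, where the left side equals $n$ and dominates $n-q+1$. In either case the arithmetic condition $m\geq\min\{n-q+1,r\}$ of Theorem \ref{ND} holds simultaneously for all $q\geq 1$.

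Applying Theorem \ref{ND} with $F$ the given pseudo-effective line bundle, its singular Hermitian metric $h$ of semi-positive curvature current, and $m=\min\{n,r\}$, I conclude that $H^q(X,K_X\otimes E\otimes F\otimes \mathcal{I}(h))=0$ for every $q\geq 1$, as desired. There is no serious analytic obstacle in this deduction: its entire content is the correct placement of Nakano positivity at the top of the Demailly $m$-positivity scale, together with the observation that this top level dominates $\min\{n-q+1,r\}$ uniformly in $q$. The only point demanding mild care is the bookkeeping in comparing $r$ and $n$, which is handled by the case distinction above.
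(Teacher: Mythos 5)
Your proposal is correct and coincides with the paper's own argument: both identify Nakano positivity with Demailly $m$-positivity at the maximal level $m=\min\{n,r\}$ (cf.\ the remark after Definition \ref{def.44}) and then check the elementary inequality $\min\{n,r\}\geq\min\{n-q+1,r\}$ for all $q\geq 1$ before invoking Theorem \ref{ND}. Your case analysis comparing $r$ and $n$ is slightly more explicit than the paper's one-line verification, but the content is identical.
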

Here we present some other applications of Theorem \ref{ND}. 
Let $D=\sum\alpha_jD_j\geq 0$ be an effective $\mathbb{Q}$-divisor and define the multiplier ideal sheaf $\mathcal{I}(D)$ of $D$ to be equal to $\mathcal{I}(\varphi)$ where $\varphi=\sum\alpha_j\log|g_j|$ is the corresponding psh function
defined by generators $g_j$ of $\mathcal{O}(D_j)$.
Further, if we suppose that $D$ is a divisor with normal crossings, 
then we have 
$$\mathcal{I}(D)=\mathcal{O}(-[D])$$ where $[D]=\sum[\alpha_j]D_j$ is the integer part of $D$ (cf. \cite{demailly,dem}). As a simple consequence of Theorem \ref{ND} we obtain 

\begin{thm}\label{hehe}
Let $E$ be a holomorphic vector bundle of rank $r$ over an $n$-dimensional compact \ka manifold $X$, $m$ a positive integer.
Assume that $D=\sum^t_{i=1}a_iD_i$ is an effective $\mathbb{Q}$-divisor in X with normal crossings and denote $D'=\sum^t_{i=1}(a_i-[a_i])D_i$.
If $E$ is Demailly $m$-positive then for any $q\geq 1$ with $m\geq\min\{n-q+1,r\}$ we have
\begin{equation}
H^q(X,K_X\otimes E \otimes \mathcal{O}(D'))=0.\nonumber
\end{equation}
\end{thm}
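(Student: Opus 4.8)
The plan is to deduce Theorem \ref{hehe} directly from Theorem \ref{ND} by choosing the pseudo-effective line bundle and its singular metric so that they encode the effective $\mathbb{Q}$-divisor $D$. Concretely, I would take $F=\mathcal{O}(D)$, the line bundle attached to $D=\sum_{i=1}^t a_iD_i$, equipped with the canonical singular hermitian metric $h$ whose local weight is $\varphi=\sum_{i=1}^t a_i\log|g_i|^2$, where $g_i$ is a local generator of $\mathcal{O}(D_i)$. Since $D$ is effective, the Lelong--Poincar\'e formula identifies $\sqrt{-1}\Theta_h(F)$ with the current of integration along $D$, namely $\sum_i a_i[D_i]\geq 0$; in particular $h$ has semi-positive curvature current and $F$ is pseudo-effective, which is exactly what Theorem \ref{ND} demands of the pair $(F,h)$.

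The second ingredient is the multiplier ideal sheaf. By construction $\mathcal{I}(h)=\mathcal{I}(\varphi)=\mathcal{I}(D)$, and since $D$ has normal crossings the computation recalled just before the statement gives $\mathcal{I}(D)=\mathcal{O}(-[D])$, with $[D]=\sum_i[a_i]D_i$ the integral part. I would then apply Theorem \ref{ND} to $E$ together with this pair $(F,h)$: because $E$ is Demailly $m$-positive and $m\geq\min\{n-q+1,r\}$, the theorem yields $H^q(X,K_X\otimes E\otimes F\otimes\mathcal{I}(h))=0$ for every $q\geq 1$ in the stated range.

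It then remains only to match the coefficient sheaf with $\mathcal{O}(D')$. Tensoring gives $F\otimes\mathcal{I}(h)=\mathcal{O}(D)\otimes\mathcal{O}(-[D])=\mathcal{O}(D-[D])=\mathcal{O}(D')$, since by definition $D'=\sum_i(a_i-[a_i])D_i=D-[D]$. Substituting this identification into the vanishing obtained in the previous step produces exactly $H^q(X,K_X\otimes E\otimes\mathcal{O}(D'))=0$, as claimed.

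The only point deserving genuine care — and the sole place where the argument is more than formal bookkeeping — is the handling of $\mathcal{O}(D)$ when the coefficients $a_i$ are not integers, for then $\mathcal{O}(D)$ is a $\mathbb{Q}$-line bundle rather than an honest integral one. The clean way to proceed is to observe that the $L^2$ proof of Theorem \ref{ND} (via the Demailly--Peternell--Schneider approximation and the H\"{o}rmander estimates) depends only on the local weight $\varphi$ and on its curvature current, not on the integrality of $F$, so the object that actually enters the cohomology is the honest coherent sheaf $F\otimes\mathcal{I}(h)=\mathcal{O}(D')$ computed above. The positivity hypothesis required by Theorem \ref{ND} is automatic here, because effectivity of $D$ already forces the curvature current to be the non-negative integration current along $D$; consequently no real obstacle arises, and the entire substance of the corollary is carried by Theorem \ref{ND}.
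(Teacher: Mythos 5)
Your proposal is correct and takes essentially the same route as the paper: the paper's own (one-line) proof likewise deduces the statement from Theorem \ref{ND} by equipping $\mathcal{O}(D)$ with the canonical singular metric of the divisor and using the normal crossing computation $\mathcal{I}(D)=\mathcal{O}(-[D])$, so that $\mathcal{O}(D)\otimes\mathcal{I}(D)=\mathcal{O}(D')$. The only difference is that you spell out the verification of pseudo-effectivity and explicitly flag the issue that $\mathcal{O}(D)$ (and indeed $\mathcal{O}(D')$) is only a $\mathbb{Q}$-line bundle when the $a_i$ are non-integral, a point the paper's proof and statement gloss over entirely.
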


In particular, we obtain  
\begin{cor} [cf. \cite{log}]\label{hdhd}
Let $E$ be a holomorphic vector bundle of rank $r$ over an $n$-dimensional compact \ka manifold $X$ and $D=\sum^t_{i=1}a_iD_i$ an effective normal crossing $\mathbb{Q}$-divisor $D$ in $X$ with $0\leq a_i<1$. If $E$ is Demailly $m$-positive then for any $q\geq 1$ with $m\geq\min\{n-q+1,r\}$ we have
\begin{equation}
H^q(X,K_X\otimes E\otimes D)=0.\nonumber
\end{equation}
\end{cor}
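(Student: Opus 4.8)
The plan is to obtain Corollary \ref{hdhd} as an immediate specialization of Theorem \ref{hehe}, the only work being to check how the extra hypothesis $0\leq a_i<1$ simplifies the fractional-part divisor $D'$ that appears there. First I would record the elementary observation that for each coefficient with $0\leq a_i<1$ the integer part vanishes, $[a_i]=0$. Consequently every coefficient of $D'=\sum_{i=1}^t(a_i-[a_i])D_i$ reduces to $a_i-[a_i]=a_i$, so that $D'=\sum_{i=1}^t a_iD_i=D$ and hence $\mathcal{O}(D')=\mathcal{O}(D)$ as line bundles on $X$ (here I read the tensor factor ``$\otimes D$'' in the statement as $\otimes\,\mathcal{O}(D)$).

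With this identification in place the conclusion follows at once. Applying Theorem \ref{hehe} to the very same bundle $E$ and the same normal-crossing $\mathbb{Q}$-divisor $D$, and using the hypothesis that $E$ is Demailly $m$-positive, the vanishing $H^q(X,K_X\otimes E\otimes\mathcal{O}(D'))=0$ for every $q\geq 1$ with $m\geq\min\{n-q+1,r\}$ becomes precisely $H^q(X,K_X\otimes E\otimes\mathcal{O}(D))=0$ in the asserted range, since $\mathcal{O}(D')=\mathcal{O}(D)$. There is no genuine obstacle: the argument is purely formal once the reduction $D'=D$ is noted, and the positivity hypothesis, the rank $r$, and the numerical range $m\geq\min\{n-q+1,r\}$ are all inherited verbatim from Theorem \ref{hehe}.

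For completeness I would remark that one can equally bypass Theorem \ref{hehe} and deduce the corollary directly from Theorem \ref{ND}. Taking $F=\mathcal{O}(D)$ equipped with the singular hermitian metric $h$ attached to the effective $\mathbb{Q}$-divisor $D$, so that $F$ is pseudo-effective with semi-positive curvature current, the normal-crossing hypothesis gives $\mathcal{I}(h)=\mathcal{I}(D)=\mathcal{O}(-[D])$; since all $[a_i]=0$ we have $[D]=0$ and thus $\mathcal{I}(h)=\mathcal{O}_X$. Hence $F\otimes\mathcal{I}(h)=\mathcal{O}(D)$, and Theorem \ref{ND} yields $H^q(X,K_X\otimes E\otimes\mathcal{O}(D))=0$ in the required range directly.
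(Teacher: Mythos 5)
Your proposal is correct and follows exactly the paper's own argument: since $0\leq a_i<1$ forces $[a_i]=0$, one gets $D'=D$, and the statement is then an immediate specialization of Theorem \ref{hehe}. Your closing remark that one could instead apply Theorem \ref{ND} directly with $F=\mathcal{O}(D)$ and $\mathcal{I}(h)=\mathcal{O}(-[D])=\mathcal{O}_X$ is also sound, and is essentially the same reduction the paper itself uses to derive Theorem \ref{hehe} from Theorem \ref{ND}.
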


Our second main vanishing theorem is the following Theorem \ref{thm2}, which contains the famous Nadel vanishing theorem as a special case. 
By Theorem \ref{thm2} we obtain the following Corollary \ref{thm3} and Corollary \ref{Nadel}.

\begin{thm} \label{thm2}
Let $(X,\omega)$ be a compact \ka manifold of dimension $n$ and $L$ be a holomorphic line bundle on $X$ with  a singular hermitian metric $h$ such that $\sqrt{-1}\Theta_{h}(L)\geq\delta\omega$
for some constant $\delta>0$. If $(E,h_E)$ is an hermitian holomorphic vector bundle on $X$ of rank $r$ such that
\begin{equation}\nonumber
\sqrt{-1}\Theta_{h_E}(E)+\tau\omega\otimes Id_E\geq_{m}0
\end{equation}
for some constant $\tau<\delta$, then for any $q\geq 1$ with $m\geq\min\{n-q+1,r\}$ we have
\begin{equation}
  H^{q}(X,K_X \otimes E\otimes L\otimes\mathcal{I}(h))=0. \nonumber
\end{equation}
\end{thm}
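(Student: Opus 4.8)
The plan is to reduce the assertion to a strict Demailly $m$-positivity estimate for the twisted bundle $E\otimes L$ and then to run essentially the same transcendental $L^2$ machinery as in the proof of Theorem \ref{ND}. The point is that although $E$ is only $m$-positive after the correction by $\tau\omega\otimes Id_E$, the strict positivity $\sqrt{-1}\Theta_h(L)\geq\delta\omega$ of $L$ with $\delta>\tau$ more than compensates for this defect, so that $E\otimes L$ carries a genuinely strictly $m$-positive curvature. The only structural difference with Theorem \ref{ND} is that here the strict positivity is carried by the line bundle $L$, which is equipped with a singular metric; hence we must regularize $h$ before performing the curvature estimate.

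First I would record the curvature identity
\[
\sqrt{-1}\Theta_{h_E\otimes h}(E\otimes L)=\big(\sqrt{-1}\Theta_{h_E}(E)+\tau\omega\otimes Id_E\big)+\big(\sqrt{-1}\Theta_h(L)-\tau\omega\big)\otimes Id_E .
\]
The first bracket is $\geq_m 0$ by hypothesis, while the second bracket is $\geq(\delta-\tau)\omega\otimes Id_E>0$ because $\delta>\tau$; being the sum of an $m$-semipositive term and a strictly positive scalar term (which is in particular strictly Nakano, hence strictly $m$-positive), the curvature of $E\otimes L$ is strictly $m$-positive with a uniform gap controlled by $\delta-\tau$. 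Consequently, by the pointwise curvature estimate for Demailly $m$-positive bundles recalled in Section \ref{pre.}, for every $E\otimes L$-valued $(n,q)$-form $u$ with $q\geq1$ one obtains the pointwise inequality $\langle[\sqrt{-1}\Theta_{h_E\otimes h}(E\otimes L),\Lambda_\omega]u,u\rangle\geq(\delta-\tau)|u|^2$, the condition $m\geq\min\{n-q+1,r\}$ being exactly what guarantees that the $m$-semipositive part contributes nonnegatively on $(n,q)$-forms.

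Next I would regularize. Since $h$ is only singular, I would invoke the Demailly--Peternell--Schneider equisingular approximation theorem to produce a family of less singular metrics $h_\nu$ on $L$ with analytic singularities along proper analytic subsets $Z_\nu$ such that $\sqrt{-1}\Theta_{h_\nu}(L)\geq(\delta-\varepsilon_\nu)\omega$ with $\varepsilon_\nu\downarrow0$ and $\mathcal I(h_\nu)=\mathcal I(h)$ for all $\nu$. Choosing $\nu$ large enough that $\delta-\varepsilon_\nu>\tau$ keeps the gap strictly positive, so the curvature estimate above persists for $h_E\otimes h_\nu$ on the Zariski open set $X\setminus Z_\nu$, on which $h_\nu$ is smooth. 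There I would equip $X\setminus Z_\nu$ with a complete \ka metric $\widetilde\omega$ of the standard type and set up the Bochner--Kodaira--Nakano identity together with the H\"ormander $L^2$ estimate for $\overline{\partial}$ acting on $E\otimes L$-valued $(n,q)$-forms. The strict $m$-positivity yields solvability of $\overline{\partial}v=u$ with an a priori $L^2$ bound, and via the standard identification of $H^q(X,K_X\otimes E\otimes L\otimes\mathcal I(h))$ with the $L^2$ Dolbeault cohomology (computed with respect to $h_E\otimes h$ and $\mathcal I(h)=\mathcal I(h_\nu)$) this forces every class to be $\overline{\partial}$-exact, hence to vanish. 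Finally I would pass to the limit, first letting $\widetilde\omega$ degenerate to $\omega$ and then letting $\nu\to\infty$, using that the multiplier ideals stay constant along the approximation to keep the relevant $L^2$ spaces under control.

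The main obstacle I anticipate lies in the limiting analysis rather than in the curvature bookkeeping. Two points demand care. First, the pointwise $m$-positivity estimate must survive the replacement of $\omega$ by the degenerate complete metric $\widetilde\omega$: unlike Nakano positivity, the $m$-positive curvature term does not obey a simple monotonicity under a change of base metric, so one must verify that the sign of $\langle[\sqrt{-1}\Theta_{h_E\otimes h_\nu}(E\otimes L),\Lambda_{\widetilde\omega}]u,u\rangle$ is preserved for the relevant range of $q$. Second, one must extract a weak limit of the solutions $v$ and check that it still belongs to the $L^2$ space cut out by $\mathcal I(h)$, which is precisely where the equisingularity $\mathcal I(h_\nu)=\mathcal I(h)$ is indispensable. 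Granting the estimates already established in the proof of Theorem \ref{ND}, these limiting arguments go through with only the evident modifications coming from the gap $\delta-\tau$.
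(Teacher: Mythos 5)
Your reduction is exactly the paper's: split the curvature as
\begin{equation*}
\sqrt{-1}\Theta_{h_E\otimes h_{\e}}(E\otimes L)
=\bigl(\sqrt{-1}\Theta_{h_E}(E)+\tau\omega\otimes Id_E\bigr)
+\bigl(\sqrt{-1}\Theta_{h_{\e}}(L)-\tau\omega\bigr)\otimes Id_E,
\end{equation*}
regularize $h$ by the Demailly--Peternell--Schneider theorem, and choose $\e$ with $\delta-\e>\tau$ (the paper fixes one $\e_0<\tfrac{1}{2}(\delta-\tau)$) so that $E\otimes L$ with the metric $h_E\otimes h_{\e_0}$ is Demailly $m$-positive on $Y_{\e_0}=X\setminus Z_{\e_0}$; then run the $L^2$ theory on $Y_{\e_0}$ with a complete K\"ahler metric $\widetilde\omega$. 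Where you diverge is in how the vanishing is extracted, and there your plan has a genuine gap. You propose to solve $\dbar v=u$ via the H\"ormander estimate (Lemma \ref{L2}) using the uniform gap $\langle[\sqrt{-1}\Theta,\Lambda]u,u\rangle\geq(\delta-\tau)|u|^2$. That bound is correct with respect to $\omega$, but the H\"ormander lemma must be applied on the \emph{complete} manifold $(Y_{\e_0},\widetilde\omega)$, and with respect to $\widetilde\omega$ the gap degenerates: near $Z_{\e_0}$ the eigenvalues of $\omega$ relative to $\widetilde\omega$ tend to zero, so $A=[\sqrt{-1}\Theta,\Lambda_{\widetilde\omega}]$ is positive definite pointwise but \emph{not} uniformly bounded below, and the hypothesis $\int_{Y_{\e_0}}(A^{-1}u,u)\,dV_{\widetilde\omega}<\infty$ is simply not verified for an arbitrary $\dbar$-closed $L^2$-form $u$. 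You flag precisely this as a ``point demanding care,'' but the care is never supplied, and, as you yourself note, for $m$-positivity there is no simple monotonicity lemma under a change of base metric to fall back on.

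The paper avoids this issue entirely, which is the real content of its proof of Theorem \ref{thm2}. By Fujino's De Rham--Weil isomorphism (Lemma \ref{cla1}), the $\dbar$-$L^2$ cohomology on $(Y_{\e_0},\widetilde\omega)$ computes $H^q(X,K_X\otimes E\otimes L\otimes\mathcal I(h_{\e_0}))=H^q(X,K_X\otimes E\otimes L\otimes\mathcal I(h))$ and is therefore finite dimensional, so every class has a \emph{harmonic} representative $u$; the Bochner inequality on the complete manifold gives
\begin{equation*}
0=\|\dbar u\|^2+\|\dbar^{*}u\|^2
\geq\int_{Y_{\e_0}}\langle[\sqrt{-1}\Theta,\Lambda_{\widetilde\omega}]u,u\rangle\,dV_{\widetilde\omega}\geq 0,
\end{equation*}
and the pointwise positive definiteness of the curvature operator --- which by Demailly's Lemma 7.2 is a purely algebraic consequence of $m$-positivity, valid for \emph{any} base metric, hence for $\widetilde\omega$ --- forces $u=0$. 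No uniform lower bound and no $\dbar$-equation solving are needed. Relatedly, your final double limit ($\widetilde\omega\to\omega$, then $\nu\to\infty$) is superfluous: since $\mathcal I(h_\nu)=\mathcal I(h)$, the cohomology group computed at a single fixed $\nu$ is already the group in the statement, which is exactly why the paper fixes one $\e_0$ once and for all and never passes to a limit. To repair your write-up, replace the H\"ormander-plus-weak-limit ending by the finite-dimensionality/harmonic-representative argument.
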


\begin{cor} \label{thm3}
Let $(X,\omega)$ be a compact \ka manifold and $L$ a holomorphic line bundle on $X$ with a singular hermitian metric $h_L$ such that $i\Theta_{h}(L)\geq\delta\omega$
for some constant $\delta>0$. If $(E,h_E)$ is an hermitian holomorphic vector bundle on $X$ of rank $r$ such that
\begin{equation}\nonumber
\sqrt{-1}\Theta_{h_E}(E)+\tau Id_E\otimes\omega\geq_{Nak}0
\end{equation}
for some constant $\tau<\delta$, then for any $q\geq 1$ we have
\begin{equation}
  H^{q}(X,K_X \otimes E\otimes L\otimes\mathcal{I}(h))=0. \nonumber
\end{equation}
\end{cor}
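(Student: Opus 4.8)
The plan is to obtain Corollary \ref{thm3} directly from Theorem \ref{thm2} by recognizing that its Nakano semi-positivity hypothesis is exactly the strongest instance of the Demailly $m$-semi-positivity already treated there. First I would recall the hierarchy among the curvature positivity notions from Section \ref{pre.}: regarding the curvature $\sqrt{-1}\Theta_{h_E}(E)$ as a Hermitian form on $T_X\otimes E$, Demailly $m$-semi-positivity requires this form to be non-negative on all tensors of rank at most $m$, so the condition grows more restrictive as $m$ increases, with $m=1$ recovering Griffiths semi-positivity and $m=\min\{n,r\}$ recovering Nakano semi-positivity. In particular, a bundle that is Nakano semi-positive is automatically $m$-semi-positive for every $m\leq\min\{n,r\}$.

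Next I would apply this to the twisted curvature. Since the hypothesis provides
\[
\sqrt{-1}\Theta_{h_E}(E)+\tau\, Id_E\otimes\omega\geq_{Nak}0,
\]
the very same twisted form is non-negative on tensors of every rank, so that
\[
\sqrt{-1}\Theta_{h_E}(E)+\tau\,\omega\otimes Id_E\geq_{m}0
\]
holds for $m=\min\{n,r\}$ (and indeed for every smaller $m$ as well). This places us squarely in the setting of Theorem \ref{thm2}, with the same singular metric $h$ on $L$ satisfying $\sqrt{-1}\Theta_{h}(L)\geq\delta\omega$ and the same constant $\tau<\delta$.

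It then remains to verify the numerical condition of Theorem \ref{thm2}. For any fixed $q\geq 1$ we have $n-q+1\leq n$, hence $\min\{n-q+1,r\}\leq\min\{n,r\}=m$, so the required inequality $m\geq\min\{n-q+1,r\}$ is satisfied. Therefore all hypotheses of Theorem \ref{thm2} hold for this value of $m$, and it yields
\[
H^{q}(X,K_X\otimes E\otimes L\otimes\mathcal{I}(h))=0.
\]
As $q\geq 1$ was arbitrary and the single choice $m=\min\{n,r\}$ works uniformly, the vanishing holds for all $q\geq 1$, which is exactly the assertion of the corollary.

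I do not expect a genuine obstacle here, since the result is essentially the specialization of Theorem \ref{thm2} to its strongest positivity assumption. The only points demanding care are getting the \emph{direction} of the implication right — Nakano semi-positivity is the stronger notion and implies Demailly $m$-semi-positivity, not conversely — and the elementary bound $\min\{n-q+1,r\}\leq\min\{n,r\}$, which is what allows the constraint linking $m$ and $q$ to drop out so that the conclusion becomes uniform in $q$.
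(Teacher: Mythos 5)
Your proposal is correct and follows essentially the same route as the paper: the paper's proof likewise observes that the twisted Nakano semi-positivity hypothesis implies the Demailly $m$-semi-positivity hypothesis of Theorem \ref{thm2} for every relevant $m\geq\min\{n-q+1,r\}$, and then invokes that theorem. Your write-up merely makes explicit the choice $m=\min\{n,r\}$ and the elementary bound $\min\{n-q+1,r\}\leq\min\{n,r\}$, which the paper leaves implicit.
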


\begin{cor} [Nadel vanishing theorem \cite{demailly,Nadel}]\label{Nadel}
Let $(X,\omega)$ be a compact K\"{a}hler manifold and $L$ be 
a holomorphic line bundle on $X$ with a singular hermitian
metric $h$ such that $i\Theta_{h}(L)\geq\delta\omega$
for some constant $\delta>0$. Then for any $q\geq1$ we have
\begin{equation}
  H^{q}(X,K_X \otimes L\otimes\mathcal{I}(h))=0.\nonumber
\end{equation}
\end{cor}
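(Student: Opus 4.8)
The plan is to obtain Corollary \ref{Nadel} as the simplest special case of Theorem \ref{thm2} (equivalently of its Nakano specialization Corollary \ref{thm3}), namely the case in which the auxiliary vector bundle is the trivial line bundle. The point is that all of the genuine analytic work—the Demailly--Peternell--Schneider equisingular approximation of the singular metric $h$ together with the H\"ormander $L^2$ estimates—has already been carried out in the proof of Theorem \ref{thm2}, so only an elementary specialization remains.

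Concretely, I would set $E=\mathcal{O}_X$, the trivial holomorphic line bundle equipped with the trivial flat hermitian metric $h_E$, so that $r=1$ and $\sqrt{-1}\Theta_{h_E}(E)=0$. Since $\delta>0$ I can choose a constant $\tau$ with $0\leq \tau<\delta$, and then
\[
\sqrt{-1}\Theta_{h_E}(E)+\tau\,\omega\otimes Id_E=\tau\omega\geq 0 .
\]
For a bundle of rank one the notions of Griffiths, Nakano and Demailly $m$-positivity all coincide with ordinary positivity of the curvature $(1,1)$-form, so this semi-positive form satisfies $\tau\omega\otimes Id_E\geq_m 0$ for every $m\geq 1$ (equivalently it verifies the hypothesis $\geq_{Nak}0$ of Corollary \ref{thm3}). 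Moreover, with $r=1$ the numerical condition in Theorem \ref{thm2} becomes $m\geq\min\{n-q+1,1\}$, which for every $q$ with $1\leq q\leq n$ merely requires $m\geq 1$; taking $m=1$ the hypotheses are met, while for $q>n$ the cohomology of a coherent sheaf on an $n$-dimensional compact manifold vanishes for dimension reasons. Substituting $E=\mathcal{O}_X$ into the conclusion and using the identification $K_X\otimes E\otimes L\otimes\mathcal{I}(h)=K_X\otimes L\otimes\mathcal{I}(h)$, Theorem \ref{thm2} yields $H^{q}(X,K_X\otimes L\otimes\mathcal{I}(h))=0$ for all $q\geq 1$, which is exactly the assertion.

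I do not expect any real obstacle here, since the statement is a direct corollary obtained by trivializing the vector-bundle twist. The only point deserving a word of care is the equivalence of the various curvature-positivity conventions in rank one, which is what guarantees that the vanishing curvature of $\mathcal{O}_X$ together with the strictly smaller multiple $\tau\omega$ of the K\"ahler form legitimately satisfies the $\geq_m$ (equivalently $\geq_{Nak}$) hypothesis; once this bookkeeping is recorded, the conclusion follows immediately from the already-proved Theorem \ref{thm2}.
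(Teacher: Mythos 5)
Your proposal is correct and follows essentially the same route as the paper: the paper likewise deduces Corollary \ref{Nadel} from Corollary \ref{thm3} (itself a specialization of Theorem \ref{thm2}) by taking $E$ to be the trivial line bundle and $\tau=0<\delta$, so that the hypothesis $\sqrt{-1}\Theta_{h_E}(E)+\tau\,Id_E\otimes\omega\geq_{Nak}0$ holds trivially. Your extra bookkeeping about the rank-one coincidence of the positivity notions and the condition $m\geq\min\{n-q+1,1\}$ is exactly the content the paper compresses into its one-line specialization, so there is nothing further to add.
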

It is obvious that Corollary \ref{f-thm1.3} (resp. Corollary \ref{f-thm1.4})  follows from Corollary \ref{thm3} (resp. Corollary \ref{Nadel}). This means that we can give a direct proof of Corollary \ref{f-thm1.3} and Corollary \ref{f-thm1.4} by using the transcendental method rather than 
our main injectivity theorem (Theorem \ref{f-thm5.1}).

This paper is organizied as follows. 
In Section \ref{pre.}, we recall some basic definitions 
and collect several preliminary lemmas. 
Section \ref{poi.} is devoted to the 
proof of the main Koll\'ar-Enoki type injectivity theorems on compact \ka manifolds. We will give the proof of Theorem \ref{chunle} at first and then 
generalize Theorem \ref{chunle} to Theorem \ref{f-thm5.1} by applying the deep method provided by O. Fujino and S. Matsumura in \cite{fujino-osaka,FM,matsumura4,matsumura7}. 
We prove all the vanishing theorems in Section \ref{pov.}, in which a detailed proof of Theorem \ref{ND} and a short proof of Theorem \ref{thm2} will be given by the transcendental method based on the theory of harmonic integrals on complete noncompact  K\"ahler manifolds. 

\begin{ack}
The author would like to thank the referee
for carefully reading the paper and for valuable suggestions.
\end{ack}

\section{Preliminaries}\label{pre.}
In this section, we collect some basic definitions and results from complex analytic and differential geometry. For details, see, for example, \cite{demailly,demailly-note}.\\

\noindent\textbf{2.1. Positivity of vector bundles.} 
Let $E$ be a holomorphic vector bundle of rank $r$ over a complex
manifold $X$ and $h_E$ be a smooth hermitian metric on $E$. We know there exists a unique connection $\nabla$, called the Chern connection of
$(E,h_E)$, which is compatible with the metric $h_E$ and complex
structure on $E$. Let $\{z^i\}_{i=1}^n$ be the local holomorphic
coordinates on $X$ and $\{e_{\alpha}\}_{\alpha=1}^r$ be the local
holomorphic frames of $E$. Locally, the curvature tensor of $(E,h_E)$
takes the form
\begin{equation}
  \sqrt{-1} \Theta_{h_E}(E)=\sqrt{-1} R_{i\overline{j}\alpha}^\gamma dz^i\wedge d\overline{z}^j \otimes e^{\alpha}\otimes e_{\gamma}\nonumber
\end{equation}
where $R_{i\overline{j}\alpha}^\gamma=h^{\gamma\overline{\beta}}R_{i\overline{j}\alpha\overline{\beta}}$ and
$R_{i\overline{j}\alpha\overline{\beta}}=-\frac{\partial^2 h_{\alpha\overline{\beta}}}{\partial z^i\partial \overline{z}^j}
  +h^{\gamma\overline{\delta}}\frac{\partial h_{\alpha\overline{\delta}}}{\partial z^i}\frac{\partial h_{\gamma\overline{\beta}}}{\partial \overline{z}^j}.$
Here and henceforth we adopt the Einstein convention for summation.

\begin{defn} [\cite{demailly-note,SS}]
An hermitian vector bundle $(E,h_E)$ is said to be Griffiths-positive,
if for any nonzero vectors $u=u^i \frac{\partial}{\partial z^i}$ and $v=v^\alpha e_\alpha$,
$\sum_{i,j,\alpha,\beta}R_{i\overline{j}\alpha\overline{\beta}}u^i\overline{u}^j v^\alpha \overline{v}^\beta>0.$
$(E,h_E)$ is said to be Nakano-positive, if for any nonzero vector
$u=u^{i\alpha}\frac{\partial}{\partial z^i}\otimes e_\alpha$,
$\sum_{i,j,\alpha,\beta}R_{i\overline{j}\alpha\overline{\beta}}u^{i\alpha}\overline{u}^{j\beta}>0.$
\end{defn}

The notions of semi-positivity, negativity and
semi-negativity can be defined similarly. It is
clear that the Nakano positivity implies the Griffiths positivity and that
both concepts coincide if $r = 1$ (in the case of a line bundle, $E$
is merely said to be positive).
In \cite{demailly-dbar} Demailly introduced the notion of $m$-positivity
for any integer $1\leq m \leq r$ for a vector bundle $E$ of rank $r$
which interpolates between the Griffiths positivity and the Nakano positivity. We cited it as the Demailly $m$-positivity in this paper.
\begin{defn} [{\cite{demailly-dbar}}] \label{def.44}
Let $(E,h_E)$ be a hermitian vector bundle over a complex manifold $X$. A tensor $u\in TX\otimes E$ is called of rank $m$
if $m$ is the smallest non-negative integer
such that $u$ can be written as
$u=\sum^{m}_{j=1}\xi^{j}\otimes \upsilon^{j},\quad \xi^{j}\in TX,~~\upsilon^{j}\in E.$
$E$ is said to be Demailly $m$-positive (resp. $m$-semi-positve),
denoted by $E>_m 0$ (resp. $E\geq_m 0$),
if the hermitian form $\sqrt{-1}\Theta_{h_E}(E)(u,u)>0$ (resp. $\geq0$)
for any nonzero $u\in TX\otimes E$
of rank $\leq m$.
\end{defn}

It is obvious that the Demailly $1$-positivity is just the
Griffths-positivity and the Demailly $m$-positivity for
$m\geq \min\{r,n\}$ is exactly the Nakano-positivity (cf. \cite{demailly-note}, page 339). Here we abuse a little bit the notation and denote also
$\sqrt{-1}\Theta_{h_E}(E)$ to be the hermitian form associated
to the Chern curvature.\\

\noindent\textbf{2.2. Singular hermitian metrics and multiplier ideal sheaves.} 
Next let us recall the definition of singular hermitian metrics and its multiplier ideal sheaves. For the details, we recommend the reader to see \cite{demailly}. Let $F$ be a holomorphic line bundle
on a complex manifold $X$.
\begin{defn}\label{f-def2.1}
A singular hermitian metric
on $F$ is a metric $h_F$ which is given in
every trivialization $\theta: F|_{\Omega}\simeq \Omega\times
\mathbb C$ by $| \xi |_{h_F} =|\theta(\xi)|e^{-\varphi} \text{ on } \Omega,$
where $\xi$ is a section of $F$ on $\Omega$ and
$\varphi \in L^1_{\mathrm{loc}}(\Omega)$
is an arbitrary function.
Here $L^1_{\mathrm{loc}}(\Omega)$ is the space
of locally integrable functions on $\Omega$.
We usually call $\varphi$
the weight function of the
metric with respect to the trivialization $\theta$.
The curvature current of a singular hermitian metric $h_F$ is
defined by $\sqrt{-1}\Theta_{h_{F}}(F):=2\sqrt{-1}\partial\overline{\partial} \varphi,$
where $\varphi$ is a weight function and
$\partial\overline{\partial} \varphi$ is taken in the sense of distributions.
It is easy to see that the right hand side does not depend on
the choice of trivializations (cf. \cite{demailly}).
\end{defn}

\begin{defn}
A holomorphic line bundle $F$ is said to be pseudo-effective if $F$ admits a singular hermitian metric $h_F$ with semi-positive curvature current.
\end{defn}

The notion of multiplier ideal sheaves introduced by Nadel in \cite{Nadel}
is very important in the recent developments of complex geometry and algebraic geometry.

\begin{defn}\label{f-def2.2}
A quasi-plurisubharmonic function by definition is a function $\varphi$ which is locally equal to the sum of a plurisubharmonic function and of a smooth function. If $\varphi$ is a quasi-plurisubharmonic function on a complex manifold $X$, then the multiplier ideal sheaf $\mathcal J(\varphi)\subset \mathcal O_X$ is defined by
\begin{equation*}
\Gamma (U, \mathcal J(\varphi))
:=\{f\in \mathcal O_X(U)\, |\, |f|^2e^{-2\varphi}\in
L^1_{\mathrm{loc}}(U) \}
\end{equation*}
for every open set $U\subset X$.
Then it is known that
$\mathcal J(\varphi)$ is a coherent ideal sheaf of $\mathcal O_X$
(see \cite[(5.7) Lemma]{demailly} for example).
\end{defn}

\begin{defn} \label{f-def2.3}
Let $F$ be a holomorphic line bundle over a complex manifold $X$ and
let $h_F$ be a singular hermitian metric on $F$.
We assume $\sqrt{-1}\Theta_{h_F}(F)\geq \gamma$
for some smooth $(1, 1)$-form $\gamma$ on $X$.
We fix a smooth hermitian metric $h_{\infty}$ on $F$.
Then we can write $h_{F}=h_{\infty} e^{-2\psi}$ for some
$\psi \in L^1_{\mathrm{loc}}(X)$ and $\psi$ coincides with a quasi-plurisubharmonic function $\varphi$ on $X$ almost everywhere.
In this situation, we put $\mathcal J(h_{F}):=\mathcal J(\varphi)$.
We note that $\mathcal J(h_{F})$ is independent of
$h_{\infty}$ and is thus well-defined.
\end{defn}

\noindent\textbf{2.3. Equisingular approximations.}    
The following Lemma \ref{equi.} is the well-known Demailly-Peternell-Schneider equisingular approximation theorem, which is frequently used in this paper. For details, see \cite[Theorem 2.3]{dps} and \cite[Theorem 2.3]{matsumura4}.
\begin{lem}\label{equi.}
Let $F$ be a holomorphic line bundle on a compact \ka manifold $(X,\omega)$ with a singular hermitian metric $h$ with semi-positive curvature current. Then exists a countable family $\{h_{\e} \}_{1\gg \e>0}$ of singular hermitian metrics on $F$ with the following properties: 
\begin{itemize}
\item[(a)]$h_{\e}$ is smooth on $Y_{\e}:=X \setminus Z_{\e}$, 
where $Z_{\e}$ is a proper closed subvariety on $X$. 
\item[(b)]$h_{\e'} \leq h_{\e''} \leq h$ holds on $X$ 
when $\e' > \e'' > 0$.
\item[(c)]$\mathcal I({h})= \mathcal I({h_{\e}})$ on $X$.
\item[(d)]$\sqrt{-1} \Theta_{h_{\e}}(F) 
\geq a\sqrt{-1}\Theta_{h_{F}}(F) -\e \omega$ on $X$. 
\end{itemize}
\end{lem}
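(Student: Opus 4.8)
The plan is to construct the family $\{h_\e\}_{1\gg\e>0}$ by the Demailly--Bergman regularization of the weight of $h$ and then to verify (a)--(d) one at a time, the heart of the matter being the simultaneous control of the multiplier ideals in (c) and of the curvature in (d). First I would fix once and for all a smooth hermitian metric $h_\infty$ on $F$ and write $h=h_\infty e^{-2\psi}$ for a global quasi-plurisubharmonic weight $\psi$; by Definition \ref{f-def2.1} the semi-positivity $\sqrt{-1}\Theta_{h}(F)\ge 0$ reads $\theta+2\sqrt{-1}\partial\dbar\psi\ge 0$ in the sense of currents, where $\theta:=\sqrt{-1}\Theta_{h_\infty}(F)$ is smooth. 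Setting $h_\e:=h_\infty e^{-2\psi_\e}$, the whole statement is reduced to producing a family $\{\psi_\e\}$ of quasi-plurisubharmonic functions with analytic singularities that decreases to $\psi$ and that retains both its integrability and its curvature positivity.

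For the construction I would use the H\"ormander--Ohsawa--Takegoshi machinery at scales $m=m(\e)\to\infty$. For each $m$ let $\mathcal H_m$ be the Hilbert space of holomorphic sections of $mF$ that are $L^2$ against $e^{-2m\psi}$ and the fixed K\"ahler volume, and put $\psi_m:=\frac{1}{2m}\log\sum_k|g_{m,k}|^2$ for an orthonormal basis $\{g_{m,k}\}$, patched with $h_\infty$ into a global quasi-psh function. Each $\psi_m$ has analytic singularities along the proper analytic set $Z_m=\{\,g_{m,k}=0\ \text{for all }k\,\}$, which yields (a) with $Y_\e=X\setminus Z_\e$. The Ohsawa--Takegoshi extension theorem, applied to extend the jet of a section from a point, gives the lower Bergman bound $\psi_m\ge\psi-\tfrac{C}{m}$, while the sub-mean-value inequality for $\log\sum_k|g_{m,k}|^2$ gives a matching upper bound; hence $\psi_m\to\psi$, and after adding the $O(1/m)$ normalizing constants and extracting a subfamily that is decreasing (a standard modification used in \cite{dps}) I can arrange $\psi_{\e'}\ge\psi_{\e''}\ge\psi$ whenever $\e'>\e''>0$. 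Since $\psi_\e\ge\psi$ translates into $e^{-2\psi_\e}\le e^{-2\psi}$, this is exactly (b), i.e. $h_{\e'}\le h_{\e''}\le h$ on $X$.

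It remains to establish (c) and (d). For (c), the inclusion $\mathcal I(h)\subseteq\mathcal I(h_\e)$ is immediate from $\psi_\e\ge\psi$; the nontrivial inclusion $\mathcal I(h_\e)\subseteq\mathcal I(h)$ is precisely the equisingular refinement of the Demailly--Peternell--Schneider theorem \cite[Theorem 2.3]{dps}, and I would deduce it from the effective $L^2$ estimate built into the construction, which forces a germ that is $L^2$ against $e^{-2\psi_\e}$ to be $L^2$ against $e^{-2\psi}$ as well. For (d) I would invoke Demailly's curvature estimate for the regularized weights: because $m\psi_m$ is, up to the smooth background $\theta$, the logarithm of a sum of squared moduli of holomorphic sections, the current $\theta+2\sqrt{-1}\partial\dbar\psi_m$ retains the lower bound of $\sqrt{-1}\Theta_{h}(F)=\theta+2\sqrt{-1}\partial\dbar\psi$ up to an error of order $1/m$; choosing $m=m(\e)$ large and keeping the prescribed factor $a$ then gives $\sqrt{-1}\Theta_{h_\e}(F)\ge a\sqrt{-1}\Theta_{h_F}(F)-\e\omega$ on $X$, which is (d). The main obstacle is to secure (c) and (d) at the same time: the bare Bergman regularization readily delivers the curvature bound (d) together with the convergence $\mathcal I(\psi_\e)\to\mathcal I(\psi)$, but upgrading this to the exact equality $\mathcal I(h_\e)=\mathcal I(h)$ for \emph{every} $\e$ while keeping the family decreasing and the curvature controlled is the technical core, and there I would follow the argument of \cite[Theorem 2.3]{matsumura4}.
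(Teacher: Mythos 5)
Your plan has the right outline, but the construction you chose cannot deliver property (c), and (c) is the entire content of the lemma. First, a bibliographic point: the paper does not prove this statement at all — it is quoted from \cite[Theorem 2.3]{dps} (see also \cite[Theorem 2.3]{matsumura4}), so your appeal to ``the argument of \cite[Theorem 2.3]{matsumura4}'' for the technical core is circular: that reference \emph{is} the statement being proved, not a method that can be grafted onto a Bergman-kernel construction. Now the mathematical gap: the Ohsawa--Takegoshi estimate built into the Bergman regularization gives the lower bound $\psi_m \geq \psi - C/m$, i.e. $e^{-2\psi_m}\leq e^{2C/m}e^{-2\psi}$, and this proves only the \emph{trivial} inclusion $\mathcal I(h)\subseteq \mathcal I(h_{\e})$. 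The inclusion you actually need, $\mathcal I(h_{\e})\subseteq \mathcal I(h)$, would require an upper bound of the shape $\psi_m\leq \psi+O(1)$, and the sub-mean-value estimate $\psi_m(x)\leq \sup_{B(x,r)}\psi+\frac{1}{m}\log\frac{C}{r^n}$ is far too weak for that; your sentence asserting that the effective $L^2$ estimate ``forces a germ that is $L^2$ against $e^{-2\psi_\e}$ to be $L^2$ against $e^{-2\psi}$'' runs the inequality exactly backwards. Moreover this is not a fixable presentation issue: your $\psi_m$ have analytic singularities, with Lelong numbers controlled only by $\nu(\psi_m,x)\geq \nu(\psi,x)-n/m$, so at integrability thresholds $\mathcal I(\psi_m)$ can be strictly larger than $\mathcal I(\psi)$ for \emph{every} $m$; it is precisely for this reason that it is emphasized in \cite{dps} and in Matsumura's papers that equisingular approximations cannot, in general, be taken with analytic singularities (whether they can is a well-known open problem). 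So the route ``Bergman regularization $+$ a posteriori equisingularity'' attempts something stronger than the lemma and is expected to fail.

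The actual proof in \cite{dps} is structurally different: one does not replace the weight $\psi$ by Bergman weights, but regularizes $\psi$ itself by local convolutions on coordinate charts, glued by regularized maxima over a shrinking system of neighborhoods of the singular locus; the resulting $h_{\e}$ is smooth outside a subvariety $Z_{\e}$ but has transcendental (non-analytic) singularities along $Z_{\e}$, the monotonicity in (b) is arranged within the construction rather than by extracting a subsequence via subadditivity, the loss $-\e\omega$ in (d) absorbs the gluing errors (in the normalization of the present paper, the fixed lower bound being preserved is $\gamma=a\sqrt{-1}\Theta_{h_L}(L)$ from the hypotheses of Theorems \ref{chunle} and \ref{f-thm5.1}), and equisingularity (c) comes from keeping the original singularities of $\psi$ throughout, which your construction discards at the first step. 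Your verifications of (a), (b) and (d) for the Bergman family are essentially correct as far as they go, but since that family cannot satisfy (c), the proposal does not prove the lemma.
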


\noindent\textbf{2.4. $L^2$ spaces and $L^2$ estimates.} 
Let $X$ be a complex manifold with a positive $(1,1)$-form $\omega$
and $E$ be a holomorphic vector bundle over $X$ with a smooth metric $h$. For $E$-valued $(p,q)$-forms $u$ and $v$,
the point-wise inner product
$\langle u, v\rangle _{h, \omega}$
can be defined, and
the global inner product
$\lla u, v \rra  _{h, \omega}$
can also be defined by
\begin{equation*}
\lla u, v \rra  _{h, \omega}:=
\int_{X}
\langle u, v\rangle _{h, \omega}\, dV_{\omega}
\end{equation*}
where $dV_{\omega}:= \omega^{n}/n!$ and $n$ is the dimension of $X$. Recall that
the Chern connection $D_{h}$ on $E$ determined by the holomorphic structure and
the hermitian metric $h$ can be written as
$D_{h} = D'_{h} + \bar\partial$ with the $(1,0)$-connection $D'_{h}$ and the
$(0,1)$-connection $\bar\partial$ (the $\bar\partial$-operator).
The connections $D'_{h}$ and $\bar\partial$
can be regarded as a densely defined closed operator on
the $L^{2}$-space $L_{(2)}^{p, q}(X, E)_{h, \omega}$ defined by
\begin{equation*}
L_{(2)}^{p, q}(X, E)_{h, \omega}:=
\{u \mid u \text{ is an }E\text{-valued }(p, q)\text{-form such that }
\|u \|_{h, \omega}< \infty \}.
\end{equation*}
The formal adjoints $D'^{*}_{h}$ and $ \bar\partial^{*}_{h}$ agree with
the Hilbert space adjoints in the sense of Von Neumann
if $\omega$ is a complete metric on $X$.
For the $L^2$-space $L^{p,q}_{(2)}(X, E)_{h, \omega}$
of $E$-valued $(p,q)$-forms on $X$ with respect to the inner product $\|\bullet \|_{h, \omega}$,
we define the $L^2$ cohomology $H^{p,q}_{(2)}(X, E)_{h, \omega}$ by
$$
H^{p,q}_{(2)}(X, E)_{h, \omega}:=
\frac{{\rm{Ker}}\, \bar\partial \cap L^{p,q}_{(2)}(X, F)_{h, \omega}}{{\rm{Im}\, \bar\partial}\cap L^{p,q}_{(2)}(X, F)_{h, \omega}}.
$$
Finally, we require the following very famous H\"{o}rmander $L^2$ estimates, which will be used in the proof of our vanishing theorems. 
\begin{lem}
[{\cite{AV01,demailly,Hormander}}] \label{L2} Let $(X,\omega)$
be a complete K\"{a}hler manifold. Let $(E,h)$ be an hermitian
vector bundle over $X$. Assume that
$A=[i\Theta_{h}(E),\Lambda_\omega]$ is positive definite everywhere
on $\Lambda^{p,q}T^{*}X\otimes E$, $q\geq1$. Then for any form $g\in
L^2(X,\Lambda^{p,q}T^{*}X\otimes E)$ satisfying $\overline{\partial}
g=0$ and $\int_X (A^{-1}g,g)dV_\omega <+\infty,$
 there exists $f\in L^2(X,\Lambda^{p,q-1}T^{*}X\otimes E)$ such that $\overline{\partial}f=g$ and
 \begin{equation}
   \int_X|f|^2 dV_\omega\leq \int_X (A^{-1}g,g)dV_\omega.\nonumber
 \end{equation}
\end{lem}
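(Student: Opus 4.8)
The plan is to deduce both the solvability of $\dbar f = g$ and the $L^2$ bound from the Bochner-Kodaira-Nakano identity combined with a standard duality argument in the Hilbert space $L^2_{(2)}(X,\Lambda^{p,q}T^*X\otimes E)_{h,\omega}$. Throughout, the completeness of $\omega$ is exactly what lets us pass from compactly supported forms to the full domains of the closed operators $\dbar$ and $\dbar^*_h$.

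First I would establish the \emph{a priori estimate}. On compactly supported smooth $E$-valued $(p,q)$-forms $u$ the Bochner-Kodaira-Nakano identity reads $\Delta''_h = \Delta'_h + [\sqrt{-1}\Theta_h(E),\Lambda_\omega]$, where $\Delta''_h = \dbar\dbar^*_h + \dbar^*_h\dbar$ and $\Delta'_h = D'_hD'^{*}_h + D'^{*}_hD'_h$. Pairing with $u$ and discarding the nonnegative term $\lla\Delta'_h u,u\rra = \|D'_h u\|^2+\|D'^{*}_h u\|^2$ yields
\begin{equation*}
\|\dbar u\|^2 + \|\dbar^*_h u\|^2 \geq \lla [\sqrt{-1}\Theta_h(E),\Lambda_\omega]u,u\rra = \lla Au,u\rra .
\end{equation*}
Since $\omega$ is complete, compactly supported smooth forms are dense in $\mathrm{Dom}(\dbar)\cap\mathrm{Dom}(\dbar^*_h)$ for the graph norm $u\mapsto \|u\|+\|\dbar u\|+\|\dbar^*_h u\|$ (Andreotti-Vesentini, via a cut-off argument with exhaustion functions of bounded gradient), so the inequality extends to every $u$ in that common domain.

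Next comes the duality step. Because $A$ is positive definite everywhere on $\Lambda^{p,q}T^*X\otimes E$ with $q\geq 1$, its self-adjoint square root $A^{1/2}$ and its inverse act pointwise. Given $u\in\mathrm{Dom}(\dbar^*_h)$, I decompose it orthogonally as $u=u_1+u_2$ with $u_1\in\Ker\dbar$ (a closed subspace) and $u_2\in(\Ker\dbar)^\perp\subset\Ker\dbar^*_h$; then $\dbar u_1=0$, $\dbar^*_h u=\dbar^*_h u_1$, and $u_1=u-u_2\in\mathrm{Dom}(\dbar)\cap\mathrm{Dom}(\dbar^*_h)$, so the estimate applies to $u_1$. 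Using $\dbar g=0$, the Cauchy-Schwarz inequality, and that estimate,
\begin{equation*}
|\lla g,u\rra|^2 = |\lla g,u_1\rra|^2 = |\lla A^{-1/2}g, A^{1/2}u_1\rra|^2 \leq \lla A^{-1}g,g\rra\,\lla Au_1,u_1\rra \leq \lla A^{-1}g,g\rra\,\|\dbar^*_h u\|^2 .
\end{equation*}
The hypothesis $\int_X(A^{-1}g,g)\,dV_\omega<+\infty$ makes the first factor finite, so the functional $\dbar^*_h u\mapsto\lla g,u\rra$ is well-defined on the image of $\dbar^*_h$ and bounded with norm at most $\lla A^{-1}g,g\rra^{1/2}$. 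By Hahn-Banach and Riesz representation there is $f\in L^2$ with $\|f\|^2\leq\lla A^{-1}g,g\rra$ and $\lla f,\dbar^*_h u\rra=\lla g,u\rra$ for all $u\in\mathrm{Dom}(\dbar^*_h)$, which says precisely that $\dbar f=g$ weakly, giving the claimed solution and bound.

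The main obstacle I expect is the extension of the Bochner-Kodaira-Nakano inequality from compactly supported forms to the full domain: one must check that $\dbar$ and $\dbar^*_h$ coincide with their formal adjoints as von Neumann adjoints and that the graph-norm density holds, both of which rest essentially on the completeness of $\omega$. Once this a priori estimate is secured, the positivity and invertibility of $A$ and the Hahn-Banach/Riesz packaging are routine.
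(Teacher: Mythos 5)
Your proposal is correct, and it coincides with the proof in the sources the paper itself relies on: the paper never proves this lemma but quotes it as a known preliminary result from Andreotti--Vesentini, H\"ormander, and Demailly, and your argument --- the Bochner--Kodaira--Nakano inequality extended to $\mathrm{Dom}(\overline{\partial})\cap\mathrm{Dom}(\overline{\partial}^{*}_h)$ via the completeness/density lemma, the orthogonal decomposition of $u$ against $\mathrm{Ker}\,\overline{\partial}$, and the Hahn--Banach/Riesz duality step --- is exactly the standard proof found there. The only points worth stating explicitly in a write-up are the inclusion $(\mathrm{Ker}\,\overline{\partial})^{\perp}\subset(\mathrm{Im}\,\overline{\partial})^{\perp}=\mathrm{Ker}\,\overline{\partial}^{*}_h$ (which justifies $\overline{\partial}^{*}_h u=\overline{\partial}^{*}_h u_1$) and the von Neumann identity $(\overline{\partial}^{*}_h)^{*}=\overline{\partial}$ used to read off $\overline{\partial}f=g$ from the Riesz representation, both of which you invoke correctly.
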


\section{Proof of injectivity theorems}\label{poi.}
\begin{thm}[=Theorem \ref{chunle}]\label{chunle.}
Let $L$ be a semi-positive holomorphic line bundle over a compact K\"ahler manifold $X$ with a smooth hermitian metric $h_L$ satisfying $\sqrt{-1}\Theta_{h_L} (L) \geq 0$. If $F$ $($resp.~$E$$)$ is a holomorphic line $($resp.~vector$)$ bundle over $X$ with a smooth hermitian 
metric $h_F$ $($resp.~ $h_E$$)$ such that 
\begin{enumerate}
\item $\sqrt{-1}\Theta_{h_F}(F) - a \sqrt{-1}\Theta_{h_L} (L) \geq 0$
\item $\sqrt{-1}\Theta_{h_E}(E)+(a-b)Id_E\otimes\sqrt{-1}\Theta_{h_L}(L)\geq_{Nak}0$\label{eq7} \,\,\text{in the sense of Nakano} \end{enumerate}
for some positive constants $a, b>0$, then for a nonzero section $s \in H^{0}(X, L)$ the multiplication map induced by $\otimes s$ 
\begin{equation*}
\times s:  
H^{q}(X, K_{X}\otimes E \otimes F ) 
\xrightarrow{} 
H^{q}(X, K_{X} \otimes E\otimes F \otimes L )
\end{equation*}
is injective for every $q\geq0$, where $K_X$ is the canonical line bundle of $X$. 
\end{thm}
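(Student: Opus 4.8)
The plan is to run the Enoki-type harmonic argument on the compact \ka manifold $(X,\omega)$ for a fixed Kähler form $\omega$, after first observing that the two numerical hypotheses combine into a single Nakano positivity statement for the twisted bundle $G:=E\otimes F$. Writing $\sqrt{-1}\Theta_{h_E\otimes h_F}(G)=\sqrt{-1}\Theta_{h_E}(E)+Id_E\otimes\sqrt{-1}\Theta_{h_F}(F)$ and feeding in hypothesis (1) in the form $Id_E\otimes\sqrt{-1}\Theta_{h_F}(F)\geq_{Nak} a\,Id_E\otimes\sqrt{-1}\Theta_{h_L}(L)$ together with hypothesis (2) rewritten as $\sqrt{-1}\Theta_{h_E}(E)\geq_{Nak}(b-a)\,Id_E\otimes\sqrt{-1}\Theta_{h_L}(L)$, I would obtain
\[
\sqrt{-1}\Theta_{h_E\otimes h_F}(G)\geq_{Nak} b\,Id_E\otimes\sqrt{-1}\Theta_{h_L}(L)\geq_{Nak}0 .
\]
This is the only place where both constants $a,b$ and the semi-positivity of $L$ enter, and it is precisely the surviving term $b\,Id_E\otimes\sqrt{-1}\Theta_{h_L}(L)$ that upgrades a mere vanishing statement into an injectivity statement.

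Next I would pass to harmonic representatives. Identifying $K_X\otimes G$-valued $(0,q)$-forms with $G$-valued $(n,q)$-forms, every class in $H^q(X,K_X\otimes E\otimes F)$ is represented by a unique $\bar\partial$-harmonic $G$-valued $(n,q)$-form $u$. Applying the Bochner--Kodaira--Nakano identity $\Delta_{\bar\partial}=\Delta_{D'}+[\sqrt{-1}\Theta(G),\Lambda_\omega]$ to $u$ and using $\Delta_{\bar\partial}u=0$ gives
\[
0=\|D'u\|^2+\|D'^{*}u\|^2+\lla[\sqrt{-1}\Theta(G),\Lambda_\omega]u,u\rra .
\]
Since $G$ is Nakano semi-positive the curvature pairing is non-negative, so every summand vanishes: $D'u=0$, $D'^{*}u=0$, and the curvature term is zero pointwise. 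Combining this with the lower bound of the previous paragraph and the semi-positivity of $\sqrt{-1}\Theta_{h_L}(L)$ forces the pointwise identity $[\sqrt{-1}\Theta_{h_L}(L),\Lambda_\omega]u=0$; as $u$ has bidegree $(n,q)$ one has $\sqrt{-1}\Theta_{h_L}(L)\wedge u=0$ for type reasons, so this identity is the same as $\sqrt{-1}\Theta_{h_L}(L)\wedge\Lambda_\omega u=0$.

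The core of the proof is to show that multiplication by $s$ carries $u$ to a harmonic form. Let $u$ now be the harmonic representative of a class annihilated by $\times s$, so that $su=\bar\partial v$ is $\bar\partial$-exact. Plainly $\bar\partial(su)=s\,\bar\partial u=0$. For the adjoint, the \ka identities on $K_X\otimes E\otimes F\otimes L$ give $\bar\partial^{*}(su)=\pm\sqrt{-1}\,(\Lambda_\omega u)\wedge D'_L s$, the term carrying $D'_G(\Lambda_\omega u)$ dropping out because $\bar\partial^{*}u=0$ forces $D'_G(\Lambda_\omega u)=0$. This expression need not vanish by itself, but applying $\bar\partial$ and using that $s$ is holomorphic, so that $\bar\partial(D'_L s)$ is a constant multiple of $\sqrt{-1}\Theta_{h_L}(L)\,s$, together with $\bar\partial\Lambda_\omega u=0$ (a consequence of $\bar\partial u=0$ and $D'^{*}u=0$), reduces $\bar\partial\bar\partial^{*}(su)$ to a multiple of $s\,\sqrt{-1}\Theta_{h_L}(L)\wedge\Lambda_\omega u$, which is zero by the previous paragraph. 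Hence $\|\bar\partial^{*}(su)\|^2=\lla su,\bar\partial\bar\partial^{*}(su)\rra=0$, so $su$ is harmonic. Being simultaneously harmonic and $\bar\partial$-exact, $su$ must vanish identically, and since $s\not\equiv 0$ this forces $u\equiv 0$ and the original class to be zero.

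I expect the main obstacle to be the computation in the third paragraph, namely verifying $\bar\partial\bar\partial^{*}(su)=0$. This is where the holomorphy of $s$, the \ka identities, and --- most crucially --- the pointwise curvature vanishing $\sqrt{-1}\Theta_{h_L}(L)\wedge\Lambda_\omega u=0$ coming from the quantitative Nakano bound must all be brought together. The cancellations rely throughout on the bidegree bookkeeping for forms that are top-degree in the holomorphic directions (so that $\sqrt{-1}\Theta\wedge u=0$ and $D'(su)=0$ for type reasons), and keeping track of the signs and of the $\Lambda_\omega$-commutators in these identities is the only genuinely delicate part.
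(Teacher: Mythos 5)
Your proof is correct, and it follows the paper's skeleton up to the decisive step: both reduce to harmonic representatives, both combine hypotheses (1) and (2) into the single bound $\sqrt{-1}\Theta_{h_E\otimes h_F}(E\otimes F)\geq_{Nak}b\,Id_E\otimes\sqrt{-1}\Theta_{h_L}(L)\geq_{Nak}0$, and both prove injectivity by showing that $su$ is again harmonic. Where you diverge is in how $\bar\partial^{*}(su)=0$ is obtained. The paper applies the Nakano identity a second time, now to $su$: since $\nabla'^{*}(su)=s\,\nabla'^{*}u=0$, it gets $\|\bar\partial^{*}(su)\|^{2}=\langle\langle\sqrt{-1}\Theta(E\otimes F\otimes L)\Lambda_{\omega}su,su\rangle\rangle$, and then dominates this curvature term by $(1+\frac{1}{b})|s|^{2}_{h_L}\langle\sqrt{-1}\Theta(E\otimes F)\Lambda_{\omega}u,u\rangle=0$, using the operator inequality $\sqrt{-1}\Theta(E\otimes F\otimes L)\leq_{Nak}(1+\frac{1}{b})\sqrt{-1}\Theta(E\otimes F)$; that domination is where the constant $b$ enters. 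You instead use $b$ to upgrade the vanishing of the curvature pairing against $u$ to the pointwise operator identity $[\sqrt{-1}\Theta_{h_L}(L),\Lambda_{\omega}]u=0$ (legitimate: a semi-positive hermitian operator with vanishing expectation annihilates the vector), then compute $\bar\partial^{*}(su)=\pm\sqrt{-1}\,D'_{L}s\wedge\Lambda_{\omega}u$ from the K\"ahler identities (using $D'\Lambda_\omega u=0$, which is equivalent to $\bar\partial^{*}u=0$ in bidegree $(n,q)$), and kill it by checking that $\bar\partial\bar\partial^{*}(su)$ is proportional to $s\,\sqrt{-1}\Theta_{h_L}(L)\wedge\Lambda_{\omega}u=0$ and integrating by parts on the compact $X$. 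Both routes are sound. What the paper's version buys is robustness: it needs only integral estimates and a fixed curvature comparison, which is why essentially the same argument survives, with $\varepsilon$-error terms, in the singular-metric generalization (Theorem \ref{f-thm5.1}, proved via Lemma \ref{f-prop5.8} on the complete noncompact $Y_{\varepsilon}$). Your version gives a sharper pointwise explanation of why $su$ is harmonic, but it leans on exact pointwise identities and on integration by parts for smooth forms on a compact manifold, both of which degrade under the equisingular approximation, so it would not extend as directly to the multiplier-ideal setting.
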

\begin{proof}
Let $\omega$ be a fixed \ka metric on $X$ and $n=\dim X$.
For simplicity, we denote $h_{E\otimes F}=h_E \otimes h_F$ and $h_{E\otimes F\otimes L}=h_E \otimes h_F\otimes h_L$.
By the Hodge theory it is enough to show the map
\begin{equation} \label{yj90}
  \times s: \mathcal{H}^{n,q}(X, E\otimes F)\rightarrow \mathcal{H}^{n,q}(X, E\otimes F\otimes L)
\end{equation}
is injective for every $q\geq0$, where $\mathcal{H}^{n,q}(X, E\otimes F)$ is space of
$E\otimes F$-valued forms $u$ such that $u$ is
harmonic with respect to the metrics $\omega$ and $h_{E\otimes F}$,
and the same for $\mathcal{H}^{n,q}(X, E\otimes F\otimes G)$.

We will show below the map $($\ref{yj90}$)$ is well-defined,
from which the injectivity is obvious to see. In fact, 
for any $u\in \mathcal{H}^{n,q}(X, E\otimes F)$
we have $\Delta''_{E\otimes F,h_{E\otimes F}}u=0$,
where $$\Delta''_{E\otimes F,h_{E\otimes F}}=
\nabla''_{E\otimes F}\nabla''^{*}_{E\otimes F,h_{E\otimes F}}+
\nabla''^{*}_{E\otimes F,h_{E\otimes F}}\nabla''_{E\otimes F}$$
is the complex Laplace-Beltrami operator of $\nabla''_{E\otimes F}$.
By the Nakano identity
\begin{equation}
\begin{split}
  &\|\nabla''_{E\otimes F}u\|_{\omega,h_{E\otimes F}}^2+
  \|\nabla_{E\otimes F,h_{E\otimes F}}^{''*}u\|_{\omega,h_{E\otimes F}}^2\\
  &=\|\nabla'^{*}_{E\otimes F}u\|_{\omega,h_{E\otimes F}}^2+
  \langle\langle  \sqrt{-1}\Theta_{h_{E\otimes F}}(E\otimes F)
  \Lambda_\omega u,u  \rangle\rangle_{\omega,h_{E\otimes F}}.\nonumber
\end{split}
\end{equation}
We note that $\Delta''_{E\otimes F,h_{E\otimes F}}u=0\Leftrightarrow
\nabla''_{E\otimes F}u=\nabla''^{*}_{E\otimes F,h_{E\otimes F}}u=0$.
Thus for any $u\in \mathcal{H}^{n,q}(X,E\otimes F)$ we have
\begin{equation}\label{711_1}
  0=\|\nabla'^{*}_{E\otimes F}u\|_{\omega,h_{E\otimes F}}^2+
  \langle\langle  \sqrt{-1}\Theta_{h_{E\otimes F}}(E\otimes F)
  \Lambda_\omega u,u  \rangle\rangle_{\omega,h_{E\otimes F}}.\end{equation}
But 
\begin{align*}
&\sqrt{-1}\Theta_{h_{E\otimes F}}(E\otimes F)\\
&=\sqrt{-1}\Theta_{h_E}(E)+Id_E\otimes\sqrt{-1}\Theta_{h_F}(F)\\
&\geq_{Nak}\sqrt{-1}\Theta_{h_E}(E)+Id_E\otimes a\sqrt{-1}\Theta_{h_L}(L)\\
&=\sqrt{-1}\Theta_{h_E}(E)+Id_E\otimes ((a-b)\sqrt{-1}\Theta_{h_L}(L)+b\sqrt{-1}\Theta_{h_L}(L))\\
&=\sqrt{-1}\Theta_{h_E}(E)+(a-b)Id_E\otimes \sqrt{-1}\Theta_{h_L}(L)+bId_E\otimes\sqrt{-1}\Theta_{h_L}(L)\\
&\geq_{Nak}0
\end{align*}
which means that $E\otimes F$ is Nakano semi-positive. It follows that 
the curvature operator $$[\sqrt{-1}\Theta_{h_{E\otimes F}}(E\otimes F),\Lambda_\omega]u=\sqrt{-1}\Theta_{h_{E\otimes F}}(E\otimes F)\Lambda_\omega u$$ is semi-positive for any $(n,q)$-forms $u$ on $X$.
Thus we obtain  
$$\|\nabla'^{*}_{E\otimes F}u\|_{\omega,h_{E\otimes F}}^2=
\langle\langle  \sqrt{-1}\Theta_{h_{E\otimes F}}(E\otimes F)
\Lambda_\omega u,u  \rangle\rangle_{\omega,h_{E\otimes F}}=0$$
by equation $($\ref{711_1}$)$.
It follows that
\begin{equation} \label{ikhg.1}
  \nabla'^{*}_{E\otimes F}u=\langle  \sqrt{-1}\Theta_{h_{E\otimes F}}(E\otimes F)\Lambda_\omega u,u  \rangle_{\omega,h_{E\otimes F}}=0
\end{equation}
where $\langle \bullet  \rangle_{\omega,h_{E\otimes F}}$
means the pointwise inner product on $X$ with respect to $\omega$ and $h_{E\otimes F}$.
Therefore 
\begin{equation}\label{7676.1}
  {\nabla'^{*}}_{E\otimes F\otimes L}(s u)=-*\nabla''_{E\otimes F\otimes L}*(su)=s\nabla'^{*}_{E\otimes F}u=0
\end{equation}
since $s$ is a holomorphic $L$-valued (0,0)-form,
where $*$ is the Hodge star operator with respect to the metric $\omega$.
By the Nakano identity again
\begin{equation}
\begin{split}
&\|\nabla''_{E\otimes F\otimes L}su\|_{\omega,h_{E\otimes F\otimes L}}^2+
  \|\nabla_{E\otimes F\otimes L,h_{E\otimes F\otimes L}}^{''*}su\|_{\omega,h_{E\otimes F\otimes L}}^2\\
&=\|\nabla'^{*}_{E\otimes F\otimes L}su\|_{\omega,h_{E\otimes F\otimes L}}^2+
  \langle\langle  \sqrt{-1}\Theta_{h_{E\otimes F\otimes L}}(E\otimes F\otimes L)
  \Lambda_\omega su,su  \rangle\rangle_{\omega,h_{E\otimes F\otimes L}}.\nonumber
\end{split}
\end{equation}
$\nabla''_{E\otimes F\otimes L}su=0$ by the Leibnitz rule,
since $s$ is holomorphic and $u$ is harmonic.
It follows that
\begin{equation}
 \|\nabla_{E\otimes F\otimes L,h_{E\otimes F\otimes L}}^{''*}su\|_{\omega,h_{E\otimes F\otimes L}}^2
 =\langle\langle  \sqrt{-1}\Theta_{h_{E\otimes F\otimes L}}(E\otimes F\otimes L)
  \Lambda_\omega su,su  \rangle\rangle_{\omega,h_{E\otimes F\otimes L}}.\nonumber
\end{equation}
On the other hand, we compute 
\begin{align*}
&\sqrt{-1}\Theta_{h_{E\otimes F}}(E\otimes F)\\
&=\sqrt{-1}\Theta_{h_E}(E)+Id_E\otimes\sqrt{-1}\Theta_{h_F}(F)\\
&\geq_{Nak}\sqrt{-1}\Theta_{h_E}(E)+Id_E\otimes a\sqrt{-1}\Theta_{h_L}(L)\\
&=\sqrt{-1}\Theta_{h_E}(E)+Id_E\otimes ((a-b)\sqrt{-1}\Theta_{h_L}(L)+b\sqrt{-1}\Theta_{h_L}(L))\\
&=\sqrt{-1}\Theta_{h_E}(E)+(a-b)Id_E\otimes \sqrt{-1}\Theta_{h_L}(L)+bId_E\otimes\sqrt{-1}\Theta_{h_L}(L)\\
&\geq_{Nak}bId_E\otimes\sqrt{-1}\Theta_{h_L}(L)
\end{align*}
that is, $$Id_E\otimes\sqrt{-1}\Theta_{h_L}(L)\leq_{Nak}\frac{1}{b}\sqrt{-1}\Theta_{h_{E\otimes F}}(E\otimes F).$$
It follows that  
\begin{align*}
&\sqrt{-1}\Theta_{h_{E\otimes F\otimes L}}(E\otimes F\otimes L)\\
&=\sqrt{-1}\Theta_{h_{E\otimes F}}(E\otimes F)+Id_E\otimes\sqrt{-1}\Theta_{h_{L}}(L)\\
&\leq_{Nak}(1+\frac{1}{b})\sqrt{-1}\Theta_{h_{E\otimes F}}(E\otimes F)
\end{align*}
Therefore, by equation $($\ref{ikhg.1}$)$ we have
\begin{align*}
&\langle \sqrt{-1}\Theta_{h_{E\otimes F\otimes L}}(E\otimes F\otimes L)\Lambda_\omega su,su  \rangle_{\omega,h_{E\otimes F\otimes L}}\\
&\leq  (1+\frac{1}{b})\langle\sqrt{-1}\Theta_{h_{E\otimes F}}(E\otimes F)\Lambda_\omega su,su  \rangle_{\omega,h_{E\otimes F\otimes L}}\\
&=(1+\frac{1}{b})|s|^2_{h_L}\langle \sqrt{-1}\Theta_{h_{E\otimes F}}(E\otimes F)\Lambda_\omega u,u  \rangle_{\omega,h_{E\otimes F}}\\
&=0 \nonumber
\end{align*}
and 
\begin{equation}
 \|\nabla_{E\otimes F\otimes L,h_{E\otimes F\otimes L}}^{''*}su\|_{\omega,h_{E\otimes F\otimes L}}^2
 =\langle\langle  \sqrt{-1}\Theta_{h_{E\otimes F\otimes L}}(E\otimes F\otimes L)
  \Lambda_\omega su,su  \rangle\rangle_{\omega,h_{E\otimes F\otimes L}}\leq0.\nonumber
\end{equation}
This means that 
$$\|\nabla_{E\otimes F\otimes L,h_{E\otimes F\otimes L}}^{''*}su\|_{\omega,h_{E\otimes F\otimes L}}^2=0\,\,\text{and}\,\,\nabla_{E\otimes F\otimes L,h_{E\otimes F\otimes L}}^{''*}su=0.$$
Recall that $\nabla''_{E\otimes F\otimes G}su=0$. Thus we conclude that 
the $E\otimes F\otimes L$-valued form $su$ is harmonic
with respect to the metrics $\omega$ and $h_{E\otimes F\otimes L}$, that is, $su\in \mathcal{H}^{n,q}(X, E\otimes F\otimes L).$ This means that the map (\ref{yj90}) is well-defined. The proof is finished.
\end{proof}
\begin{cor}[=Corollary \ref{en}]\label{en.}
Let $L$ be a semi-positive line bundle over a compact K\"ahler manifold $X$. Then for a nonzero section $s \in H^{0}(X, L^l)$ 
the multiplication map induced by $\otimes s$
\begin{equation}
  \times s: H^q(X,K_X\otimes L^k)\rightarrow H^q(X,K_X\otimes L^{l+k})\nonumber
\end{equation}
is injective for any $k,l\geq1$ and $q\geq0$.
\end{cor}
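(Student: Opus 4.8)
The plan is to deduce Corollary \ref{en.} directly from Theorem \ref{chunle.} by making the right identification of the bundles. Since the distinguished section lives in $H^0(X,L^l)$, I would let the semi-positive line bundle ``$L$'' of Theorem \ref{chunle.} be played by $L^l$, equipped with the metric $h_L^{\otimes l}$, so that $\sqrt{-1}\Theta_{h_L^{\otimes l}}(L^l)=l\sqrt{-1}\Theta_{h_L}(L)\geq 0$ and the given $s$ becomes a nonzero holomorphic section of this bundle. The twisting line bundle ``$F$'' would be taken to be $L^k$ with the metric $h_L^{\otimes k}$, and the auxiliary vector bundle ``$E$'' would be the trivial line bundle $\mathcal O_X$ endowed with the flat metric, whose Chern curvature vanishes identically. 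With these choices the source and target of the multiplication map in Theorem \ref{chunle.} become $H^q(X,K_X\otimes L^k)$ and $H^q(X,K_X\otimes L^{k+l})$, exactly as required.

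It then remains to verify the two curvature hypotheses for a suitable pair of positive constants. Writing $\Theta:=\sqrt{-1}\Theta_{h_L}(L)\geq 0$, condition (1) reads $k\Theta-a\,l\,\Theta=(k-al)\Theta\geq 0$, which holds as soon as $a\leq k/l$; so I would simply set $a=k/l>0$. For condition (2), since $\mathcal O_X$ has vanishing curvature and Nakano semi-positivity coincides with ordinary semi-positivity in rank one, the left-hand side collapses to $(a-b)\,l\,\Theta$, and this is $\geq_{Nak}0$ precisely when $b\leq a$; I would choose any $b$ with $0<b\leq a=k/l$. Both constants are strictly positive because $k,l\geq 1$, so Theorem \ref{chunle.} applies and yields the injectivity of $\times s$ for every $q\geq 0$.

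There is no serious obstacle here: the content is entirely in the bookkeeping of matching tensor powers and in the observation that the two positivity inequalities, once the bundles are chosen, reduce to scalar inequalities among $k$, $l$, $a$, $b$ against the single semi-positive form $\Theta$. The only point meriting care is to keep $a$ and $b$ simultaneously positive while satisfying $k-al\geq 0$ and $a-b\geq 0$; this is automatic from $k,l\geq 1$, so the reduction to Theorem \ref{chunle.} goes through cleanly.
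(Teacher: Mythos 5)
Your proposal is correct and follows essentially the same route as the paper: both reduce to Theorem \ref{chunle.} by taking $E$ to be the trivial line bundle, $F=L^k$, and the theorem's semi-positive line bundle to be $L^l$, with the curvature hypotheses reducing to scalar inequalities against $\sqrt{-1}\Theta_{h_L}(L)\geq 0$. The only (immaterial) difference is your explicit choice $a=k/l$ and $0<b\leq a$, where the paper simply takes small positive constants with $a=b$.
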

\begin{proof}
Let $E$ be the trivial line bundle on $X$. For the semi-positive line bundle $L$ we set $F=L^k$ and $L'=L^{l}$. Then the conditions (1) and (2) in Theorem \ref{chunle.} are easy to check for small positive constants $a>0,b>0$ with $a=b$. By Theorem \ref{chunle.} we know that 
for a nonzero section $s \in H^{0}(X, L')$ the multiplication map induced by $\otimes s$ 
\begin{equation*}
\times s:  
H^{q}(X, K_{X}\otimes E \otimes F ) 
\xrightarrow{} 
H^{q}(X, K_{X} \otimes E\otimes F \otimes L' )
\end{equation*}
that is, 
\begin{equation*}
\times s:  
H^{q}(X, K_{X}\otimes L^k ) 
\xrightarrow{} 
H^{q}(X, K_{X}\otimes L^{l+k} )
\end{equation*}
is injective for every $q\geq0$.
\end{proof}
\begin{cor}[=Corollary \ref{en_}]\label{en_.}
Let $L$ $($resp.~ $E$$)$ be a semi-positive line bundle $($resp.~ a Nakano semi-positive vector bundle$)$ over a compact K\"ahler manifold $X$. Then for a nonzero section $s \in H^{0}(X, L^l)$ 
the multiplication map induced by $\otimes s$
\begin{equation}
  \times s: H^q(X,K_X\otimes E\otimes L^k)\rightarrow H^q(X,K_X\otimes E \otimes L^{l+k})\nonumber
\end{equation}
is injective for any $k,l\geq1$ and $q\geq0$.
\end{cor}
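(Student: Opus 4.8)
The plan is to deduce the corollary directly from Theorem~\ref{chunle.} by specializing the bundles and metrics, exactly as in the proof of Corollary~\ref{en.}. The only feature distinguishing this from Corollary~\ref{en.} is that here $E$ is a genuine Nakano semi-positive vector bundle rather than the trivial line bundle, so the point to watch is that hypothesis (2) of Theorem~\ref{chunle.}, which is a Nakano semi-positivity condition, continues to hold.

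Concretely, I would let the ``semi-positive line bundle'' of Theorem~\ref{chunle.} be $L':=L^l$, equipped with the induced metric $h_{L'}:=h_L^{\otimes l}$ so that $\sqrt{-1}\Theta_{h_{L'}}(L')=l\,\sqrt{-1}\Theta_{h_L}(L)\geq 0$, and take the twisting line bundle to be $F:=L^k$ with metric $h_F:=h_L^{\otimes k}$, whose curvature is $\sqrt{-1}\Theta_{h_F}(F)=k\,\sqrt{-1}\Theta_{h_L}(L)$. The vector bundle $E$ and its Nakano semi-positive metric $h_E$ are left untouched. With these choices the source and target of the multiplication map furnished by Theorem~\ref{chunle.} become $H^q(X,K_X\otimes E\otimes L^k)$ and $H^q(X,K_X\otimes E\otimes L^k\otimes L^l)=H^q(X,K_X\otimes E\otimes L^{l+k})$, which are precisely the groups in the statement.

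It then remains only to verify the two curvature hypotheses for a suitable choice of constants $a,b>0$. I would take $a=b$ small, say $0<a=b\leq k/l$ (possible since $k,l\geq 1$). For hypothesis (1) one computes
\[
\sqrt{-1}\Theta_{h_F}(F)-a\,\sqrt{-1}\Theta_{h_{L'}}(L')=(k-al)\,\sqrt{-1}\Theta_{h_L}(L)\geq 0,
\]
which holds because $k-al\geq 0$ and $L$ is semi-positive. For hypothesis (2) the choice $a=b$ annihilates the term $(a-b)\,Id_E\otimes\sqrt{-1}\Theta_{h_{L'}}(L')$, so the condition collapses to $\sqrt{-1}\Theta_{h_E}(E)\geq_{Nak}0$, which is exactly the assumed Nakano semi-positivity of $(E,h_E)$.

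With both hypotheses satisfied, Theorem~\ref{chunle.} immediately gives the injectivity of $\times s$ for the section $s\in H^0(X,L^l)=H^0(X,L')$, finishing the argument. There is no substantial obstacle: the result is a mechanical specialization of Theorem~\ref{chunle.}. The only step demanding a moment's thought is keeping the Nakano semi-positivity in hypothesis (2) intact, and choosing $a=b$ removes the awkward mixed term $(a-b)\,Id_E\otimes\sqrt{-1}\Theta_{h_L}(L)$ altogether, reducing hypothesis (2) to the bare Nakano semi-positivity of $E$.
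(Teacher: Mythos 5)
Your proposal is correct and follows exactly the paper's own proof: the paper likewise sets $F=L^k$, $L'=L^l$, chooses small constants with $a=b$, and applies Theorem~\ref{chunle.}. You have merely made explicit the verification (the choice $0<a=b\leq k/l$ and the computation $(k-al)\,\sqrt{-1}\Theta_{h_L}(L)\geq 0$) that the paper dismisses as ``easy to check.''
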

\begin{proof}
For a semi-positive line bundle $L$ and a Nakano semi-positive vector bundle $E$ on $X$ we let $F=L^k$ and $L'=L^{l}$. Then the conditions (1) and (2) in Theorem \ref{chunle.} are easy to check for small positive constants $a>0,b>0$ with $a=b$. By Theorem \ref{chunle.} we know that 
for a nonzero section $s \in H^{0}(X, L')$ the multiplication map induced by $\otimes s$ 
\begin{equation*}
\times s:  
H^{q}(X, K_{X}\otimes E \otimes F ) 
\xrightarrow{} 
H^{q}(X, K_{X} \otimes E\otimes F \otimes L' )
\end{equation*}
that is, 
\begin{equation}
  \times s: H^q(X,K_X\otimes E\otimes L^k)\rightarrow H^q(X,K_X\otimes E \otimes L^{l+k})\nonumber
\end{equation}
is injective for any $k,l\geq1$ and $q\geq0$.
\end{proof}
\begin{thm}[=Theorem \ref{f-thm5.1}]\label{f-thm5.1.}
Let $L$ be a semi-positive holomorphic line bundle over a compact K\"ahler manifold $X$ with a smooth hermitian metric $h_L$ satisfying $\sqrt{-1}\Theta_{h_L} (L) \geq 0$.
If $F$ $($resp.~$E$$)$ is a holomorphic line $($resp.~vector$)$ bundle over $X$ with a singular hermitian 
metric $h$ $($resp.~a smooth hermitian metric $h_E$$)$ such that 
\begin{enumerate}
\item $\sqrt{-1}\Theta_{h}(F) - a \sqrt{-1}\Theta_{h_L} (L) \geq 0$ \,\,\text{in the sense of currents} 
\item $\sqrt{-1}\Theta_{h_E}(E)+(a-b)Id_E\otimes\sqrt{-1}\Theta_{h_L}(L)\geq_{Nak}0$\label{eq7} \,\,\text{in the sense of Nakano} \end{enumerate}
for some positive constants $a, b>0$, then for a nonzero section $s \in H^{0}(X, L)$ the multiplication map induced by $\otimes s$ 
\begin{equation*}
\times s:  
H^{q}(X, K_{X}\otimes E \otimes F \otimes \mathcal I(h)) 
\xrightarrow{} 
H^{q}(X, K_{X} \otimes E\otimes F \otimes \mathcal I(h) \otimes L )
\end{equation*}
is injective for every $q\geq0$, where 
$\mathcal I(h)$ is the multiplier ideal sheaf of $h$. 
\end{thm}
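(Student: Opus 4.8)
The plan is to reduce the statement to the smooth situation already settled in Theorem \ref{chunle.} by running the harmonic-theoretic argument on the Zariski-open locus where the singular metric is smooth, and then passing to the limit along the Demailly--Peternell--Schneider equisingular approximations. First I would absorb the constant $a$ into the metric on $F$: the metric $\tilde h:=h\otimes h_L^{-a}$ is a singular hermitian metric on $F$ whose curvature current $\sqrt{-1}\Theta_{\tilde h}(F)=\sqrt{-1}\Theta_{h}(F)-a\sqrt{-1}\Theta_{h_L}(L)\geq 0$ is semi-positive by hypothesis (1), and which has the same multiplier ideal sheaf $\mathcal I(\tilde h)=\mathcal I(h)$ since $\tilde h$ and $h$ differ by the smooth factor $h_L^{-a}$. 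Applying Lemma \ref{equi.} to $(F,\tilde h)$ produces a family $\{\tilde h_\e\}_{1\gg\e>0}$ of singular metrics, smooth on $Y_\e=X\setminus Z_\e$, increasing to $\tilde h$ as $\e\downarrow0$, with $\mathcal I(\tilde h_\e)=\mathcal I(h)$ and $\sqrt{-1}\Theta_{\tilde h_\e}(F)\geq-\e\omega$ by part (d). Twisting back, $h_\e:=\tilde h_\e\otimes h_L^{a}$ is a metric on $F$, smooth on $Y_\e$, with $\mathcal I(h_\e)=\mathcal I(h)$ and
\begin{equation*}
\sqrt{-1}\Theta_{h_\e}(F)\geq a\sqrt{-1}\Theta_{h_L}(L)-\e\omega .
\end{equation*}

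Next I would equip $Y_\e$ with a complete \ka metric (for instance $\omega+\delta\,\omega_{Y_\e}$ with $\omega_{Y_\e}$ complete on $Y_\e$ and $\delta\downarrow0$), so that the formal adjoints agree with the Hilbert-space adjoints and the $L^2$ Dolbeault cohomology of $K_X\otimes E\otimes F$ with respect to $(h_E\otimes h_\e,\omega)$ computes $H^q(X,K_X\otimes E\otimes F\otimes\mathcal I(h))$ via the standard $L^2$ representation with multiplier-ideal coefficients. On $Y_\e$ all metrics are smooth, so I can repeat the curvature computation of Theorem \ref{chunle.}: using hypothesis (2) and the lower bound on $\sqrt{-1}\Theta_{h_\e}(F)$, one obtains, up to the error $\e\omega\otimes Id_E$, that $E\otimes F$ is Nakano semi-positive and that
\begin{equation*}
Id_E\otimes\sqrt{-1}\Theta_{h_L}(L)\leq_{Nak}\tfrac1b\,\sqrt{-1}\Theta_{h_E\otimes h_\e}(E\otimes F)+\text{(error)}.
\end{equation*}
Feeding this into the Bochner--Kodaira--Nakano identity shows that an $L^2$ harmonic representative $u$ satisfies $\nabla'^{*}u=0$ and $\langle\sqrt{-1}\Theta_{h_E\otimes h_\e}(E\otimes F)\Lambda_\omega u,u\rangle=0$ up to terms controlled by $\e$, whence $su$ is harmonic in the limit and $\dbar^{*}(su)\to0$; this is the Enoki mechanism that drove the smooth case.

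The injectivity then follows from the usual orthogonality argument: if the class of $u$ lies in the kernel, so that $su=\dbar v$, then because $su$ is harmonic and exact one has $\|su\|^2=\lla su,\dbar v\rra=\lla\dbar^{*}(su),v\rra=0$, forcing $su\equiv0$; since $s$ is a nonzero holomorphic section and $u$ is smooth on the dense open set $\{s\neq0\}\cap Y_\e$, this yields $u\equiv0$, i.e. the class vanishes. To make this rigorous for the singular metric $h$, I would carry out the above on each $Y_\e$, use the monotonicity in Lemma \ref{equi.}(b) together with $\mathcal I(h_\e)=\mathcal I(h)$ from (c) to obtain $\e$-uniform $L^2$ bounds on the harmonic representatives and on the solutions $v$ of $\dbar v=su$ produced by the H\"ormander estimate (Lemma \ref{L2}), extract weakly convergent subsequences as $\e\downarrow0$, and identify the limits as the corresponding objects for $h$.

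The main obstacle is precisely this limiting procedure on the noncompact, incomplete-in-the-limit manifolds $Y_\e$. The $\e$-perturbation of the curvature destroys the exact semi-positivity used in Theorem \ref{chunle.}, so the vanishing $\nabla'^{*}u=0$ and $\langle\Theta\Lambda_\omega u,u\rangle=0$ holds only approximately and must be recovered in the limit; simultaneously one must establish $\e$-uniform (and $\delta$-uniform, for the auxiliary complete metric) $L^2$ estimates that survive the passage $\e\downarrow0$, so that the weak limits genuinely represent the original classes with respect to $h$ and solve the limiting $\dbar$-equation. This is exactly the technical heart of the Fujino--Matsumura method of \cite{fujino-osaka,FM,matsumura4,matsumura7}; the new input here, namely the smooth Nakano-positive factor $E$, enters only through the curvature bookkeeping already carried out in Theorem \ref{chunle.} and therefore introduces no additional analytic difficulty.
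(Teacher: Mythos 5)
Your proposal follows the same route as the paper: the Fujino--Matsumura scheme with the Demailly--Peternell--Schneider approximation (Lemma \ref{equi.}), complete K\"ahler forms $\omega_{\e,\delta}$ on $Y_\e=X\setminus Z_\e$ with bounded local potentials, an $L^2$ De Rham--Weil representation, the Bochner--Kodaira--Nakano identity with $\e$-error curvature estimates carried over from Theorem \ref{chunle.}, and a Cauchy--Schwarz step forcing $\|su_{\e,\delta}\|_{\e,\delta}\to 0$; your preliminary twist $\tilde h=h\otimes h_L^{-a}$ is a harmless repackaging of property (d). But there is a genuine gap at the single most delicate point: you assert that the solutions $v$ of $\dbar v=su_{\e,\delta}$, with $\e$-uniform bounds, are ``produced by the H\"ormander estimate (Lemma \ref{L2})''. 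Lemma \ref{L2} requires the operator $[\sqrt{-1}\Theta,\Lambda_{\omega}]$ to be positive definite, and that is precisely what is unavailable here: the curvature of $h_Eh_{\e}h_L$ is only semi-positive up to $-\e\omega$ errors. Worse, if a global H\"ormander-type estimate were applicable, it would make every $\dbar$-closed form exact and hence prove $H^{q}(X,K_X\otimes E\otimes F\otimes \I{h})=0$, which is false in general (take $L$, $F$, $E$ trivial); the statement is an injectivity theorem precisely because these groups need not vanish. In the paper, the existence and uniform boundedness of $v_{\e,\delta}$ (Lemma \ref{f-prop5.10}, citing Proposition 5.10 of \cite{FM}) is exactly where the hypothesis $s\eta=0$ enters: one transports the exactness of $su$ through the De Rham--Weil isomorphism between the $\dbar$-$L^2$ cohomology on $Y_{\e}$ and the \v{C}ech cohomology on $X$, and the uniform bound comes from inequality (\ref{eq5.1}) together with the controlled constants of Lemma \ref{f-lem5.12}; H\"ormander-type estimates are used only locally, on Stein sets where the bounded potential $\Psi+\delta\Psi_{\e}$ can be absorbed into the weight to create genuine positivity.

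A second, smaller inaccuracy: your concluding step (``$su$ is harmonic and exact, hence $su\equiv 0$, hence $u\equiv 0$ on $\{s\neq 0\}$, hence the class vanishes'') treats the fixed representative $u$ as if it were harmonic. It is not; only its projections $u_{\e,\delta}$ onto $\mathcal{H}^{n,q}_{\e,\delta}(E\otimes F)$ are, and these vary with $\e$ and $\delta$. The rigorous passage from $\lim_{\e\to 0}\varlimsup_{\delta\to 0}\|su_{\e,\delta}\|_{\e,\delta}=0$ to $\eta=0$ is the content of the paper's Lemma \ref{f-prop5.7} (Proposition 5.7 of \cite{FM}), which needs weak limits and a diagonal argument, not a pointwise identity. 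You do gesture at this (``extract weakly convergent subsequences\dots''), so the intent is right, but as written the proof would not close without replacing your H\"ormander step by the cohomological argument above.
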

\begin{proof}
We may assume $q>0$ since the case $q=0$ is obvious.  
For the proof, it is sufficient to show that 
an arbitrary cohomology class 
$\eta \in H^{q}(X, K_{X}\otimes E\otimes F \otimes \I{h})$ 
satisfying $s \eta = 0 \in H^{q}(X, K_{X}\otimes E\otimes F \otimes \I{h} \otimes L)$ 
is actually zero. We fix a K\"ahler form  $\omega$ on $X$ throughout the proof and represent the cohomology 
class $\eta \in H^{q}(X, K_{X}\otimes E\otimes F \otimes \I{h})$ 
by a $\dbar$-closed $E\otimes F$-valued $(n,q)$-form $u$ 
with $\| u \|_{h_Eh, \omega} < \infty$ 
by using the standard De Rham--Weil isomorphism 
\begin{equation*}
H^{q}(X, K_{X}\otimes E\otimes F \otimes \I{h})
\cong 
\frac{\Ker\dbar: L^{n,q}_{(2)}(E\otimes F)_{h_Eh,\omega}
\to L^{n,q+1}_{(2)}(E\otimes F)_{h_Eh,\omega}}
{\Image\dbar: L^{n,q-1}_{(2)}(E\otimes F)_{h_Eh,\omega}
\to L^{n,q}_{(2)}(E\otimes F)_{h_Eh,\omega}}.
\end{equation*}
For the given singular hermitian metric $h$ on $F$, 
by the Demailly-Peternell-Schneider equisingular approximation theorem (Lemma \ref{equi.}), there is a countable family $\{h_{\e} \}_{1\gg \e>0}$ of singular hermitian metrics on $F$ with the following properties: 
\begin{itemize}
\item[(a)]$h_{\e}$ is smooth on $Y_{\e}:=X \setminus Z_{\e}$, 
where $Z_{\e}$ is a proper closed subvariety on $X$. 
\item[(b)]$h_{\e'} \leq h_{\e''} \leq h$ holds on $X$ 
when $\e' > \e'' > 0$.
\item[(c)]$\mathcal I({h})= \mathcal I({h_{\e}})$ on $X$.
\item[(d)]$\sqrt{-1} \Theta_{h_{\e}}(F) 
\geq a\sqrt{-1}\Theta_{h_{F}}(F) -\e \omega$ on $X$. 
\end{itemize}
By \cite[Section 3]{fujino-osaka} we can take a complete K\"ahler form  $\omega_{\e}$ on $Y_{\e}$ 
such that: $\omega_{\e}$ is a complete K\"ahler form on 
$Y_{\e}$, $\omega_{\e} \geq \omega $ on $Y_{\e}$ and $\omega_{\e}=\deldel \Psi_{\e}$ for some bounded function $\Psi_{\e}$ 
on a neighborhood of every $p \in X$.    
We define a K\"ahler form $\omega_{\e, \delta}$ on $Y_{\e}$ by 
$$
\omega_{\e, \delta}:=\omega + \delta \omega_{\e} 
$$
for $\e$ and $\delta$ with $0<\delta \ll \e$. 
The following properties are easy to check 
\begin{itemize}
\item[(A)] $\omega_{\e, \delta}$ is a 
complete K\"ahler form on $Y_{\e}=X\setminus Z_{\e}$ for every 
$\delta>0$.
\item[(B)] $\omega_{\e, \delta} \geq \omega $ on $Y_{\e}$ 
for every  $\delta>0$. 
\item[(C)] $\Psi + \delta \Psi_{\e}$ is a bounded local potential function of  $\omega_{\e, \delta}$ and converges to $\Psi$ as $\delta \to 0$ where 
$\Psi$ is a local potential function of $\omega$.  
\end{itemize} 

In the proof of Theorem \ref{f-thm5.1}, we actually consider only a countable sequence $\{\e_{k}\}_{k=1}^{\infty}$ 
(resp.~$\{\delta_{\ell}\}_{\ell=1}^{\infty}$) converging to zero since 
we need to apply Cantor's diagonal argument, 
but we often use the notation $\e$ (resp.~$\delta$) for simplicity. 
In the following, we mainly consider the $L^{2}$-space 
$L^{n,q}_{(2)}(Y_{\e},E\otimes F)_{h_Eh_{\e},\omega_{\e, \delta}}$ 
of $E\otimes F$-valued $(n,q)$-forms on $Y_{\e}$. 
We denote $L^{n,q}_{(2)}(E\otimes F)_{\e, \delta}:= L^{n,q}_{(2)}(Y_{\e},E\otimes F)_{h_Eh_{\e},\omega_{\e, \delta}}$ and 
$\|\bullet\|_{\e, \delta}:=\|\bullet\|_{h_Eh_{\e}, \omega_{\e, \delta}}$ 
for simplicity. The following inequality is easy to check
\begin{align}\label{eq5.1}
\|u\|_{\e, \delta} \leq \|u\|_{h_Eh, \omega_{\e, \delta}} \leq \|u\|_{h_Eh, \omega} <\infty.  
\end{align}
In particular, the norm $\|u\|_{\e, \delta}$ is uniformly bounded 
with respect to $\e$, $\delta$.

There are various formulations for $L^2$-estimates for $\dbar$-equations,
which originated from H\"{o}rmander's paper \cite{Hormander}.
The following one is suitable for our purpose. 
\begin{lem}[{cf.~\cite[4.1\,Th\'eor\`eme]{demailly-dbar}}]\label{f-lem5.12}
Assume that $B$ is a Stein open set in $X$ such that 
$\omega_{\e, \delta}=\deldel 
(\Psi + \delta \Psi_{\e})$ on a neighborhood of $\overline B$. 
Then, for an arbitrary 
$\alpha \in \Ker \dbar \subset L^{n, q}_{(2)}(B\setminus Z_{\e}, E\otimes F)_{\e, \delta}$, 
there exist $\beta \in L^{n, q-1}_{(2)}(B\setminus Z_{\e}, E\otimes F)_{\e, \delta}$ 
and a positive constant $C_{\e, \delta}$ $($independent of $\alpha$$)$ 
such that: $(1)$ $\dbar \beta = \alpha$  and 
$\|\beta \|^{2}_{\e, \delta}\leq C_{\e, \delta}  \|\alpha \|^{2}_{\e, \delta}$; $(2)$ $\varlimsup_{\delta \to 0} 
C_{\e,\delta}$ is finite and is independent of $\e$.
\end{lem}
\begin{proof}[Proof of Lemma \ref{f-lem5.12}]
We may assume $\e < 1/2$. 
For the smooth hermitian metric $H_{\e,\delta}$ on $E\otimes F$ over 
$B\setminus Z_{\e}$ defined by 
$H_{\e,\delta}:=h_Eh_{\e} e^{-(\Psi + \delta \Psi_{\e})}$, 
the curvature satisfies $$\sqrt{-1}\Theta_{H_{\e,\delta}}(E\otimes F)\geq_{Nak} {1}/{2}\cdot Id_E\otimes\omega_{\e,\delta}$$
by property (B) and 
$\sqrt{-1}\Theta_{h_Eh_{\e}}(E\otimes F) \geq -\e Id_E\otimes \omega$.  
The $L^{2}$-norm $\|\alpha \|_{H_{\e,\delta}, 
\omega_{\e,\delta}}$ with respect to $H_{\e,\delta}$ 
and $\omega_{\e,\delta}$ is finite 
since the function $\Psi + \delta \Psi_{\e} $ is bounded and 
$\|\alpha \|_{\e,\delta}$ is finite. 
Therefore, from the standard $L^{2}$-method for the $\dbar$-equation 
(cf. \cite[4.1\,Th\'eor\`eme]{demailly-dbar}), 
we obtain a solution $\beta$ of the $\dbar$-equation 
$\dbar \beta =\alpha$ with  $$\|\beta \|^{2}_{H_{\e,\delta},\omega_{\e,\delta}} 
\leq \frac{2}{q} \|\alpha \|^{2}_{H_{\e,\delta},\omega_{\e,\delta}}.  
$$ It follows that 
$$\|\beta \|^2_{\e, \delta}  \leq 
C_{\e, \delta}
\|\alpha \|^2_{\e, \delta}$$ 
where $C_{\e, \delta}=\frac{2}{q}
\frac{ \sup_{B} e^{-(\Psi + \delta \Psi_{\e})} }
{\inf_{B} e^{-(\Psi + \delta \Psi_{\e})}}$. 
It is easy to check $C_{\e, \delta}$ satisfies the above properties.
\end{proof}
By essentially using the property (C) and Lemma \ref{f-lem5.12} we have the following De Rham--Weil isomorphism 
from the $\dbar$-$L^2$ cohomology on $Y_{\e}$ to 
the $\rm{\check{C}}$ech cohomology on $X$ 
(cf. \cite[Claim 1]{fujino-osaka} and \cite[Proposition 5.5]{matsumura4}) 
\begin{equation*}\label{FujMatsu}
H^{q}(X, K_{X}\otimes E\otimes F \otimes \I{h_{\e}})
\cong 
\frac{\Ker\dbar: L^{n,q}_{(2)}(E\otimes F)_{\e, \delta}
\to L^{n,q+1}_{(2)}(E\otimes F)_{\e, \delta}}
{\Image\dbar: L^{n,q-1}_{(2)}(E\otimes F)_{\e, \delta}
\to L^{n,q}_{(2)}(E\otimes F)_{\e, \delta}}. 
\end{equation*}
The following orthogonal decomposition then follows 
$$L^{n,q}_{(2)}(E\otimes F)_{\e, \delta}= 
\Image\dbar \,  \oplus 
\mathcal{H}^{n,q}_{\e, \delta}(E\otimes F)\, \oplus 
\Image \dbar^{*}_{\e, \delta}.$$
Now that the $E\otimes F$-valued $(n,q)$-form $u$  
belongs to $L^{n,q}_{(2)}(E\otimes F)_{\e,\delta}$ by \eqref{eq5.1}, 
it can be decomposed into $u=\dbar w_{\e, \delta}+u_{\e, \delta}$ for some $w_{\e, \delta} \in {\rm{Dom}\, \dbar} \subset 
L^{n,q-1}_{(2)}(E\otimes F)_{\e,\delta}\text{ and } 
u_{\e, \delta} \in \mathcal{H}^{n,q}_{\e, \delta}(E\otimes F). $
The orthogonal projection of $u$ to 
${\Image \dbar^{*}_{\e, \delta}}$ 
is zero since $u$ is $\dbar$-closed.
We need the following Lemma \ref{f-prop5.7} which can be proved by the same analytic method provided by O. Fujino and S. Matsumura in \cite{FM,matsumura4}. Here we omit the proof for simplicity. For the details, we refer the reader to the proof of Proposition 5.7 in \cite{FM} in which the inequality (\ref{eq5.1}) plays an important role. By Lemma \ref{f-prop5.7} it is sufficient for the proof to study the asymptotic behavior of the norm of $su_{\e, \delta}$.
\begin{lem}[cf. Proposition 5.7 in \cite{FM}]\label{f-prop5.7}
The cohomology class $\eta$ is zero if 
$$
\varliminf_{\e \to 0} \varliminf_{\delta \to 0} \|su_{\e, \delta}\|_{\e, \delta} =0
$$
where $\|\bullet\|_{\e, \delta}:= 
\|\bullet\|_{h_Eh_{\e}h_L, \omega_{\e,\delta}}$
for an $E\otimes F \otimes L$-valued form $\bullet$.
\end{lem}

Following the proof of Proposition 5.8 in \cite{FM} 
and Proposition 2.8 in \cite{matsumura1} we have 
\begin{lem}\label{f-prop5.8}
$$\lim_{\e \to 0} \varlimsup_{\delta \to 0}
\| \dbar^{*}_{\e, \delta} s u_{\e, \delta} 
\|_{\e, \delta}=0.$$
\end{lem}
\begin{proof}[Proof of Lemma \ref{f-prop5.8}]
By applying the Bochner--Kodaira--Nakano identity and the density lemma to $u_{\e, \delta}$ and $s u_{\e, \delta}$ 
(see \cite[Proposition 5.8]{FM} and \cite[Proposition 2.8]{matsumura1}), 
we have 
\begin{align}
0 &= \lla \sqrt{-1}\Theta_{h_Eh_{\e}}(E\otimes F)
\Lambda_{\omega_{\e, \delta}} u_{\e,\delta}, u_{\e,\delta}
  \rra_{\e,\delta} + \|D'^{*}_{\e,\delta}u_{\e,\delta} \|^{2}_{\e,\delta}\label{eq5.4}\\
\| \dbar^{*}_{\e, \delta} s u_{\e, \delta} 
\|^2_{\e, \delta} &= \lla \sqrt{-1}\Theta_{h_Eh_{\e}h_{L}}(E\otimes F\otimes L)
\Lambda_{\omega_{\e, \delta}} su_{\e, \delta}, su_{\e, \delta}
  \rra_{\e, \delta} + \|D'^{*}_{\e, \delta}su_{\e, \delta} \|^{2}_{\e, \delta}    \label{eq5.5}
\end{align}
where we used the fact that $u_{\e, \delta}$ is harmonic 
and $\dbar (s u_{\e,\delta})=s\dbar u_{\e,\delta}=0$. 
We have 
\begin{equation}\label{huaixi}
\begin{split}
&\sqrt{-1}\Theta_{h_Eh_{\e}}(E\otimes F) 
=\sqrt{-1}\Theta_{h_E}(E) +Id_E\otimes \sqrt{-1}\Theta_{h_{\e}}(F) \\
&\geq_{Nak} \sqrt{-1}\Theta_{h_E}(E) +Id_E\otimes (a \sqrt{-1}\Theta_{h_L}(L)  -\e \omega )\\
&=\sqrt{-1}\Theta_{h_E}(E) +Id_E\otimes ((a-b) \sqrt{-1}\Theta_{h_L}(L)  +b\sqrt{-1}\Theta_{h_L}(L) -\e \omega )\\
&\geq_{Nak}Id_E\otimes (b\sqrt{-1}\Theta_{h_L}(L) -\e \omega )
\end{split}
\end{equation}
by properties (d), (B) and the assumption (\ref{eq7}) in 
Theorem \ref{f-thm5.1}. It follows that 
\begin{align*}
\sqrt{-1}\Theta_{h_Eh_{\e}}(E\otimes F)\geq_{Nak} -\e Id_E\otimes \omega \geq_{Nak} -\e Id_E\otimes \omega_{\e,\delta}.
\end{align*}
So the integrand $g_{\e, \delta}$ of the first term 
of \eqref{eq5.4} satisfies 
\begin{equation}\label{eq5.6}
 -\e q |u_{\e, \delta}|^{2}_{\e, \delta} 
 \leq g_{\e, \delta}:= \langle \sqrt{-1}\Theta_{h_Eh_{\e}}(E\otimes F)
\Lambda_{\omega_{\e, \delta}} u_{\e,\delta}, u_{\e,\delta}
  \rangle_{\e,\delta}. 
\end{equation}
For the precise 
argument, see \cite[Step 2 in the proof of Theorem 3.1]{matsumura4}. 
By \eqref{eq5.4} we have 
\begin{align*}
&\lim_{\e \to 0} \varlimsup_{\delta \to 0} 
\Big( \int_{\{g_{\e, \delta} \geq 0\}} g_{\e, \delta}\,dV_{\omega_{\e, \delta}}
+\|D'^{*}_{\e,\delta}u_{\e,\delta} \|^{2}_{\e,\delta} \Big)\\
&= \lim_{\e \to 0} \varlimsup_{\delta \to 0} 
\Big(-\int_{\{g_{\e, \delta} \leq 0\}} g_{\e, \delta}\, dV_{\omega_{\e, \delta}}\Big)\\
&\leq \lim_{\e \to 0} \varlimsup_{\delta \to 0} 
\Big(\e q \int_{\{g_{\e, \delta} \leq 0\}}  |
u_{\e, \delta}|^{2}_{\e, \delta}\, dV_{\omega_{\e, \delta}}\Big)\\
&\leq  \lim_{\e \to 0} \varlimsup_{\delta \to 0} 
\Big(\e q \|u_{\e, \delta} \|^2_{\e, \delta}\Big)=0.
\end{align*}
It follows that 
\begin{equation}\label{A4}
\lim_{\e \to 0} \varlimsup_{\delta \to 0} 
\int_{\{g_{\e, \delta} \geq 0\}} g_{\e, \delta}\,dV_{\omega_{\e, \delta}}=0
\,\,\text{and}\,\,
\lim_{\e \to 0} \varlimsup_{\delta \to 0} 
\|D'^{*}_{\e,\delta}u_{\e,\delta} \|^{2}_{\e,\delta}=0.
\end{equation}
Therefore, by equation (\ref{eq5.4}) we have 
\begin{equation}
\lim_{\e \to 0} \varlimsup_{\delta \to 0} 
\lla \sqrt{-1}\Theta_{h_Eh_{\e}}(E\otimes F)
\Lambda_{\omega_{\e, \delta}} u_{\e,\delta}, u_{\e,\delta}
  \rra_{\e,\delta}=0
\end{equation}
Thus, we obtain
\begin{equation}\label{chunle7171}
0\leq\lim_{\e \to 0} \varlimsup_{\delta \to 0} 
\lla \sqrt{-1}\Theta_{h_Eh_{\e}h_{L}}(E\otimes F\otimes L)
\Lambda_{\omega_{\e, \delta}} su_{\e, \delta}, su_{\e, \delta} \rra_{\e, \delta}
\end{equation}
thanks to
\begin{equation*}
\sqrt{-1}\Theta_{h_Eh_{\e}}(E\otimes F)\leq_{Nak}\sqrt{-1}\Theta_{h_Eh_{\e}h_{L}}(E\otimes F\otimes L)
\end{equation*}
and 
\begin{equation*}
\langle \sqrt{-1}\Theta_{h_Eh_{\e}}(E\otimes F)
\Lambda_{\omega_{\e, \delta}} su_{\e, \delta}, su_{\e, \delta} \rangle_{\e, \delta}\leq\langle \sqrt{-1}\Theta_{h_Eh_{\e}h_{L}}(E\otimes F\otimes L)
\Lambda_{\omega_{\e, \delta}} su_{\e, \delta}, su_{\e, \delta} \rangle_{\e, \delta}.
\end{equation*}
On the other hand, by formula (\ref{huaixi}) we have
$$\sqrt{-1}\Theta_{h_Eh_{\e}}(E\otimes F) \geq_{Nak}Id_E\otimes (b\sqrt{-1}\Theta_{h_L}(L) -\e \omega )
\geq_{Nak} Id_E\otimes (b\sqrt{-1}\Theta_{h_L}(L) -\e \omega_{\e, \delta}).$$
It follows that 
\begin{align*}
&\lla \sqrt{-1}\Theta_{h_Eh_{\e}h_{L}}(E\otimes F\otimes L)
\Lambda_{\omega_{\e, \delta}} su_{\e, \delta}, su_{\e, \delta} \rra_{\e, \delta}\\ 
\leq& \big(1+\frac{1}{b}\big)
\int_{Y_{\e}} |s|^2_{h_L}g_{\e,\delta}\,dV_{\omega_{\e, \delta}}
+\frac{\e q}{b} \int_{Y_{\e}} 
|s|^2_{h_L}|u_{\e, \delta}|^{2}_{\e, \delta} \,dV_{\omega_{\e, \delta}}\\
\leq& \big(1+\frac{1}{b}\big)\sup_{X} 
|s|^2_{h_L}\int_{\{g_{\e, \delta} \geq 0\}} g_{\e,\delta}\,dV_{\omega_{\e, \delta}}+
\frac{\e q}{b} \sup_{X} |s|^2_{h_L} \|u_{\e,\delta}\|^2_{\e,\delta}\\
\leq& \big(1+\frac{1}{b}\big)\sup_{X} 
|s|^2_{h_L}\int_{\{g_{\e, \delta} \geq 0\}} g_{\e,\delta}\,dV_{\omega_{\e, \delta}}+
\frac{\e q}{b} \sup_{X} |s|^2_{h_L} \|u\|^2_{h_Eh_{\e},\omega}
\end{align*}
which leads to 
\begin{equation}\label{eq4}
\lim_{\e \to 0} \varlimsup_{\delta \to 0} \lla\sqrt{-1}\Theta_{h_Eh_{\e}h_{L}}(E\otimes F\otimes L)\Lambda_{\omega_{\e, \delta}} su_{\e, \delta}, su_{\e, \delta}\rra_{\e, \delta}=0
\end{equation}
by formulas (\ref{A4}) and (\ref{chunle7171}). 
Moreover, we have 
$$\|D'^{*}_{\e, \delta} su_{\e, \delta} \|^{2}_{\e, \delta}
=\|s D'^{*}_{\e, \delta} u_{\e, \delta} \|^{2}_{\e, \delta}
\leq \sup_{X} |s|^2_{h_L} \|D'^{*}_{\e, \delta} u_{\e, \delta} \|^{2}_{\e, \delta}$$
thanks to $D'^{*}_{\e, \delta}=-*\dbar*$ where $*$ is the Hodge star operator with respect to $\omega_{\e,\delta}$. By formula (\ref{A4})
it follows that
\begin{equation}\label{eq5}
\lim_{\e \to 0} \varlimsup_{\delta \to 0} \|D'^{*}_{\e, \delta} su_{\e, \delta} \|^{2}_{\e, \delta}=0.
\end{equation}
Therefore, we obtain the conclusion 
by the equations (\ref{eq5.5}), (\ref{eq4}) and (\ref{eq5}).
\end{proof}

\begin{lem}\label{f-prop5.10}
There exist $E\otimes F\otimes L$-valued $(n, q-1)$-forms  
$v_{\e,\delta}$ on $Y_{\e}$ such that 
$\dbar v_{\e,\delta}=su_{\e, \delta}$ and 
$\varlimsup_{\delta \to 0} \| v_{\e,\delta}\|_{\e, \delta} $ 
can be bounded by a constant independent of $\e$. 
\end{lem}
The proof of Lemma \ref{f-prop5.10} is completely the same as that in Proposition 5.10 in \cite{FM} in which  Lemma \ref{f-lem5.12} is used to establish the De Rham--Weil isomorphism from the $\dbar$-$L^2$ cohomology on $Y_{\e}$ to the $\rm{\check{C}}$ech cohomology on $X$ (cf. \cite[Claim 1]{fujino-osaka} and \cite[Proposition 5.5]{matsumura4}) 
and the inequality (\ref{eq5.1}) is essentially used to control the bound. 
For the details, we refer the reader to the proof of Proposition 5.10 in \cite{FM} and here we omit it for simplicity. 
\begin{lem}\label{f-prop5.11}
\begin{align*}
\lim_{\e \to 0} \varlimsup_{\delta \to 0} \|s u_{\e, \delta}\|_{\e, \delta}=0. 
\end{align*}
\end{lem}
\begin{proof}[Proof of Lemma \ref{f-prop5.11}]
For the solution 
$v_{\e,\delta}$ in Lemma \ref{f-prop5.10}, 
it is easy to check that 
\begin{align*}
\lim_{\e \to 0} \varlimsup_{\delta \to 0} \|s u_{\e, \delta}\|^2_{\e, \delta}
=\lim_{\e \to 0} \varlimsup_{\delta \to 0} 
\lla \dbar^{*}_{\e,\delta} s u_{\e, \delta}, v_{\e,\delta}\rra_{\e, \delta}
\leq \lim_{\e \to 0} \varlimsup_{\delta \to 0}
\| \dbar^{*}_{\e,\delta} s u_{\e, \delta}\|_{\e,\delta} \|v_{\e,\delta}\|_{\e, \delta}. 
\end{align*}
By Lemma \ref{f-prop5.8} and Lemma \ref{f-prop5.10} we conclude that 
the right hand side is zero. 
\end{proof}
Now we finish the proof of Theorem \ref{f-thm5.1.} 
by Lemma \ref{f-prop5.7}. 
\end{proof}
\begin{cor}[=Corollary \ref{f-m}]\label{f-m.}
Let $L$ be a semi-positive holomorphic line bundle over a compact K\"ahler manifold $X$ with a smooth hermitian metric $h_L$ satisfying $\sqrt{-1}\Theta_{h_L} (L) \geq 0$.
If $F$ is a holomorphic line bundle over $X$ with a singular hermitian 
metric $h$ such that $\sqrt{-1}\Theta_{h}(F) - a \sqrt{-1}\Theta_{h_L} (L) \geq 0$ in the sense of currents for some positive constants $a>0$, then for a nonzero section $s \in H^{0}(X, L)$ the multiplication map induced by $\otimes s$ 
\begin{equation*}
\times s: 
H^{q}(X, K_{X}\otimes F \otimes \mathcal I(h)) 
\xrightarrow{}
H^{q}(X, K_{X}\otimes F \otimes \mathcal I(h) \otimes L )
\end{equation*}
is injective for every $q\geq0$.  
\end{cor}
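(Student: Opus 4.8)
The plan is to deduce this corollary directly from Theorem \ref{f-thm5.1.} by specializing the vector bundle $E$ to the trivial line bundle, exactly in the spirit of the proofs of Corollary \ref{en.} and Corollary \ref{en_.}. First I would let $E=\mathcal{O}_X$ be the trivial holomorphic line bundle on $X$ equipped with the flat hermitian metric $h_E$, so that $\sqrt{-1}\Theta_{h_E}(E)=0$ identically. Under this choice there are canonical isomorphisms
\begin{equation*}
K_X\otimes E\otimes F\otimes \mathcal{I}(h)\cong K_X\otimes F\otimes \mathcal{I}(h),\qquad
K_X\otimes E\otimes F\otimes \mathcal{I}(h)\otimes L\cong K_X\otimes F\otimes \mathcal{I}(h)\otimes L,
\end{equation*}
and the multiplication map $\times s$ appearing in the corollary is thereby identified with the one in Theorem \ref{f-thm5.1.}.

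Next I would check that the two curvature hypotheses of Theorem \ref{f-thm5.1.} are satisfied for a suitable pair of constants. Hypothesis (1), namely $\sqrt{-1}\Theta_{h}(F)-a\sqrt{-1}\Theta_{h_L}(L)\geq 0$ in the sense of currents, is precisely the standing assumption of the corollary, so nothing new is needed there. For hypothesis (2) I would simply take $b=a$ (indeed any $0<b\leq a$ works). Since $E$ is trivial and $a-b=0$, the left-hand side
\begin{equation*}
\sqrt{-1}\Theta_{h_E}(E)+(a-b)\,Id_E\otimes\sqrt{-1}\Theta_{h_L}(L)
\end{equation*}
vanishes identically and is therefore trivially Nakano semi-positive. (More generally, for $0<b<a$ it equals $(a-b)\sqrt{-1}\Theta_{h_L}(L)$, which is a semi-positive $(1,1)$-form; for a line bundle Nakano semi-positivity coincides with ordinary semi-positivity, so hypothesis (2) again holds by $\sqrt{-1}\Theta_{h_L}(L)\geq 0$.)

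Finally, with both hypotheses verified, I would invoke Theorem \ref{f-thm5.1.} to conclude that
\begin{equation*}
\times s:\ H^{q}(X,K_X\otimes E\otimes F\otimes \mathcal{I}(h))\longrightarrow H^{q}(X,K_X\otimes E\otimes F\otimes \mathcal{I}(h)\otimes L)
\end{equation*}
is injective for every $q\geq 0$, and under the identification $E=\mathcal{O}_X$ this is exactly the asserted injectivity of $\times s$ on $H^q(X,K_X\otimes F\otimes \mathcal{I}(h))$. Since every step is a direct specialization of the already-established Theorem \ref{f-thm5.1.}, there is no genuine obstacle; the only point deserving a moment's care is the admissible choice of the auxiliary constant $b$, which must be taken with $b\leq a$ so that condition (2) is not spoiled once $E$ contributes no curvature.
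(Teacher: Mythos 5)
Your proposal is correct and is essentially identical to the paper's own proof: the paper likewise takes $E$ to be the trivial line bundle, chooses $b=a$ so that condition (2) of Theorem \ref{f-thm5.1.} holds trivially, and then applies that theorem directly. Your additional remark that any $0<b\leq a$ also works (since for a line bundle Nakano semi-positivity reduces to ordinary semi-positivity of $(a-b)\sqrt{-1}\Theta_{h_L}(L)$) is a harmless elaboration of the same argument.
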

\begin{proof}
We let $E$ be the trivial line bundle on $X$.
Then the conditions (1) and (2) in Theorem \ref{f-thm5.1.} are easy to check if we take $b=a$. By Theorem \ref{f-thm5.1.} we know that 
for a nonzero section $s \in H^{0}(X, L)$ the multiplication map induced by $\otimes s$ 
\begin{equation*}
\times s:  
H^{q}(X, K_{X}\otimes E \otimes F \otimes \mathcal I(h)) 
\xrightarrow{} 
H^{q}(X, K_{X} \otimes E\otimes F \otimes \mathcal I(h) \otimes L )
\end{equation*}
that is, 
\begin{equation*}
\times s:  
H^{q}(X, K_{X}\otimes F \otimes \mathcal I(h)) 
\xrightarrow{} 
H^{q}(X, K_{X}\otimes F \otimes \mathcal I(h) \otimes L )
\end{equation*}
is injective for every $q\geq0$, where 
$\mathcal I(h)$ is the multiplier ideal sheaf of $h$.
\end{proof}
\begin{cor}[=Corollary \ref{f-m2}]\label{f-m2.}
Let $L$ be a semi-positive holomorphic line bundle over a compact K\"ahler manifold $(X,\omega)$ equipped with a smooth hermitian metric $h_L$ satisfying $\sqrt{-1}\Theta_{h_L} (L) \geq 0$ and $E$ a Nakano semi-positive vector bundle over $X$.
If $F$ is a holomorphic line bundle over $X$ with a singular hermitian 
metric $h$ such that $\sqrt{-1}\Theta_{h}(F) - a \sqrt{-1}\Theta_{h_L} (L) \geq 0$ for some positive constants $a>0$, then for a nonzero section $s \in H^{0}(X, L)$ the multiplication map induced by $\otimes s$ 
\begin{equation*}
\times s:
H^{q}(X, K_{X}\otimes E \otimes F \otimes \mathcal I(h)) 
\xrightarrow{} 
H^{q}(X, K_{X} \otimes E\otimes F \otimes \mathcal I(h) \otimes L )
\end{equation*}
is injective for every $q\geq0$.  
\end{cor}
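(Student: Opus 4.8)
The plan is to deduce the statement directly from the main injectivity theorem, Theorem \ref{f-thm5.1.}, by instantiating its data and fixing the free constants appropriately. Concretely, I would apply Theorem \ref{f-thm5.1.} with the same holomorphic line bundle $F$ carrying the singular hermitian metric $h$, the same semi-positive line bundle $(L,h_L)$ with $\sqrt{-1}\Theta_{h_L}(L)\geq 0$, and with the vector bundle $E$ taken to be the given Nakano semi-positive bundle equipped with its hermitian metric $h_E$. The only genuine choice left is the pair of positive constants $(a,b)$ appearing in the hypotheses of Theorem \ref{f-thm5.1.}; here I would simply set $b=a$, exactly as was done in the trivial-bundle case of Corollary \ref{f-m.}.

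With this choice the two hypotheses of Theorem \ref{f-thm5.1.} are immediate. Condition (1), namely $\sqrt{-1}\Theta_{h}(F) - a\sqrt{-1}\Theta_{h_L}(L)\geq 0$ in the sense of currents, is nothing but the assumption imposed on $F$ in the corollary. Condition (2), $\sqrt{-1}\Theta_{h_E}(E)+(a-b)Id_E\otimes\sqrt{-1}\Theta_{h_L}(L)\geq_{Nak}0$, collapses once $b=a$ to
\begin{equation*}
\sqrt{-1}\Theta_{h_E}(E)\geq_{Nak}0,
\end{equation*}
which is precisely the hypothesis that $E$ is Nakano semi-positive. Hence both conditions of Theorem \ref{f-thm5.1.} are satisfied, and its conclusion yields exactly the asserted injectivity of $\times s$ on $H^{q}(X,K_X\otimes E\otimes F\otimes\mathcal{I}(h))$ for every $q\geq 0$.

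I expect essentially no analytic obstacle in this deduction, since all the substantive work, the Demailly-Peternell-Schneider equisingular approximations of Lemma \ref{equi.} together with the H\"{o}rmander $L^2$-estimates of Lemma \ref{f-lem5.12} and the asymptotic analysis of $\|su_{\e,\delta}\|_{\e,\delta}$ in Lemmas \ref{f-prop5.8}--\ref{f-prop5.11}, has already been carried out in the proof of Theorem \ref{f-thm5.1.}. The only point worth recording is the role of the semi-positivity $\sqrt{-1}\Theta_{h_L}(L)\geq 0$: it guarantees that for any $b$ with $0<b\leq a$ the term $(a-b)Id_E\otimes\sqrt{-1}\Theta_{h_L}(L)$ is Nakano semi-positive, so the whole admissible range $0<b\leq a$ would work; choosing $b=a$ removes that term entirely and sidesteps any sign discussion, leaving no delicate case behind.
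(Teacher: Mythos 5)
Your proposal is correct and follows exactly the paper's own proof: the paper likewise applies Theorem \ref{f-thm5.1.} with $b=a$, so that condition (2) reduces to the Nakano semi-positivity of $E$ and condition (1) is the stated hypothesis on $(F,h)$. Your write-up merely spells out this collapse in more detail than the paper does, which is fine.
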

\begin{proof}
For a Nakano semi-positive vector bundle $E$ on $X$ 
the conditions (1) and (2) in Theorem \ref{f-thm5.1.} are easy to check if we take $b=a$. By Theorem \ref{f-thm5.1.} we know that 
for a nonzero section $s \in H^{0}(X, L)$ the multiplication map induced by $\otimes s$ 
\begin{equation*}
\times s:  
H^{q}(X, K_{X}\otimes E \otimes F \otimes \mathcal I(h)) 
\xrightarrow{} 
H^{q}(X, K_{X} \otimes E\otimes F \otimes \mathcal I(h) \otimes L )
\end{equation*}
is injective for every $q\geq0$, where 
$\mathcal I(h)$ is the multiplier ideal sheaf of $h$.
\end{proof}

\section{Proof of vanishing theorems}\label{pov.}
\begin{cor}[=Corollary \ref{f-thm1.3}]\label{f-thm1.3.}
Let $X$ be a smooth projective manifold with 
a K\"ahler form $\omega$ and $E$ a Nakano semi-positive vector bundle on $X$.
Let $F$ be a holomorphic line bundle on $X$ with 
a singular hermitian metric $h$ such that 
$\sqrt{-1}\Theta_{h}(F)\geq \varepsilon \omega$ in the sense of currents for some $\varepsilon >0$. Then for every $q>0$ we have 
$$H^q(X, K_X\otimes E\otimes F\otimes \mathcal I(h))=0.$$ 
\end{cor}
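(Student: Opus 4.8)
The plan is to deduce this vanishing statement directly from our main injectivity theorem (Theorem \ref{f-thm5.1}) by exploiting the projectivity of $X$, which supplies both an ample line bundle and Serre's vanishing theorem. Write $\mathcal{G}:=K_X\otimes E\otimes F\otimes \mathcal I(h)$ for the sheaf in question; since $\mathcal I(h)$ is a coherent ideal sheaf and $K_X\otimes E$ is locally free, $\mathcal{G}$ is coherent. The whole argument then reduces to producing a semi-positive line bundle $L$ together with a nonzero section $s\in H^0(X,L)$, subject to the curvature hypotheses of Theorem \ref{f-thm5.1}, for which the target group $H^q(X,\mathcal{G}\otimes L)$ already vanishes; injectivity of $\times s$ into a zero group then forces $H^q(X,\mathcal{G})=0$.

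Concretely, I would fix an ample line bundle $A$ on $X$ with a smooth metric $h_A$ of positive curvature $\theta_A:=\sqrt{-1}\Theta_{h_A}(A)>0$, and choose a constant $C>0$ with $\theta_A\leq C\omega$ (possible since both are smooth $(1,1)$-forms and $\omega$ is Kähler). By Serre's vanishing theorem there is an integer $m_0$ such that $H^q(X,\mathcal{G}\otimes A^{m})=0$ for every $q>0$ and every $m\geq m_0$; enlarging $m_0$ if necessary, we may also assume that $A^{m}$ admits a nonzero global section for $m\geq m_0$. Now set $L:=A^{m}$ for one such $m\geq m_0$, equip it with the induced smooth metric $h_L$ so that $\sqrt{-1}\Theta_{h_L}(L)=m\theta_A\geq 0$ (hence $L$ is semi-positive), and put $a:=\varepsilon/(mC)$ and $b:=a$.

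With these choices the hypotheses of Theorem \ref{f-thm5.1} are readily verified: condition (1) holds because
$a\sqrt{-1}\Theta_{h_L}(L)=am\theta_A\leq amC\,\omega=\varepsilon\omega\leq \sqrt{-1}\Theta_{h}(F)$
in the sense of currents, and condition (2) reduces, since $a=b$, to $\sqrt{-1}\Theta_{h_E}(E)\geq_{Nak}0$, which holds because $E$ is Nakano semi-positive. Choosing a nonzero section $s\in H^0(X,L)$, Theorem \ref{f-thm5.1} yields that $\times s\colon H^q(X,\mathcal{G})\to H^q(X,\mathcal{G}\otimes L)$ is injective for every $q\geq 0$. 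As $H^q(X,\mathcal{G}\otimes L)=H^q(X,\mathcal{G}\otimes A^{m})=0$ for $q>0$ by the choice of $m$, this injectivity forces $H^q(X,\mathcal{G})=0$ for every $q>0$, which is exactly the assertion.

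There is no genuine analytic obstacle here, since all the hard work is already carried by Theorem \ref{f-thm5.1}; the only point requiring care is the bookkeeping of the curvature inequalities, together with the observation that projectivity is precisely what is being used, namely to furnish the ample bundle $A$ needed for Serre vanishing and for the existence of $s$. The mild subtlety I would double-check is that $a$ can be chosen small enough to absorb $m\theta_A$ into $\varepsilon\omega$ \emph{uniformly} on $X$, which is handled through the compactness of $X$ via the comparison constant $C$. Note finally that this proof genuinely needs $X$ to be projective, which is why the more general Theorem \ref{thm2} on arbitrary compact \ka manifolds is established separately by the transcendental method.
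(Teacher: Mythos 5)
Your proposal is correct and follows essentially the same route as the paper's own proof: take $L=A^{\otimes m}$ for an ample $A$ with $m$ large enough for Serre vanishing and for a nonzero section to exist, verify hypotheses (1) and (2) of Theorem \ref{f-thm5.1} with $a$ small (the paper takes $0<b<a$ where you take $b=a$, an immaterial difference), and conclude from injectivity into a vanishing group. Your version merely makes the comparison constant $C$ with $\theta_A\leq C\omega$ explicit where the paper writes ``for some $0<a\ll 1$.''
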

\begin{proof}
Let $A$ be an ample line bundle on $X$. 
Then there exists a sufficiently large positive integer $m$ such that 
$L:=A^{\otimes m}$ is very ample and that 
$$H^q(X, K_X\otimes E\otimes F\otimes \mathcal I(h)\otimes 
L)=0$$ for every $q>0$ by the Serre vanishing 
theorem. We can take a smooth hermitian metric $h_L$ on $L$ such that 
$\sqrt{-1}\Theta_{h_L}(L)\geq 0$. 
By the condition $\sqrt{-1}\Theta_{h}(F)\geq \varepsilon \omega$, 
we have $$\sqrt{-1}(\Theta_{h}(F)-a\Theta_{h_L}
(L))\geq 0$$ for some $0<a\ll 1$. 
On the other hand, we can take a smooth hermitian metric $h_E$ on $E$ such that $\sqrt{-1}\Theta_{h_E}(E)\geq_{Nak}0$ since $E$ is Nakano semi-positive by the assumption. It follows that 
$$\sqrt{-1}\Theta_{h_E}(E)+(a-b)Id_E\otimes\sqrt{-1}\Theta_{h_L}(L)\geq_{Nak}0$$
for some $0<b<a$.
We take a nonzero section $s \in H^{0}(X, L)$. By Theorem \ref{f-thm5.1.}, the multiplication map induced by $\otimes s$ 
\begin{equation*}
\times s: H^q(X, K_X\otimes E\otimes F\otimes \mathcal I(h))
\to H^q(X, K_X\otimes E\otimes F\otimes \mathcal I(h)\otimes 
L)
\end{equation*}
is injective for every $q>0$. 
Thus we obtain that $H^q(X, K_X\otimes E\otimes F\otimes 
\mathcal I(h))=0$ for every $q>0$. 
\end{proof}

\begin{cor}[=Corollary \ref{f-thm1.4}]\label{f-thm1.4.}
Let $X$ be a smooth projective manifold with 
a K\"ahler form $\omega$ and $F$ be a holomorphic line bundle on $X$ with a singular hermitian metric $h$ such that 
$\sqrt{-1}\Theta_{h}(F)\geq \varepsilon \omega$ in the sense of currents for some $\varepsilon >0$. Then for every $q>0$ we have 
$$
H^q(X, K_X\otimes F\otimes \mathcal I(h))=0.
$$ 
\end{cor}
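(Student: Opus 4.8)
The plan is to obtain this Nadel vanishing theorem as a direct specialization of the immediately preceding Corollary \ref{f-thm1.3.}, taking the Nakano semi-positive vector bundle there to be the trivial line bundle on $X$. First I would equip the trivial line bundle $\mathcal{O}_X$ with its standard flat hermitian metric, whose Chern curvature vanishes identically; in particular $\sqrt{-1}\Theta(\mathcal{O}_X)=0\geq_{Nak}0$, so $\mathcal{O}_X$ is Nakano semi-positive. With $E=\mathcal{O}_X$, every hypothesis of Corollary \ref{f-thm1.3.} is then met: $X$ is a smooth projective manifold with K\"ahler form $\omega$, the line bundle $F$ carries the singular hermitian metric $h$ satisfying $\sqrt{-1}\Theta_h(F)\geq\varepsilon\omega$ in the sense of currents, and $E$ is Nakano semi-positive.

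Applying Corollary \ref{f-thm1.3.} then gives
\begin{equation*}
H^q(X,K_X\otimes \mathcal{O}_X\otimes F\otimes \mathcal{I}(h))=0
\end{equation*}
for every $q>0$. Finally I would invoke the canonical isomorphism $K_X\otimes \mathcal{O}_X\otimes F\cong K_X\otimes F$ to rewrite this as $H^q(X,K_X\otimes F\otimes \mathcal{I}(h))=0$ for all $q>0$, which is exactly the asserted statement.

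There is no genuine obstacle here: the only thing to verify is that the trivial bundle qualifies as Nakano semi-positive, and this is immediate from its flat metric. All the analytic substance---the reduction via the Serre vanishing theorem to an auxiliary very ample twist $L=A^{\otimes m}$, and the application of the main injectivity Theorem \ref{f-thm5.1.} to turn the resulting injection into a vanishing---was already carried out in the proof of Corollary \ref{f-thm1.3.}. Consequently the present corollary is a purely formal consequence of that result, and I expect the argument to consist of little more than recording this specialization. (Alternatively, one could repeat verbatim the Serre vanishing argument of Corollary \ref{f-thm1.3.} while invoking Corollary \ref{f-m.} in place of Theorem \ref{f-thm5.1.}, but the specialization above is the most economical route.)
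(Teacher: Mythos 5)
Your proposal is correct and coincides with the paper's own proof, which simply sets $E$ equal to the trivial line bundle in Corollary \ref{f-thm1.3.}; you merely spell out the (immediate) verification that the trivial bundle with its flat metric is Nakano semi-positive. Nothing further is needed.
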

\begin{proof}
It is obvious that Corollary \ref{f-thm1.4.} follows from Corollary \ref{f-thm1.3.} if we let $E$ be the trivial line bundle in Corollary \ref{f-thm1.3.}.
\end{proof}

\begin{thm} [=Theorem \ref{ND}]\label{ND.}
Let $(X,\omega)$ be a compact \ka manifold of dimension $n$,
$m$ a positive integer, $E$ a holomorphic vector bundle on $X$ of rank $r$ and $F$ a pseudo-effective line bundle on $X$ equipped with a singular hermitian metric $h$ with semi-positive curvature current.
If $E$ is Demailly $m$-positive then for any $q\geq 1$ with $m\geq\min\{n-q+1,r\}$ we have
\begin{equation}
H^q(X,K_X\otimes E\otimes F\otimes \mathcal{I}(h))=0 \nonumber
\end{equation}
where $K_X$ is the canonical bundle of $X$ and $\mathcal{I}(h)$
is the multiplier ideal sheaf of $h$.
\end{thm}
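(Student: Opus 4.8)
The plan is to establish the vanishing by the transcendental $L^2$-method, showing that every $\dbar$-closed $L^2$ representative of a class in $H^q(X,K_X\otimes E\otimes F\otimes\mathcal I(h))$ is $\dbar$-exact. First I would apply the Demailly--Peternell--Schneider approximation (Lemma \ref{equi.}) to $(F,h)$ to obtain singular metrics $h_\e$ that are smooth on $Y_\e:=X\setminus Z_\e$, satisfy $\mathcal I(h_\e)=\mathcal I(h)$, and, since $F$ is pseudo-effective so that $\sqrt{-1}\Theta_{h_F}(F)\geq0$, obey $\sqrt{-1}\Theta_{h_\e}(F)\geq-\e\omega$ on $X$. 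On $Y_\e$ I would use the complete \ka forms $\omega_{\e,\delta}:=\omega+\delta\omega_\e\geq\omega$ built exactly as in the proof of Theorem \ref{f-thm5.1.}, so that Lemma \ref{L2} becomes available on the complete manifold $(Y_\e,\omega_{\e,\delta})$.

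The decisive input is the curvature. Fixing a smooth metric $h_E$ on $E$ and using that $E$ is \emph{strictly} Demailly $m$-positive together with the compactness of $X$, I would extract a constant $c_0>0$ with $\sqrt{-1}\Theta_{h_E}(E)\geq_m c_0\,\omega\otimes Id_E$. Tensoring then gives $\sqrt{-1}\Theta_{h_Eh_\e}(E\otimes F)=\sqrt{-1}\Theta_{h_E}(E)+Id_E\otimes\sqrt{-1}\Theta_{h_\e}(F)\geq_m(c_0-\e)\,\omega\otimes Id_E$, which is strictly $m$-positive as soon as $\e<c_0$. By Demailly's computation relating $m$-positivity to the positivity of the curvature operator on $(n,q)$-forms (see Definition \ref{def.44} and \cite{demailly-note,demailly-dbar}), the hypothesis $m\geq\min\{n-q+1,r\}$ guarantees that $A_{\e,\delta}:=[\sqrt{-1}\Theta_{h_Eh_\e}(E\otimes F),\Lambda_{\omega_{\e,\delta}}]$ is positive definite on $(n,q)$-forms everywhere on $Y_\e$, which is exactly the positivity required by Lemma \ref{L2}.

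Next I would represent an arbitrary class by a $\dbar$-closed $(n,q)$-form $u$ with $\|u\|_{h_Eh,\omega}<\infty$ through the De Rham--Weil isomorphism, and observe, as in \eqref{eq5.1}, that $\|u\|_{h_Eh_\e,\omega_{\e,\delta}}\leq\|u\|_{h_Eh,\omega}$ by the monotonicity $h_\e\leq h$ and $\omega_{\e,\delta}\geq\omega$. One then bounds $\int_{Y_\e}\langle A_{\e,\delta}^{-1}u,u\rangle\,dV_{\omega_{\e,\delta}}$ from above (this uniform bound is the crux, discussed below), and Lemma \ref{L2} yields $v_{\e,\delta}$ with $\dbar v_{\e,\delta}=u$ and $\int_{Y_\e}|v_{\e,\delta}|^2\leq\int_{Y_\e}\langle A_{\e,\delta}^{-1}u,u\rangle\,dV_{\omega_{\e,\delta}}$. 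Letting $\delta\to0$ and then $\e\to0$ and extracting a weak $L^2$ limit $v$ on $X$ (the sets $Z_\e$ have measure zero, and $\mathcal I(h_\e)=\mathcal I(h)$ places $v$ in the correct $L^2$-space), I would obtain $\dbar v=u$, so the class is zero and the cohomology vanishes.

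The hard part will be the uniform control of $\int_{Y_\e}\langle A_{\e,\delta}^{-1}u,u\rangle\,dV_{\omega_{\e,\delta}}$ as $\delta\to0$. Since $\omega_{\e,\delta}$ degenerates along $Z_\e$, the smallest eigenvalue of $A_{\e,\delta}$ tends to $0$ there, so the naive pointwise estimate $\langle A_{\e,\delta}^{-1}u,u\rangle\leq\frac{1}{(c_0-\e)q}|u|^2_{\e,\delta}$ is not uniform. The technical heart is therefore the monotonicity of the integrand $\langle A_{\e,\delta}^{-1}u,u\rangle\,dV_{\omega_{\e,\delta}}$ in the \ka metric, which reduces the estimate to its value at $\omega$ and produces the uniform bound $\frac{1}{(c_0-\e)q}\|u\|^2_{h_Eh,\omega}$; this, together with the passage to the weak limit across $Z_\e$, is where the real work lies and is carried out by the method of \cite{fujino-osaka,FM,matsumura4}.
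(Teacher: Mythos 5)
Your proposal is essentially correct in outline, but it takes a genuinely different route from the paper's proof. The paper never solves a $\dbar$-equation and takes no limits at all: using the strict Demailly $m$-positivity of $E$ and the compactness of $X$, it fixes a \emph{single} $\varepsilon_0$ so that $\sqrt{-1}\Theta_{h_E\otimes h_{\varepsilon_0}}(E\otimes F)$ is Demailly $m$-positive on $Y=X\setminus Z_{\varepsilon_0}$, builds \emph{one} complete K\"ahler form $\widetilde\omega$ on $Y$ with bounded local potentials (via a quasi-psh function with log poles along $Z_{\varepsilon_0}$, Lemma \ref{lemma1}), invokes Fujino's De Rham--Weil isomorphism (Lemma \ref{cla1}) to identify $H^q(X,K_X\otimes E\otimes F\otimes \mathcal I(h))$ with the $\dbar$-$L^2$ cohomology on $(Y,\widetilde\omega)$, uses finite-dimensionality of that group to replace it by the harmonic space, and then kills harmonic forms with the Bochner inequality (\ref{4242}): for harmonic $u$ the right-hand side vanishes, so $\int_Y\langle[\sqrt{-1}\Theta,\Lambda_{\widetilde\omega}]u,u\rangle\,dV_{\widetilde\omega}\leq 0$, and \emph{pointwise} strict positivity of the curvature operator forces $u=0$. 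This neatly sidesteps what you correctly identify as your hard part: in the harmonic argument no integrability of $\langle A^{-1}u,u\rangle$ and no monotonicity lemma are ever needed, since only pointwise positivity (which may well degenerate near $Z$ and at infinity) enters. Your H\"ormander-plus-limits route (essentially the Demailly--Peternell--Schneider/Cao method) can be completed, but two points deserve attention. First, the double limit is overkill: by the equisingularity $\mathcal I(h_{\varepsilon})=\mathcal I(h)$ you may fix $\varepsilon=\varepsilon_0$ once and for all, exactly as the paper does, so at most the $\delta$-limit is needed. Second, the monotonicity of $\langle A_{\varepsilon,\delta}^{-1}u,u\rangle\,dV_{\omega_{\varepsilon,\delta}}$ that you invoke is stated in the literature for line bundles or for Nakano (semi-)positive curvature, not for Demailly $m$-positive curvature; to use it you should make explicit the reduction $\sqrt{-1}\Theta_{h_Eh_{\varepsilon}}(E\otimes F)\geq_m(c_0-\varepsilon)\,\omega\otimes Id_E$ combined with Demailly's lemma (so that $A_{\varepsilon,\delta}\geq(c_0-\varepsilon)\,[\omega,\Lambda_{\omega_{\varepsilon,\delta}}]\otimes Id_E>0$ on $(n,q)$-forms under $m\geq\min\{n-q+1,r\}$), whence $A_{\varepsilon,\delta}^{-1}\leq\frac{1}{c_0-\varepsilon}\left([\omega,\Lambda_{\omega_{\varepsilon,\delta}}]\otimes Id_E\right)^{-1}$, and only then apply the scalar monotonicity computation; with that reduction your uniform bound $\frac{1}{(c_0-\varepsilon)q}\|u\|^2_{h_Eh,\omega}$ and the subsequent weak-limit argument do go through.
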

\begin{proof}
We fix a smooth Hermitian metric $h_E$ on $E$ such that the Chern curvature $\sqrt{-1}\Theta_{h_E}(E)$ is Demailly $m$-positive.
For the given singular hermitian
metric $h$ on $F$ with semi-positive curvature current,
by the Demailly-Peternell-Schneider equisingular approximation theorem (Lemma \ref{equi.}), there exist singular hermitian metrics $\{h_{\e} \}_{1\gg \e>0}$ on $F$ with the following properties\,$:$
\begin{itemize}
\item[(a)] $h_{\e}$ is smooth on $Y_\varepsilon:=X \setminus Z_{\e}$, where $Z_{\e}$ is a subvariety on $X$.
\item[(b)] $h_{\varepsilon_2}\leq h_{\varepsilon_1} \leq h_F$ for any $0<\varepsilon_1<\varepsilon_2$.
\item[(c)]$\I{h}= \I{h_{\e}}$  (equisingularity).
\item[(d)]$\sqrt{-1} \Theta_{h_{\e}}(F) \geq -\e \omega$.
\end{itemize}
It follows from the property (a) that the hermitian metric $h_E\otimes h_{\varepsilon}$ on $(E\otimes F)|_{Y_{\varepsilon}}$ is smooth on $Y_{\e}$ for every $h_{\varepsilon}\in \{h_{\e} \}_{1\gg \e>0}$. Further, 
it follows from the property (d) and the compactness of $X$ that
there exists a singular hermitian metric $h_{\varepsilon_0}\in \{h_{\e} \}_{1\gg \e>0}$
on $F$ such that the Chern curvature
$$\sqrt{-1}\Theta_{h_E\otimes h_{\varepsilon_0}}(E\otimes F)
  =\sqrt{-1}\Theta_{h_E}(E)+Id_E\otimes\sqrt{-1}\Theta_{h_{\varepsilon_0}}(F)$$
is Demailly $m$-positive on $Y_{\varepsilon_0}$. 
We denote $Y:=Y_{\varepsilon_0}$ and $Z:=Z_{\varepsilon_0}$ for simplicity. We need the following lemma which is more or less known to
experts(cf. \cite[Lemma 5]{Demailly03}).
\begin{lem} \cite[Lemma 5]{Demailly03}\label{lemma1}
There exists a quasi-psh function $\psi$ on $X$ such that
$\psi=-\infty$ on $Z$ with logarithmic poles along $Z$ and $\psi$
is smooth outside $Z$.
\end{lem}
\noindent By Lemma \ref{lemma1} we know that
there exists a quasi-psh function $\psi$ on $X$ such that $\psi=-\infty$ on $Z$,
with logarithmic poles along $Z$, and is smooth outside $Z$ .
Without loss of generality, we can assume that  $\psi<-e$ on $X$
since the quasi-psh function $\psi$ is upper semicontinuous and bounded above.
We define $$\varphi=1/\log(-\psi).$$ Then it is easy to check that
$\varphi$ is a quasi-psh function on $X$ and $0<\varphi<1$.
Thus, we can take a positive constant $\alpha$
such that $$\sqrt{-1}\partial\overline{\partial}\varphi+\alpha\omega>0$$ on $Y=X-Z.$
It follows from the Hopf--Rinow lemma that
\begin{equation}\label{Fuj.}
\widetilde{\omega}=\omega+(\sqrt{-1}\partial\overline{\partial}\varphi+\alpha\omega)
\end{equation}
will be a complete K\"ahler form on $Y$ if we choose $\alpha \gg0$.
For the details we recommend the reader to see \cite[Section 3]{fujino-osaka}. We denote by $L^{n,q}_{(2)}(Y,E\otimes F)_{\widetilde{\omega},h_E\otimes h_{\varepsilon_0}}$ 
the space of square integrable $E\otimes F$-valued $(n,q)$-forms on $Y$
with respect to the metrics $\widetilde{\omega}$ and $h_E\otimes h_{\varepsilon_0}$ and by 
$H^{n,q}_{(2)}(Y,E\otimes F)_{\widetilde{\omega},h_E\otimes h_{\varepsilon_0}}$ the $\dbar$-$L^2$ cohomology of
$L^{n,q}_{(2)}(Y,E\otimes F)_{\widetilde{\omega},h_E\otimes h_{\varepsilon_0}}$. 
The important point for our purpose is that the \ka metric $\widetilde{\omega}$ locally admits a bounded potential on a neighborhood of every point $p \in X$ (not $Y$). This means that for every point $p$ in $X$, there exist an open neighborhood $U$ of $p$ and a bounded function $\Psi$ on $U$ such that ${\omega}=\sqrt{-1}\partial\bar\partial\Psi$ on $U\setminus Z$ (see \cite{fujino-osaka,matsumura4}). 
By using this special property and the classical $L^2$-estimates for $\bar\partial$-equations on complete K\"ahler manifolds we can prove 
the following Lemma \ref{cla1} which plays an important role in the proof of Theorem \ref{ND}. For the details of the proof of Lemma \ref{cla1} see \cite{fujino-osaka,matsumura4}.
\begin{lem}\cite[Claim 1]{fujino-osaka}\label{cla1}
Under the same notation as above,
we have the following De Rham-Weil isomorphism between
the $\check{\rm{C}}$ech cohomology and the $\dbar$-$L^2$ cohomology
\begin{equation}\label{iso..}
  \check{H}^{q}(X, K_X\otimes E\otimes F\otimes \mathcal I(h))\simeq
H^{n,q}_{(2)}(Y,E\otimes F)_{\widetilde{\omega},h_E\otimes h_{\varepsilon_0}}. \nonumber
\end{equation}
\end{lem}
\noindent By Lemma \ref{cla1} it follows that
\begin{equation}\label{iso..}
H^{n,q}_{(2)}(Y,E\otimes F)_{\widetilde{\omega},h_E\otimes h_{\varepsilon_0}}
\simeq \mathbb{H}^{n,q}_{(2)}(Y,E\otimes F)_{\widetilde{\omega},h_E\otimes h_{\varepsilon_0}} \nonumber
\end{equation}
since the dimension of $H^{n,q}_{(2)}(Y,E\otimes F)_{\widetilde{\omega},h_E\otimes h_{\varepsilon_0}}$
is finite, where $\mathbb{H}^{n,q}_{(2)}(Y,E\otimes F)_{\widetilde{\omega},h_E\otimes h_{\varepsilon_0}}$
is the corresponding harmonic space of
$L^{n,q}_{(2)}(Y,E\otimes F)_{\widetilde{\omega},h_E\otimes h_{\varepsilon_0}}$. Now we see that: to prove Theorem \ref{ND} 
it's enough to show the vanishing
$$\mathbb{H}^{n,q}_{(2)}(Y,E\otimes F)_{\widetilde{\omega},h_E\otimes h_{\varepsilon_0}}=0$$
of the harmonic space for any $q\geq 1$ with $m\geq\min\{n-q+1,r\}$.

In fact, since the hermitian metric $h_E\otimes h_{\varepsilon}$
on $(E\otimes F)|_{Y}$ is smooth and
the Chern curvature $\sqrt{-1}\Theta_{h_E\otimes h_{\varepsilon_0}}(E\otimes F)$
is Demailly $m$-positive on $Y$ the curvature operator
$[\sqrt{-1}\Theta_{h_E\otimes h_{\varepsilon_0}}(E\otimes F),\Lambda_{\widetilde{\omega}}]$
on the vector bundle $\Lambda^{n,q}T_Y^{*}\otimes (E\otimes F)\rightarrow Y$ is positive
for any $q\geq 1$ with $m\geq\min\{n-q+1,r\}$
(cf. Lemma (7.2) in \cite{demailly-note}, page 341). This implies that
\begin{equation} \label{4242}
 \int_{Y}([\sqrt{-1}\Theta_{h_E\otimes h_{\varepsilon_0}}(E\otimes F),
 \Lambda_{\widetilde{\omega}}]u,u)dV_{\widetilde{\omega}}\leq
 \|\overline{\partial}u\|^2+\|\overline{\partial}^{*}u\|^2
\end{equation}
for any $u\in$Dom$\overline{\partial}\cap$ Dom $\overline{\partial}^{*}$
(cf. \cite{demailly-note}, p. 370).

Now if $u$ is an element in
$\mathbb{H}^{n,q}_{(2)}(Y,E\otimes F)_{\widetilde{\omega},h_E\otimes h_{\varepsilon_0}}$,
that is, $u$ 
is an element in $L^{n,q}_{(2)}(Y,E\otimes F)_{\widetilde{\omega},h_E\otimes h_{\varepsilon_0}}$ such that $$\Delta''u=0\,\, (\,\,\text{or}\,\, \overline{\partial}u=\overline{\partial}^{*}u=0),$$
then by the positivity of the curvature operator
$[\sqrt{-1}\Theta_{h_E\otimes h_{\varepsilon_0}}(E\otimes F),\Lambda_{\widetilde{\omega}}]$
and the inequality (\ref{4242}) we conclude that $u=0$. It follows that
$$\mathbb{H}^{n,q}_{(2)}(Y,E\otimes F)_{\widetilde{\omega},h_E\otimes h_{\varepsilon_0}}=0$$
for any $q\geq1$ with $m\geq\min\{n-q+1,r\}$. This completes the proof of Theorem \ref{ND.}.
\end{proof}

\begin{cor}[=Corollary \ref{.Griffiths}] \label{.Griffiths.}
Let $E$ be a holomorphic vector bundle 
over an $n$-dimensional compact \ka manifold $X$ and $F$ a pseudo-effective line bundle on $X$ equipped with a singular hermitian metric $h$ with semi-positive curvature current.
If $E$ is Griffiths positive then 
\begin{equation}
H^n(X,K_X\otimes E\otimes F\otimes \mathcal{I}(h))=0.\nonumber
\end{equation}
\end{cor}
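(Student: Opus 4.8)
The plan is to derive this statement as a direct specialization of Theorem \ref{ND}. The key observation is purely numerical: to obtain the vanishing of $H^n(X, K_X \otimes E \otimes F \otimes \mathcal{I}(h))$ we set $q = n$ in Theorem \ref{ND}, so that $n - q + 1 = 1$ and hence $\min\{n-q+1, r\} = \min\{1, r\} = 1$, using $r \geq 1$. Thus the hypothesis $m \geq \min\{n-q+1,r\}$ of Theorem \ref{ND} holds as soon as $m \geq 1$, and it suffices to take $m = 1$.

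First I would recall from Section \ref{pre.} (see the remark following Definition \ref{def.44}) that the Demailly $1$-positivity coincides with the Griffiths positivity. Since $E$ is assumed to be Griffiths positive, $E$ is therefore Demailly $1$-positive. Then, applying Theorem \ref{ND} with $m = 1$ and $q = n$ to the Griffiths positive (hence Demailly $1$-positive) bundle $E$ and the pseudo-effective line bundle $F$ equipped with its singular hermitian metric $h$ of semi-positive curvature current, one immediately obtains $H^n(X, K_X \otimes E \otimes F \otimes \mathcal{I}(h)) = 0$, which is exactly the desired conclusion.

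There is essentially no obstacle here, as the corollary is a clean special case of the main vanishing theorem; the only point that needs to be checked is the elementary identification of Griffiths positivity with Demailly $1$-positivity together with the trivial inequality $\min\{1, r\} = 1$ for $r \geq 1$, which guarantees that the top degree $q = n$ falls precisely within the range covered by Theorem \ref{ND} at the value $m = 1$.
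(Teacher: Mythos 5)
Your proposal is correct and follows exactly the paper's own argument: identify Griffiths positivity with Demailly $1$-positivity and then invoke Theorem \ref{ND} with $m=1$ and $q=n$. The only difference is that you spell out the numerical check $\min\{n-q+1,r\}=\min\{1,r\}=1$, which the paper leaves implicit.
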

\begin{proof}
Note that: for any holomorphic vector bundle $E$ the Griffiths positivity is equivalent to the Demailly 1-positivity. Thus Corollary \ref{.Griffiths.} follows from Theorem \ref{ND.}
\end{proof}

\begin{cor}[=Corollary \ref{.Nakano}]  \label{.Nakano.}
Let $E$ be a holomorphic vector bundle over a compact \ka manifold $X$ and $F$ a pseudo-effective line bundle on $X$ equipped with a singular hermitian metric $h$ with semi-positive curvature current.
If $E$ is Nakano positive then for any $q\geq 1$ we have
\begin{equation}
H^q(X,K_X\otimes E\otimes F\otimes \mathcal{I}(h))=0.\nonumber
\end{equation}
\end{cor}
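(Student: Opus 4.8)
The plan is to deduce this vanishing directly from Theorem \ref{ND.}, so the entire task reduces to matching the hypotheses; there is no new analysis to perform. The key is the relationship between the two positivity notions recorded just after Definition \ref{def.44}: for a holomorphic vector bundle $E$ of rank $r$ over the $n$-dimensional compact \ka manifold $X$, the Nakano positivity is precisely the Demailly $m$-positivity for every $m \geq \min\{n,r\}$. Hence, since $E$ is Nakano positive by assumption, $E$ is in particular Demailly $m$-positive once we fix the index $m := \min\{n,r\}$.

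Next, I would check that this choice of $m$ satisfies the numerical hypothesis $m \geq \min\{n-q+1,r\}$ demanded by Theorem \ref{ND.}. For any $q \geq 1$ we have $n-q+1 \leq n$, and therefore
\begin{equation}
\min\{n-q+1,r\} \leq \min\{n,r\} = m. \nonumber
\end{equation}
Thus the inequality $m \geq \min\{n-q+1,r\}$ holds automatically for every $q \geq 1$; no value of $q$ in the range $q \geq 1$ is excluded, which is exactly why the conclusion of this corollary is stated for all $q \geq 1$ without any restriction.

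Finally, applying Theorem \ref{ND.} with this fixed $m = \min\{n,r\}$, together with the pseudo-effective line bundle $F$ equipped with its singular hermitian metric $h$ of semi-positive curvature current, yields
\begin{equation}
H^q(X, K_X \otimes E \otimes F \otimes \mathcal{I}(h)) = 0 \nonumber
\end{equation}
for every $q \geq 1$, which is the desired conclusion. I expect no genuine obstacle in this argument: the only substantive point is the elementary observation that the Demailly index witnessing Nakano positivity, namely $m = \min\{n,r\}$, already dominates $\min\{n-q+1,r\}$ for all $q \geq 1$, so Theorem \ref{ND.} applies uniformly across the full cohomological range.
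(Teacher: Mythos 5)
Your proposal is correct and matches the paper's own proof essentially verbatim: both reduce the statement to Theorem \ref{ND.} by identifying Nakano positivity with Demailly $m$-positivity for $m=\min\{n,r\}$ and then observing that $\min\{n,r\}\geq\min\{n-q+1,r\}$ for every $q\geq 1$. No further comparison is needed.
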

\begin{proof} 
Let $\text{dim}X=n$.
Note that: for any holomorphic vector bundle $E$ the Nakano positivity is equivalent to the Demailly $m$-positivity for any integer $m$ with
$m\geq \min\{n,r\}$. But $$\min\{n,r\}\geq\min\{n-q+1,r\}$$ for any $q\geq1$. Thus Corollary \ref{.Nakano.} follows from Theorem \ref{ND.}.
\end{proof}

\begin{thm}[=Theorem \ref{hehe}]\label{hehe.}
Let $E$ be a holomorphic vector bundle of rank $r$ over an $n$-dimensional compact \ka manifold $X$, $m$ a positive integer.
Assume that $D=\sum^t_{i=1}a_iD_i$ is an effective $\mathbb{Q}$-divisor in X with normal crossings and denote $D'=\sum^t_{i=1}(a_i-[a_i])D_i$.
If $E$ is Demailly $m$-positive then for any $q\geq 1$ with $m\geq\min\{n-q+1,r\}$ we have
\begin{equation}
H^q(X,K_X\otimes E \otimes \mathcal{O}(D'))=0.\nonumber
\end{equation}
\end{thm}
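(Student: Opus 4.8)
The plan is to deduce the statement directly from Theorem \ref{ND.} by equipping the line bundle attached to the $\mathbb{Q}$-divisor $D$ with its natural divisorial singular metric. First I would choose, for each $i$, a global section $s_i$ cutting out the prime divisor $D_i$ together with a smooth hermitian metric on $\mathcal{O}(D_i)$, and form the singular hermitian metric $h$ whose local weight is $\varphi=\sum_{i=1}^t a_i\log|s_i|$. By the Poincar\'e--Lelong formula the curvature current of $h$ equals $\sum_{i=1}^t a_i[D_i]$, which is a positive current because all $a_i\geq 0$; hence the associated line bundle $F=\mathcal{O}(D)$ is pseudo-effective with semi-positive curvature current, exactly as required in Theorem \ref{ND.}.

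Next I would identify the multiplier ideal and the resulting twist. Since $D$ has normal crossings, the computation recalled just before the statement gives $\mathcal{I}(h)=\mathcal{I}(\varphi)=\mathcal{O}(-[D])$ with $[D]=\sum_i[a_i]D_i$. Consequently the coherent sheaf $F\otimes\mathcal{I}(h)$ is $\mathcal{O}(D)\otimes\mathcal{O}(-[D])=\mathcal{O}(D-[D])=\mathcal{O}(D')$, where $D'=\sum_i(a_i-[a_i])D_i$ is precisely the fractional part. Because $E$ is Demailly $m$-positive by hypothesis and $F$ is pseudo-effective with semi-positive curvature current, all the hypotheses of Theorem \ref{ND.} are met; applying it to $E$ and $F$ yields, for every $q\geq 1$ with $m\geq\min\{n-q+1,r\}$,
$$H^q(X,K_X\otimes E\otimes F\otimes\mathcal{I}(h))=H^q(X,K_X\otimes E\otimes\mathcal{O}(D'))=0,$$
which is the assertion.

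The step I expect to require the most care is the identification $F\otimes\mathcal{I}(h)\cong\mathcal{O}(D')$, rather than any analytic input. The subtlety is that $\mathcal{O}(D)$ is only a $\mathbb{Q}$-line bundle, so the equality must be read at the level of the genuine coherent sheaf obtained by tensoring the $L^2$-sections of the divisorial metric with the multiplier ideal; here the normal crossing hypothesis is essential, since it is exactly what makes $\mathcal{I}(D)=\mathcal{O}(-[D])$ hold and lets one split off the integral part $[D]$ cleanly, leaving the fractional twist $\mathcal{O}(D')$. By contrast, verifying the positivity hypotheses of Theorem \ref{ND.} is immediate: the curvature current of $h$ is the current of integration $\sum_i a_i[D_i]\geq 0$, and no further positivity of $F$ is needed because the Demailly $m$-positivity of $E$ already supplies all the curvature positivity that the vanishing requires.
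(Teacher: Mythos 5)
Your proposal is correct and takes essentially the same approach as the paper: the paper's proof consists precisely of the identification $\mathcal{O}(D)\otimes\mathcal{I}(D)=\mathcal{O}(D')$ (via the normal-crossing formula $\mathcal{I}(D)=\mathcal{O}(-[D])$ recalled before the statement) followed by an application of Theorem \ref{ND.} to $F=\mathcal{O}(D)$ equipped with its divisorial singular metric. You merely spell out what the paper leaves implicit --- the Poincar\'e--Lelong computation giving the semi-positive curvature current $\sum_i a_i[D_i]\geq 0$ and hence pseudo-effectivity --- and you even flag the $\mathbb{Q}$-line-bundle subtlety that the paper itself glosses over.
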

\begin{proof}
For the effective $\mathbb{Q}$-divisor $D=\sum^t_{i=1}a_iD_i$ in X with normal crossings we have $$D\otimes \mathcal{I}(D)=\mathcal{O}(D')$$ where $D'=\sum^t_{i=1}(a_i-[a_i])D_i$. Thus Theorem \ref{hehe.} also follows from Theorem \ref{ND.}
\end{proof}

\begin{cor} [=Corollary \ref{hdhd}]\label{hdhd.}
Let $E$ be a holomorphic vector bundle of rank $r$ over an $n$-dimensional compact \ka manifold $X$ and $D=\sum^t_{i=1}a_iD_i$ an effective normal crossing $\mathbb{Q}$-divisor $D$ in $X$ with $0\leq a_i<1$. If $E$ is Demailly $m$-positive then for any $q\geq 1$ with $m\geq\min\{n-q+1,r\}$ we have
\begin{equation}
H^q(X,K_X\otimes E\otimes D)=0.\nonumber
\end{equation}
\end{cor}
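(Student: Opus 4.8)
The plan is to derive Corollary \ref{hdhd.} directly from Theorem \ref{hehe.} by observing that it is precisely the special case in which every coefficient already lies in the unit interval. First I would note that under the hypothesis $0 \leq a_i < 1$ we have $[a_i] = 0$ for all $i$, so that the rounded-down divisor $[D] = \sum_{i=1}^t [a_i] D_i$ vanishes. Consequently the fractional part $D' = \sum_{i=1}^t (a_i - [a_i]) D_i$ coincides with $D$ itself, and the multiplier ideal sheaf satisfies $\mathcal{I}(D) = \mathcal{O}(-[D]) = \mathcal{O}_X$, so that $D \otimes \mathcal{I}(D) = \mathcal{O}(D') = \mathcal{O}(D)$.

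With this identification in hand, I would apply Theorem \ref{hehe.} to the given Demailly $m$-positive bundle $E$ and the effective normal crossing $\mathbb{Q}$-divisor $D$. The theorem yields
\begin{equation*}
H^q(X, K_X \otimes E \otimes \mathcal{O}(D')) = 0
\end{equation*}
for every $q \geq 1$ with $m \geq \min\{n-q+1, r\}$. Substituting $D' = D$ and interpreting $\mathcal{O}(D)$ in the notation of the corollary as the line bundle associated to $D$, this is exactly the asserted vanishing $H^q(X, K_X \otimes E \otimes D) = 0$ in the same range of indices.

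The proof is therefore entirely formal, consisting of the reduction $[a_i] = 0 \Rightarrow D' = D$ together with a direct citation of Theorem \ref{hehe.}. There is no genuine analytic obstacle here, since all the hard work—the equisingular approximation of Lemma \ref{equi.}, the construction of the complete K\"ahler metric $\widetilde{\omega}$, and the harmonic-space vanishing via the positivity of the curvature operator—has already been carried out in the proof of Theorem \ref{ND.} and inherited by Theorem \ref{hehe.}. The only point requiring a word of care is the bookkeeping on the range $m \geq \min\{n-q+1, r\}$, which is simply transcribed unchanged from the parent statement.
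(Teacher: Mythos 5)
Your proposal is correct and follows the paper's own proof exactly: the hypothesis $0 \leq a_i < 1$ gives $[a_i]=0$, hence $D'=D$, and the vanishing is then a direct citation of Theorem \ref{hehe.}. The extra remark about $\mathcal{I}(D)=\mathcal{O}_X$ is harmless but not needed for the reduction.
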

\begin{proof}
Let $D=\sum^t_{i=1}a_iD_i$ be an effective normal crossing $\mathbb{Q}$-divisor $D$ in $X$ with $0\leq a_i<1$. Then we have the equation 
$$D'=\sum^t_{i=1}(a_i-[a_i])D_i=\sum^t_{i=1}a_iD_i=D.$$ 
Therefore, Corollary \ref{hdhd.} follows from Theorem \ref{hehe.}.   \end{proof}

\begin{thm} [=Theorem \ref{thm2}]\label{thm2.}
Let $(X,\omega)$ be a compact \ka manifold of dimension $n$ and $L$ be a holomorphic line bundle on $X$ with  a singular hermitian metric $h$ such that $\sqrt{-1}\Theta_{h}(L)\geq\delta\omega$
for some constant $\delta>0$. If $(E,h_E)$ is an hermitian holomorphic vector bundle on $X$ of rank $r$ such that
\begin{equation}\nonumber
\sqrt{-1}\Theta_{h_E}(E)+\tau\omega\otimes Id_E\geq_{m}0
\end{equation}
for some constant $\tau<\delta$, then for any $q\geq 1$ with $m\geq\min\{n-q+1,r\}$ we have
\begin{equation}
  H^{q}(X,K_X \otimes E\otimes L\otimes\mathcal{I}(h))=0. \nonumber
\end{equation}
\end{thm}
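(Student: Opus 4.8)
The plan is to prove Theorem \ref{thm2.} by essentially transcribing the transcendental argument already used for Theorem \ref{ND.}, the only genuinely new ingredient being a curvature estimate that turns the strict positivity $\sqrt{-1}\Theta_{h}(L)\geq\delta\omega$ of the singular metric on $L$, together with the gap hypothesis $\tau<\delta$, into Demailly $m$-positivity of $E\otimes L$. First I would fix a smooth hermitian metric $h_E$ on $E$ realizing $\sqrt{-1}\Theta_{h_E}(E)+\tau\omega\otimes Id_E\geq_m 0$. Since $\sqrt{-1}\Theta_{h}(L)\geq\delta\omega\geq 0$, the metric $h$ has semi-positive curvature current, so the Demailly--Peternell--Schneider equisingular approximation theorem (Lemma \ref{equi.}), applied to the lower bound $\delta\omega$, produces a family $\{h_{\e}\}_{1\gg\e>0}$ on $L$ that is smooth on $Y_{\e}:=X\setminus Z_{\e}$, satisfies $\I{h}=\I{h_{\e}}$, and obeys $\sqrt{-1}\Theta_{h_{\e}}(L)\geq\delta\omega-\e\omega$ on $X$ (the definite bound $\delta\omega$ replacing the semi-positivity that was used in Theorem \ref{ND.}).

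The key computation is then as follows. Choosing and fixing $\e_0>0$ so small that $\delta-\tau-\e_0>0$, and writing $Y:=Y_{\e_0}$, $Z:=Z_{\e_0}$, I would estimate the Chern curvature of $E\otimes L$ equipped with $h_E\otimes h_{\e_0}$:
\begin{align*}
\sqrt{-1}\Theta_{h_E\otimes h_{\e_0}}(E\otimes L)
&=\sqrt{-1}\Theta_{h_E}(E)+Id_E\otimes\sqrt{-1}\Theta_{h_{\e_0}}(L)\\
&\geq_m\bigl(\sqrt{-1}\Theta_{h_E}(E)+\tau\omega\otimes Id_E\bigr)+(\delta-\tau-\e_0)\,\omega\otimes Id_E\\
&\geq_m (\delta-\tau-\e_0)\,\omega\otimes Id_E>_m 0
\end{align*}
on $Y$. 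Thus $(E\otimes L,h_E\otimes h_{\e_0})$ is strictly Demailly $m$-positive \emph{as a hermitian form} on $Y$. I want to stress, exactly as in Theorem \ref{ND.}, that what the argument needs is the strict $m$-positivity of the curvature form, a metric-independent notion; I will \emph{not} need a uniform lower bound with respect to the large complete metric introduced next.

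Next I would set up the $L^2$ machinery on $Y$ verbatim as in the proof of Theorem \ref{ND.}: Lemma \ref{lemma1} gives a quasi-psh function $\psi$ with logarithmic poles along $Z$, and after replacing it by $\varphi=1/\log(-\psi)$ I build a complete K\"ahler form $\ome=\omega+(\deldel\varphi+\alpha\omega)$ on $Y$ (for $\alpha\gg 0$) that admits bounded local potentials near every point of $X$. Lemma \ref{cla1} then yields the De Rham--Weil isomorphism $\check H^{q}(X,K_X\otimes E\otimes L\otimes\I{h})\cong H^{n,q}_{(2)}(Y,E\otimes L)_{\ome,\,h_E\otimes h_{\e_0}}$, and finiteness of dimension identifies the right-hand side with the harmonic space $\mathbb{H}^{n,q}_{(2)}(Y,E\otimes L)_{\ome,\,h_E\otimes h_{\e_0}}$. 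It therefore suffices to prove this harmonic space vanishes for $q\geq 1$ with $m\geq\min\{n-q+1,r\}$.

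The vanishing follows from the Bochner--Kodaira--Nakano inequality on the complete manifold $(Y,\ome)$, valid for every $u\in\mathrm{Dom}\,\dbar\cap\mathrm{Dom}\,\dbar^{*}$ by the standard density argument for complete metrics. By Demailly's algebraic lemma (Lemma (7.2) in \cite{demailly-note}, p.~341), the strict $m$-positivity of $\sqrt{-1}\Theta_{h_E\otimes h_{\e_0}}(E\otimes L)$ makes the curvature operator $[\sqrt{-1}\Theta_{h_E\otimes h_{\e_0}}(E\otimes L),\Lambda_{\ome}]$ positive definite on $(n,q)$-forms precisely when $m\geq\min\{n-q+1,r\}$. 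For a harmonic $u$ one has $\dbar u=\dbar^{*}u=0$, so
$$0=\|\dbar u\|^2+\|\dbar^{*}u\|^2\geq\int_{Y}\langle[\sqrt{-1}\Theta_{h_E\otimes h_{\e_0}}(E\otimes L),\Lambda_{\ome}]u,u\rangle\,dV_{\ome}\geq 0,$$
and pointwise positivity of the integrand forces $u=0$. The one point that requires care, and which I expect to be the main obstacle to writing out cleanly, is exactly this last transition: justifying that strict $m$-positivity of the curvature form yields positivity of the commutator computed with the \emph{complete} metric $\ome$ rather than $\omega$, so that the relevant eigenvalues remain positive and nothing is lost upon passing from the compact bound to the non-compact complete setting. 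Everything else is a direct adaptation of the proof of Theorem \ref{ND.}.
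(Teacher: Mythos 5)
Your proposal is correct and follows essentially the same route as the paper: DPS equisingular approximation with the improved bound $\sqrt{-1}\Theta_{h_{\e}}(L)\geq(\delta-\e)\omega$, a choice of $\e_0<\delta-\tau$ making $(E\otimes L,h_E\otimes h_{\e_0})$ Demailly $m$-positive on $Y_{\e_0}$, and then the identical $L^2$/harmonic-space machinery of Theorem \ref{ND.} (complete K\"ahler metric with bounded local potentials, De Rham--Weil isomorphism, Demailly's Lemma (7.2) plus Bochner--Kodaira--Nakano). The point you flag at the end is handled the same way in the paper: Demailly's lemma is a pointwise algebraic statement, so strict $m$-positivity of the curvature form gives pointwise positive-definiteness of $[\sqrt{-1}\Theta,\Lambda_{\ome}]$ for the complete metric $\ome$ as well, which is all the vanishing argument needs.
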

\begin{proof} 
Let $h$ be a singular hermitian metric on $L$ such that 
$i\Theta_{h}(L)\geq\delta\omega$ for some constant $\delta>0$.
For the given singular hermitian metric $h$ on $L$,
by the Demailly-Peternell-Schneider equisingular approximation theorem (Lemma \ref{equi.}), there exist singular hermitian metrics $\{h_{\e} \}_{1\gg \e>0}$ on $F$ with the following properties\,$:$
\begin{itemize}
\item[(a)] $h_{\e}$ is smooth on $Y_\varepsilon:=X \setminus Z_{\e}$, where $Z_{\e}$ is a subvariety on $X$.
\item[(b)] $h_{\varepsilon_2}\leq h_{\varepsilon_1} \leq h_F$ for any $0<\varepsilon_1<\varepsilon_2$.
\item[(c)]$\I{h}= \I{h_{\e}}$  (equisingularity).
\item[(d)]$\sqrt{-1} \Theta_{h_{\e}}(F) \geq (\delta -\e )\omega$.
\end{itemize}
Therefore we can choose a singular hermitian metric $h_{\varepsilon_0}\in \{h_{\e} \}_{1\gg \e>0}$ on $F$ with $$0<\e_0<0.5(\delta-\tau)$$ such that $h_{\e_0}$ is smooth on $Y_{\varepsilon_0}:=X \setminus Z_{\e_0}$ and $$\sqrt{-1} \Theta_{h_{\e_0}}(F) \geq (\delta -\e_0 )\omega.$$
It follows that we have the following estimates 
on the open manifold $Y_{\varepsilon_0}$
\begin{equation} 
\begin{split}\nonumber
&\sqrt{-1}\Theta_{h_E\otimes h_{\varepsilon_0}}(E\otimes F)\\
&=\sqrt{-1}\Theta_{h_E}(E)+\sqrt{-1}\Theta_{h_{\varepsilon_0}}(F)\otimes Id_E\\
&\geq_{m} \sqrt{-1}\Theta_{h_E}(E)+(\delta -\e_0 )\omega\otimes Id_E\\
&>_m\sqrt{-1}\Theta_{h_E}(E)+\tau \omega\otimes Id_E
\geq_{m}0\\
\end{split}
\end{equation}
which implies that the hermitian holomorphic vector bundle 
$(E\otimes F,h_E\otimes h_{\varepsilon_0})$
is Demailly $m$-positive on $Y_{\varepsilon_0}$. The rest is essentially the same as that in the proof of Theorem \ref{ND.} and we omit it for simplicity.
\end{proof}

\begin{cor}[=Corollary \ref{thm3}] \label{thm3.}
Let $(X,\omega)$ be a compact \ka manifold and $L$ a holomorphic line bundle on $X$ with a singular hermitian metric $h_L$ such that $i\Theta_{h}(L)\geq\delta\omega$
for some constant $\delta>0$. If $(E,h_E)$ is an hermitian holomorphic vector bundle on $X$ of rank $r$ such that
\begin{equation}\nonumber
\sqrt{-1}\Theta_{h_E}(E)+\tau Id_E\otimes\omega\geq_{Nak}0
\end{equation}
for some constant $\tau<\delta$, then for any $q\geq 1$ we have
\begin{equation}
  H^{q}(X,K_X \otimes E\otimes L\otimes\mathcal{I}(h))=0. \nonumber
\end{equation}
\end{cor}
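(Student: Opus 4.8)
The plan is to deduce Corollary \ref{thm3.} directly from Theorem \ref{thm2.} by reinterpreting the Nakano semi-positivity hypothesis as a Demailly $m$-semi-positivity hypothesis and then checking the numerical constraint on $m$. Throughout write $n=\dim X$ and let $r$ denote the rank of $E$.

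First I would recall from the discussion following Definition \ref{def.44} that for any hermitian holomorphic vector bundle the Nakano positivity coincides with the Demailly $m$-positivity whenever $m\geq\min\{n,r\}$. The same equivalence holds verbatim for the semi-positive notions: since every tensor in $TX\otimes E$ has rank at most $\min\{n,r\}$, for such $m$ the restriction ``of rank $\leq m$'' is vacuous, so Demailly $m$-semi-positivity reduces to semi-positivity tested against all tensors, that is, to Nakano semi-positivity. Applying this to the hermitian form $\sqrt{-1}\Theta_{h_E}(E)+\tau\,Id_E\otimes\omega$, the hypothesis $\sqrt{-1}\Theta_{h_E}(E)+\tau\,Id_E\otimes\omega\geq_{Nak}0$ yields $\sqrt{-1}\Theta_{h_E}(E)+\tau\,\omega\otimes Id_E\geq_{m}0$ with the single choice $m=\min\{n,r\}$.

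Next I would verify the inequality $\min\{n,r\}\geq\min\{n-q+1,r\}$ for every $q\geq1$, which is immediate from $n\geq n-q+1$. Hence with $m=\min\{n,r\}$ both the Demailly $m$-semi-positivity hypothesis and the constraint $m\geq\min\{n-q+1,r\}$ required by Theorem \ref{thm2.} are satisfied simultaneously for all $q\geq1$. Invoking Theorem \ref{thm2.} then gives $H^{q}(X,K_X\otimes E\otimes L\otimes\mathcal{I}(h))=0$ for every $q\geq1$, which is the assertion.

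Since the argument is entirely a matter of matching hypotheses, I do not expect any genuine analytic obstacle; the only point deserving care is confirming that the Nakano/Demailly-$m$ equivalence passes both to the semi-positive setting and to the twisted curvature form $\sqrt{-1}\Theta_{h_E}(E)+\tau\,Id_E\otimes\omega$. Once this is granted, the corollary is a clean specialization of Theorem \ref{thm2.}, exactly parallel to the way Corollary \ref{.Nakano.} was deduced from Theorem \ref{ND.}.
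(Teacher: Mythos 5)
Your proposal is correct and follows essentially the same route as the paper: both reduce the Nakano semi-positivity hypothesis to the Demailly $m$-semi-positivity hypothesis of Theorem \ref{thm2.} (your choice $m=\min\{n,r\}$ together with the check $\min\{n,r\}\geq\min\{n-q+1,r\}$ mirrors exactly the paper's deduction of Corollary \ref{.Nakano.} from Theorem \ref{ND.}). The only cosmetic difference is that the paper invokes the implication ``Nakano semi-positive $\Rightarrow$ $m$-semi-positive'' directly, which holds for every $m$ since rank $\leq m$ tensors form a subset of all tensors, whereas you route it through the equivalence at $m\geq\min\{n,r\}$; both are valid.
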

\begin{proof}
If $(E,h_E)$ is an hermitian holomorphic vector bundle on $X$ of rank $r$ such that
\begin{equation}\nonumber
\sqrt{-1}\Theta_{h_E}(E)+\tau Id_E\otimes\omega\geq_{Nak}0
\end{equation}
for some constant $\tau<\delta$, then we have
\begin{equation}\nonumber
\sqrt{-1}\Theta_{h_E}(E)+\tau\omega\otimes Id_E\geq_{m}0
\end{equation}
for some constant $\tau<\delta$ and for any $q\geq 1$ with $m\geq\min\{n-q+1,r\}$. Therefore, Corollary \ref{thm3.} follows from Theorem \ref{thm2.}.
\end{proof}

\begin{cor} [=Corollary \ref{Nadel}]\label{Nadel.}
Let $(X,\omega)$ be a compact K\"{a}hler manifold and $L$ be 
a holomorphic line bundle on $X$ with a singular hermitian
metric $h$ such that $i\Theta_{h}(L)\geq\delta\omega$
for some constant $\delta>0$. Then for any $q\geq1$ we have
\begin{equation}
  H^{q}(X,K_X \otimes L\otimes\mathcal{I}(h))=0.\nonumber
\end{equation}
\end{cor}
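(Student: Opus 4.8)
The plan is to obtain Corollary \ref{Nadel} as an immediate specialization of Corollary \ref{thm3} (equivalently of Theorem \ref{thm2}), by letting the auxiliary bundle $E$ degenerate to the trivial line bundle. Since the statement involves only $K_X \otimes L \otimes \mathcal{I}(h)$, the strategy is to recover this twist from the more general $K_X \otimes E \otimes L \otimes \mathcal{I}(h)$ by choosing $E$ so that tensoring by $E$ acts as the identity on cohomology, that is, $E = \mathcal{O}_X$.

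First I would equip the trivial line bundle $E = \mathcal{O}_X$ with its flat hermitian metric $h_E$, for which $\sqrt{-1}\Theta_{h_E}(E) = 0$ identically. With this choice the Nakano semi-positivity hypothesis of Corollary \ref{thm3},
\begin{equation*}
\sqrt{-1}\Theta_{h_E}(E) + \tau\, Id_E \otimes \omega \geq_{Nak} 0,
\end{equation*}
collapses to $\tau \omega \geq_{Nak} 0$. As $\omega$ is a \ka form this holds for $\tau = 0$, and since $\delta > 0$ by assumption, the required strict bound $\tau < \delta$ is satisfied by $\tau = 0$. Thus all hypotheses of Corollary \ref{thm3} are met for the pair $(E,L)$ with the data just specified.

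Next I would apply Corollary \ref{thm3} to this configuration, which yields $H^{q}(X, K_X \otimes E \otimes L \otimes \mathcal{I}(h)) = 0$ for every $q \geq 1$. Because $E = \mathcal{O}_X$ is trivial, there is a canonical isomorphism $K_X \otimes E \otimes L \otimes \mathcal{I}(h) \cong K_X \otimes L \otimes \mathcal{I}(h)$, under which the vanishing transports directly to give $H^{q}(X, K_X \otimes L \otimes \mathcal{I}(h)) = 0$, as claimed.

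The only point requiring care --- and it is not a genuine obstacle once Theorem \ref{thm2} is in hand --- is verifying that the trivial bundle legitimately satisfies the curvature hypothesis of Corollary \ref{thm3}. This is immediate from the vanishing of the curvature of the flat metric, so no new analytic input (no further $L^2$ estimate, equisingular approximation, or harmonic-space argument) is needed beyond what already underlies Theorem \ref{thm2}; the entire content of the Nadel vanishing theorem is thereby subsumed as the rank-one, zero-curvature case of our second main vanishing theorem.
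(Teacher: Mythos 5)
Your proposal is correct and follows exactly the paper's own route: the paper likewise deduces Corollary \ref{Nadel} from Corollary \ref{thm3} by taking $E$ to be the trivial line bundle and $\tau=0$. Your write-up merely makes explicit the verification (flat metric, zero curvature, $\tau=0<\delta$) that the paper leaves as ``obvious.''
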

\begin{proof}
It is obvious that Corollary \ref{Nadel.} follows from Corollary \ref{thm3.} if we take the holomorphic vector bundle $E$ to be the trivial line bundle on $X$ and take the constant $\tau$ to be zero in Corollary \ref{thm3.}.
\end{proof}


\end{document}